\documentclass[reqno,11pt]{amsart}
\usepackage{amsmath,amssymb,amsfonts,amsthm, amscd,indentfirst}
\usepackage{amsmath,latexsym,amssymb,amsmath,
	amscd,amsthm,amsxtra,mathrsfs}
 
\usepackage[top=3cm, bottom=3.5cm, right=2.5cm, left=2.5cm]{geometry}

\usepackage[hyphens]{url}
\usepackage{hyperref}
\usepackage{bookmark}
\usepackage{amsmath,thmtools,mathtools}
\allowdisplaybreaks 
\newcommand{\CP}{\mathbb{CP}}

\mathtoolsset{showonlyrefs=true}
\makeatletter
\def\@dotsep{10000} 
\makeatother

\newcommand{\C}{\mathbb{C}}

\usepackage[msc-links]{amsrefs} 

\newcounter{mparcnt}

\usepackage{fancyhdr}
\usepackage{esint}
\usepackage{enumerate}
\usepackage{xcolor}

\usepackage{pictexwd,dcpic}
\usepackage{graphicx}
\usepackage{caption}

\usepackage{etoolbox}

\usepackage{slashed}

\usepackage{epsfig,here}
\usepackage{subfigure,here}

\theoremstyle{plain}
\newtheorem{theorem}{Theorem}[section]
\newtheorem{lemma}[theorem]{Lemma}
\newtheorem{proposition}[theorem]{Proposition}
\newtheorem{corollary}[theorem]{Corollary}

\theoremstyle{definition}

\theoremstyle{remark}
\newtheorem{remark}[theorem]{Remark}

\newcommand{\abs}[1]{\lvert#1\rvert}

\newcommand{\norm}[1]{\lVert#1\rVert}

\newcommand{\rd}{{\rm d}}

\newcommand{\rdV}{{\rm dV}}

\newcommand{\rid}{{\rm id}}
\newcommand{\rtr}{{\rm tr}}

\newcommand{\ip}{\lrcorner\,}
\newcommand{\w}{\wedge}
\newcommand{\p}{\partial}

\def\<{\langle}
\def\>{\rangle}
\def\S{\mathbb{S}}
\def\R{\mathbb{R}}
\def\C{\mathbb{C}}

\newcommand{\eq}[1]{\begin{equation}\allowdisplaybreaks\begin{alignedat}{2} #1 \end{alignedat}\end{equation}}

\numberwithin{equation} {section}

\begin{document}
\keywords{Harmonic maps, Morse index, Complex projective space, Hopf fibration, Holomorphic maps, Fubini–Study metric, Nullity, Riemannian isometries, Sphere-to-projective-space maps, Stability of critical points}

	
\title[Harmonic maps from $\S^{2n+1}$ into $\mathbb{CP}^n$ with least Morse index]{
Harmonic maps from $\S^{2n+1}$ into $\mathbb{CP}^n$ with least Morse index
}

\date{\today}


\author{Qun Chen}
\address{ Wuhan University, School of Mathematics and Statistics, 430072 Wuhan, China}
\email{qunchen@whu.edu.cn}

\author{Guofang Wang}
\address{Albert-Ludwigs-Universit\"at Freiburg,
Mathematisches Institut,
Ernst-Zermelo-Str. 1,
D-79104 Freiburg, Germany}
\email{guofang.wang@math.uni-freiburg.de}

\author{Mingwei Zhang}
\address{ Wuhan University, School of Mathematics and Statistics, 430072 Wuhan, China and 
Albert-Ludwigs-Universit\"at Freiburg,
Mathematisches Institut,
Ernst-Zermelo-Str. 1,
D-79104 Freiburg, Germany}
\email{zhangmwmath@whu.edu.cn}

\begin{abstract} 

Let $u:\S^{2n+1}\to\mathbb{CP}^n$ be a smooth nonconstant harmonic map. 
We prove that its Morse index is equal to $ 2n+2$ if and only if
\eq{
    u = h \circ \pi \circ \Xi,
}
where $\Xi:\S^{2n+1}\to\S^{2n+1}$ is an isometric transformation, $\pi:\S^{2n+1}\to\mathbb{CP}^n$ is the Hopf map and $h:\mathbb{CP}^n\to\mathbb{CP}^n$ is a holomorphic map with degree one. When $n=1$, it 
refines  a classical result of Urakawa  \cite{Urakawa87}    and 
a recent result of Rivi\`{e}re \cite{Riviere23}. Moreover, we  prove that $h\circ\pi\circ \Xi$ has nullity $3n^2+5n$.


\end{abstract}

\subjclass[2020]{Primary 53C43; Secondary 58E20, 58J50}

\keywords{Harmonic map, Hopf map, least index, rigidity}

\maketitle
\setcounter{footnote}{0}


\tableofcontents
\setcounter{footnote}{0}

\section{Introduction}

The Morse index and stability are central concepts  in calculus of variations and differential geometry.
In this paper, we study the Morse index and nullity of the Hopf map (or Hopf fibration) $\pi:\S^{2n+1}\to\mathbb{CP}^n$, which is a classical example of a harmonic map.

In the case $n=1$, i.e., the case $\S^3\to\S^2$,  Urakawa  \cite{Urakawa87} proved a classical result.

\medskip 

\noindent{\bf Theorem A} (Urakawa  \cite{Urakawa87}).
 {\it The Hopf map $\pi: \S^3\to\S^2$ has Morse index $4$ and nullity $8$.}

\medskip

Recently Rivi\`{e}re \cite{Riviere23} proved a  rigidity result, which is analogous to Urbano's index characterization for minimal surfaces in $\S^3$ \cite{Urbano90}.
\medskip

\noindent{\bf Theorem B} (Rivi\`{e}re \cite{Riviere23})
{\it Let $u:\S^3\to\S^2$ be a smooth non-constant harmonic map with ${\rm ind}(u)= 4$. Then $u$ factors, up to an isometry of $\S^3$, as the Hopf map $\pi$ followed by a holomorphic self-map $h$ of $\S^2$.}

The main motivation of his work is to use the ideas in 
the proof of
the Willmore Conjecture by  Marques--Neves \cite{MarquesNevesWillmore2014} to show the following conjecture:

\smallskip

\noindent {\bf Conjecture} (Rivi\`{e}re \cite{Riviere98}) \textit{The minimizers of the 3-energy among homotopically non zero maps from $\S^3$ into $\S^2$ are exactly given by the composition of the Hopf maps $\pi$ with conformal transformations and isometries of $\S^3$.}

\smallskip 

This conjecture shares many similarities with the Willmore conjecture. Recall that in the proof of the latter one, the index characterization of F. Urbano \cite{Urbano90} plays a key role: {\it any compact orientable non-totally geodesic minimal surface with Morse index $5$ in $\S^3$ must be the Clifford torus}. 
The conjecture of   Rivi\`{e}re  was proved in the recent work of the second and the third authors in \cite{WZ26curl} with a complete different method. Nevertheless, the approach proposed by him  has a significant interest in geometric analysis. 

Therefore, we first  refine Theorem B by showing that the holomorphic map $h$ must have degree one. Then it arises the following question: 

\smallskip

{\it For any holomorphic self-map $h$ of $\S^2$ of degree one, does its composition $h\circ \pi$ with the Hopf map $\pi$ have Morse index $4$?} 

\smallskip

To the best of our knowledge, it has been open until the present paper. This question was addressed in \cite{Kaltefleiter23} under a more restrictive assumption. We will give this question an affirmative answer. Finally we generalize these two results to higher dimensions.

We now state our main result.

\begin{theorem}\label{main_thm}
Let $u:\S^{2n+1}\to\mathbb{CP}^n$ be a smooth non-constant harmonic map.  The Morse index ${\rm ind}(u)=2n+2$ if and only if 
\eq{\label{eq0}
    u = h \circ \pi \circ \Xi,
}
where $\Xi:\S^{2n+1}\to\S^{2n+1}$ is an isometric transformation, $\pi:\S^{2n+1}\to\mathbb{CP}^n$ is the Hopf map and $h:\mathbb{CP}^n\to\mathbb{CP}^n$ is a holomorphic map with degree one.
\end{theorem}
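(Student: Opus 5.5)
The plan is to prove the two directions separately. The common first step is a universal lower bound $\mathrm{ind}(u)\ge 2n+2$ for every nonconstant harmonic map $u:\S^{2n+1}\to\CP^n$; the equality case is then analysed.

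\textbf{Lower bound.} Let $x_1,\dots,x_{2n+2}$ be the coordinate functions of $\R^{2n+2}$ restricted to $\S^{2n+1}$. These are the first eigenfunctions, and their gradients $\nabla x_i$ are conformal vector fields. For $f=\sum a_i x_i$ I will use the test sections $V_f=du(\nabla f)$, in the spirit of Xin and El Soufi. The aim is a formula of the type
\[
I(V_f,V_f)=-(m-2)\int_{\S^{m}}\abs{du(\nabla f)}^2 ,\qquad m=2n+1 ,
\]
possibly with an extra term that can be handled. This is a direct Bochner-type computation using harmonicity of $u$, the conformal Killing equation for $\nabla f$, and $\nabla^2 f=-f\,g$.

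Suppose some $f\ne 0$ had $du(\nabla f)\equiv 0$. Then $u$ would be invariant under the conformal flow of $\nabla f$, which pushes everything to a pole. This forces $u$ to be constant, by energy considerations together with unique continuation. Hence $f\mapsto V_f$ is injective, the form $I$ is negative definite on a $(2n+2)$-dimensional space, and $\mathrm{ind}(u)\ge 2n+2$.

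\textbf{Rigidity (``only if'').} Assume $\mathrm{ind}(u)=2n+2$. Then $\mathrm{span}\{V_f\}$ is a maximal negative subspace, so $I(W,W)\ge 0$ for every $W$ that is $I$-orthogonal to it. I will use the Kähler structure of $\CP^n$ to produce a second family of test sections. The candidates are $J\,du(\nabla f)$, and the projections to $u^*T\CP^n$ of the holomorphic vector fields on $\CP^n$ coming from $\mathfrak{gl}(n+1,\C)$, which are indexed by matrices in $\C^{n+1}\otimes\overline{\C^{n+1}}$. Following Urbano's strategy, the steps are:
\begin{enumerate}
\item Take a suitable combination of this family that is $I$-orthogonal to the $V_f$.
\item Sum its second variation over an orthonormal basis of the family.
\item Obtain an identity of the form $\sum I(W_\alpha,W_\alpha)=-c\int\abs{\bar\partial_H u}^2$, where $\bar\partial_H u$ measures the failure of $du$ to intertwine the horizontal CR structure of the Hopf fibration with $J$, up to an isometry $\Xi$.
\end{enumerate}
Nonnegativity then forces $\bar\partial_H u=0$ and $du(\xi)=0$ along the Hopf fibres. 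Hence $u$ descends through $\pi\circ\Xi$ to a holomorphic $h:\CP^n\to\CP^n$.

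To show $\deg h=1$, I will argue that when $\deg h=d\ge 2$ the pulled-back holomorphic sections of $h^*T\CP^n$ give a strictly larger space. Its negative contribution is not exhausted by the $V_f$, so additional negative directions appear and $\mathrm{ind}(u)>2n+2$. Concretely, I expect the test sections built from $h^*\mathcal O(1)$-valued polynomials to exhibit extra negativity proportional to $d-1$.

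\textbf{Converse (``if'').} A degree-one holomorphic $h$ is a projective linear map $[z]\mapsto[Az]$ with $A\in GL(n+1,\C)$. Therefore $h\circ\pi\circ\Xi=\pi\circ\phi_A\circ\Xi$, where $\phi_A(x)=Ax/\abs{Ax}$. By the lower bound it suffices to show $\mathrm{ind}\le 2n+2$. For $A$ unitary, $h$ is an isometry, and the problem reduces to the Hopf map itself. There I would decompose $\pi^*T\CP^n$-sections into $U(n+1)$-isotypic pieces using the bigraded spherical harmonics $H^{p,q}$, and check that only the $(1,0)\oplus(0,1)$ piece yields negative eigenvalues, with total multiplicity $2n+2$.

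\textbf{Main obstacle.} The hard case is non-unitary $A$. There $h$ is not an isometry and the Jacobi operator loses its symmetry. My first attempt is a continuity/deformation argument along the path $A_t=P^t$, where $A=UP$ is the polar decomposition. Each $u_t$ is harmonic. The plan is to show that the nullity stays constant along the path, equal to the dimension of the orbit of the conformal and holomorphic symmetries, which should give $3n^2+5n$. Then no eigenvalue crosses zero, and the index stays $2n+2$. Proving that all Jacobi fields of $u_t$ are generated by these symmetries is the step I expect to be the real difficulty.
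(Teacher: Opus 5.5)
Your lower bound is El Soufi's argument, as in the paper. There are, however, genuine gaps in the rigidity step and in the converse for non-unitary $A$.

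\emph{Rigidity.} Your setup is fine. Since ${\rm ind}(u)=2n+2$ and $L\,\rd u(X_i)=-(2n-1)\rd u(X_i)$, the only negative eigenvalue of $L$ is $-(2n-1)$, with eigenspace ${\rm span}\{\rd u(X_i)\}$, so $L+2n-1\ge0$ on all sections. Step 3, however, cannot work as stated, for the following reason. Either your test family is ${\rm O}(2n+2)$-equivariant, as $\{J\rd u(\nabla f)\}$ and $\{Y\circ u\}$ are. Then the summed defect is ${\rm O}(2n+2)$-invariant and cannot be tied to one particular Hopf fibration. Or the family depends on a choice of complex structure on $\R^{2n+2}$, which your argument has no way to make. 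Writing ``up to an isometry $\Xi$'' is circular, since $\Xi$ is exactly what must be read off from $u$. What the computation actually gives, using the curvature of $\CP^n$ and $\sum_iX_i^\flat\otimes X_i^\flat=g$, is
\[
\tfrac14\sum_{i=1}^{2n+2}\int\<(L+2n-1)J\rd u(X_i),J\rd u(X_i)\>=-\tfrac12\int\norm{[(\rd u)(\rd u)^*,J]}_{\mathrm{HS}}^2 ,
\]
hence $[(\rd u)(\rd u)^*,J]=0$. For $n\ge2$ this is strictly weaker than horizontal weak conformality (Appendix~\ref{appendix_B}), and it does not by itself single out a Reeb field. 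So ``nonnegativity kills the defect'' stops short.

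The missing idea is to exploit the equality case. Each $J\rd u(X_i)$ then lies in $\ker(L+2n-1)={\rm span}\{\rd u(X_j)\}$, so $J\rd u(X_i)=A_{ij}\rd u(X_j)$ for a constant matrix $A$. Lemma~\ref{lem2.1} gives $A^2=-I$. The commutation relation, together with a rank argument on the dense set where ${\rm rank}\,\rd u=2n$, gives $A^*=-A$. This orthogonal complex structure $A^*$ on $\R^{2n+2}$ is what produces $\Xi$. Finally, $\rd u((A^*x)^\top)=0$, which follows from $B_xA^*=JB_x$ and $B_x(x)=0$, gives $\rd u(\xi)=0$ and the descent to a holomorphic $h$.

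\emph{Degree and converse.} For $d\ge2$ your polynomial test sections are the right objects. You still need the identity that makes them computable: for $u=h\circ\pi$ the complexified index form has no curvature term,
\[
Q(V,\bar V)=\int\big(2\abs{\bar\p_bV}^2+\abs{\nabla_\xi V}^2-2ni\<\nabla_\xi V,\bar V\>\big).
\]
So $V_{\mathbf b}$ with $\mathbf b\in(\mathcal P_{d-1})^{n+1}$, which lies in $\ker\bar\p_b$ with weight $-1$, has $Q=-(2n-1)\int\abs{V}^2$. The negativity does not grow with $d$; what grows is the dimension $2(n+1)\binom{n+d-1}{n}>2n+2$.

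For the converse, your deformation along $A_t=P^t$ requires showing that the nullity of every non-isometric $h_t\circ\pi$ equals the symmetry count $3n^2+5n$. That needs at least the full spectral analysis of the index problem; in the paper the nullity is derived from it, plus an equality-case study at $m=2$. You give no method for this.

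The paper avoids any symmetry of the Jacobi operator. A section of weight $-m$ descends to $s\in\Gamma(h^*T^{1,0}\CP^n\otimes\mathcal O(-m))$ with $Q_m(s)=2\int\abs{\bar\p s}^2-m(2n-m)\int\abs{s}^2$. The cases then go as follows.
\begin{itemize}
\item $m\le0$ is trivial.
\item $m\ge2$ follows from averaging over the hyperplanes $\{\ell(Az)=0\}$ and applying Bochner--Kodaira on $\CP^{n-1}$.
\item $m=1$: the Euler sequence gives $\dim_\C H^0=n+1$, i.e.\ exactly $2n+2$ real negative directions. On $(H^0)^\perp$ one has $\int\abs{\bar\p s}^2\ge2n\int\abs{s}^2$, by Bochner--Kodaira--Nakano on $(n,1)$-forms with values in $h^*T^{1,0}\CP^n\otimes\mathcal O(n)$.
\end{itemize}
That last bundle has curvature $\ge 2n\,\omega$ for every $A\in{\rm GL}(n+1,\C)$, because Nakano semipositivity of $T^{1,0}\CP^n$ is preserved under holomorphic pullback. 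This is the ingredient that handles non-unitary $A$ uniformly, with no deformation needed.
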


There have been a lot of results on stability and Morse index of harmonic maps. Here we just mention closely related ones. 
For harmonic maps from $\S^m$, Xin \cite{Xin80} proved that every non-constant harmonic map with $m\ge 3$ is unstable, and El Soufi \cite{ElSoufi95} strengthened it by showing that its index is at least $m+1$ (see also \cite{Ni23}). In contrast, for maps from $\S^2$ there exist many stable examples; see Siu--Yau \cite{Siu-Yau80}.
For harmonic maps into $\mathbb{CP}^n$, Chen \cite{Chen96} showed that the stability of a harmonic map implies
\eq{\label{eq_qhwc}
    [ (\rd u) (\rd u)^*,J]=0,
}
which Loubeau \cite{Loubeau97} called \emph{pseudo horizontal weak conformality}. Here $J$ denotes the standard complex structure on $\mathbb{CP}^n$. When $n=1$ (so $\mathbb{CP}^1=\S^2$), \eqref{eq_qhwc} is equivalent to horizontal weak conformality; in particular, such a harmonic map is a harmonic morphism (see Fuglede \cite{Fuglede78} or Ishihara \cite{Ishihara79}), and factors as described above (see Baird--Wood \cite{Baird-Wood88}). For further discussions we refer to \cite{Baird-Wood03}. Related rigidity results for stable harmonic maps into compact Hermitian symmetric spaces appear in \cites{Burns-deB88, BBdBR89, Urakawa87}.

From  now on, we focus on maps from $\S^{2n+1}$ to $\CP^n$. 
In order to understand well our proof's strategy for Theorem \ref{main_thm}, we divide it into the following three Theorems.

First, we prove a generalization of Theorem B.

\begin{theorem}\label{Theorem_R}
Let $u:\S^{2n+1}\to\mathbb{CP}^n$ be a smooth non-constant harmonic map.
 If the Morse index ${\rm ind}(u)=2n+2$, then
\eq{
    u = h \circ \pi \circ \Xi,
}
where $\Xi:\S^{2n+1}\to\S^{2n+1}$ is an isometric transformation, $\pi:\S^{2n+1}\to\mathbb{CP}^n$ is the Hopf map, and $h:\mathbb{CP}^n\to\mathbb{CP}^n$ is a holomorphic map.
Equivalently, factorization can be expressed by the following commutative diagram:
\eq{
\begin{CD}
\S^{2n+1} @<<\Xi< \S^{2n+1} \\
@VV \pi V @VV u V \\
\mathbb{CP}^n @> h >> \mathbb{CP}^n
\end{CD}
}
\end{theorem}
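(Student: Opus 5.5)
We argue along the lines of El Soufi's lower bound and Chen's stability argument, and show that the extremal case ${\rm ind}(u)=2n+2$ forces enough equality to pin down $u$.

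\textbf{Step 1: test variations.} For a harmonic map $u:\S^{2n+1}\to\CP^n$ and constant vectors $a\in\R^{2n+2}$, let $a^\top$ be the tangential projection of $a$ onto $T\S^{2n+1}$, a conformal vector field. Consider the variations $V_a=\rd u(a^\top)$ together with the twisted variations $J V_a=J\,\rd u(a^\top)$, using the complex structure $J$ of $\CP^n$. This gives a real $2(2n+2)$-dimensional family. Averaging the second variation over $a$ (trace over an orthonormal basis of $\R^{2n+2}$) and using the Bochner formula, harmonicity, and the curvature of the Fubini--Study metric, we aim at an identity of the form
\begin{equation}
\sum_a \bigl(I(V_a,V_a)+I(JV_a,JV_a)\bigr)= -c_n\int_{\S^{2n+1}}|\rd u|^2 \;-\; C\int_{\S^{2n+1}}\bigl|[(\rd u)(\rd u)^*,J]\bigr|^2 ,
\end{equation}
with $c_n>0$. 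The commutator term is the same quantity that appears in Chen's proof of \eqref{eq_qhwc}.

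\textbf{Step 2: use the index bound.} The index form is negative on average over a $(4n+4)$-dimensional space. The key linear-algebra step is to show that it is negative on a subspace of dimension larger than $2n+2$ unless every term in the commutator is extremal. Since ${\rm ind}(u)=2n+2$, the map $a\mapsto V_a$ contributes at most $2n+2$ negative directions, and $J V_a$ must lie in the non-positive space modulo the span of the $V_a$. A careful version (min--max applied to the $2(2n+2)$-dimensional family, as in Urbano's and Rivi\`ere's arguments) should force
\begin{equation}
[(\rd u)(\rd u)^*,J]=0,
\end{equation}
together with the conclusion that each $V_a+JV_b$-type combination is a Jacobi field where needed. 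So $u$ is pseudo horizontally weakly conformal.

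\textbf{Step 3: from pseudo-HWC to the factorization.} Combine the commutation relation with harmonicity, as in Loubeau's and Baird--Wood's analysis. Locally, $(\rd u)(\rd u)^*$ commutes with $J$, which makes $\ker(\rd u)^{\perp}$ carry a complex structure compatible with $u$. We then want to show that the fibres of $u$ are unions of Hopf circles after an isometry. Here the equality case from Step 2 gives extra information: the Jacobi fields $JV_a-V_{\mathbf{i}a}$ (where $\mathbf{i}$ is a complex structure on $\R^{2n+2}$) vanish. This implies $\rd u(\xi)=0$ for the Hopf vertical field $\xi=\mathbf{i}x$ associated to a suitable orthogonal complex structure, which is chosen via $\Xi$. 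Hence $u$ is $S^1$-invariant and descends to $h:\CP^n\to\CP^n$. The pseudo-HWC condition on horizontal spaces then becomes $[(\rd h)(\rd h)^*,J]=0$ together with a sign/rank condition, and harmonicity of $h$ on $\CP^n$ plus the energy identity should give $\bar\partial h=0$. For the latter we would use the $\partial$/$\bar\partial$ energy difference, a topological quantity, and the vanishing coming from Step 2.

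\textbf{Main obstacle.} The hardest step is Step 2's equality analysis, specifically extracting the vertical direction $\xi$ (that is, identifying the complex structure $\mathbf{i}$ and the isometry $\Xi$) from the fact that the index is exactly $2n+2$. I would first try showing that the quadratic form $Q(a,b)=I(V_a+JV_b,V_a+JV_b)$ on $\R^{2n+2}\oplus\R^{2n+2}$ has trace strictly negative unless it has a $(2n+2)$-dimensional null space. I would then identify that null space with a graph $\{(a,\mathbf{i}a)\}$, with $\mathbf{i}$ orthogonal and $\mathbf{i}^2=-1$.
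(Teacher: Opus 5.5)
Your skeleton is right up to the intertwining relation $J\,\rd u(a^\top)=\rd u((\mathbf{i}a)^\top)$. Step~2, however, does not yet state its actual mechanism, and the step you flag as the main obstacle is a genuine gap that your proposed fix cannot close.

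On Step~2: the trace of your form $Q(a,b)$ is always $-2(2n-1)\int\abs{\rd u}^2-2\int\norm{[(\rd u)(\rd u)^*,J]}_{\rm HS}^2<0$, so nothing follows from its sign. What works is the following.
\begin{itemize}
\item The $2n+2$ independent fields $V_a$ satisfy $LV_a=-(2n-1)V_a$. When ${\rm ind}(u)=2n+2$ they exhaust the negative spectrum, so $L+2n-1\ge 0$.
\item Testing this on $JV_a$ and summing gives $0\le\sum_a\int\langle (L+2n-1)JV_a,JV_a\rangle=-2\int\norm{[(\rd u)(\rd u)^*,J]}_{\rm HS}^2$. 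This forces the commutator and every $(L+2n-1)JV_a$ to vanish, so $JV_a\in{\rm span}\{V_b\}$. These are eigenfields, not Jacobi fields.
\item $\mathbf{i}^2=-I$ then follows from El Soufi's injectivity of $a\mapsto V_a$.
\end{itemize}
Holomorphicity of the descended map is immediate from the relation. So the $\p/\bar\p$-energy argument in Step~3 is unnecessary, and the commutation relation alone could not give it anyway, since it also holds for antiholomorphic maps.

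The gap is orthogonality of $\mathbf{i}$. Putting $a=x$ gives $\rd u_x((\mathbf{i}x)^\top)=J\rd u_x(x^\top)=0$ for any complex structure $\mathbf{i}$. So $u$ factors through the fibration of $\S^{2n+1}$ by great circles in $\mathbf{i}$-complex lines. This fibration is congruent to the Hopf fibration (so that $\Xi$ is an isometry) only when $\mathbf{i}$ is orthogonal, and your Step~3 simply assumes this.

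Reading $\mathbf{i}$ off the null space of an integrated form like $Q$ cannot supply orthogonality. Integrated quantities only see the Gram form $G(a,b)=\int\langle V_a,V_b\rangle$. For $G$, $\mathbf{i}$ is automatically orthogonal and skew, because $J$ is an orthogonal complex structure. But $G$ need not be Euclidean; for instance it is not for $n=1$, $h([z_0:z_1])=[z_0:\lambda z_1]$ with $\lambda>0$, $\lambda\neq 1$, even though there $\mathbf{i}$ is the standard structure.

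The missing idea is pointwise:
\begin{itemize}
\item Set $B_x(a)\coloneqq\rd u_x(a^\top)$. The relation reads $JB_x=B_x\mathbf{i}$, and $B_xB_x^*=(\rd u)_x(\rd u)_x^*$.
\item The commutation relation then gives $B_x(\mathbf{i}+\mathbf{i}^T)B_x^*=-[(\rd u)_x(\rd u)_x^*,J]=0$. Hence the symmetric form $\mathbf{i}+\mathbf{i}^T$ vanishes on $(\ker B_x)^\perp$.
\item One needs $\ker B_x={\rm span}\{x,\mathbf{i}x\}$ on a dense set. The paper obtains this by first descending $u$ to a holomorphic, hence generically full-rank, map on $\mathbb{P}(\R^{2n+2},\mathbf{i})$.
\item Every $y$ lies in ${\rm span}\{x,\mathbf{i}x\}^\perp$ for some $x$.
\end{itemize}
Together these yield $\mathbf{i}^T=-\mathbf{i}$. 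After that, an element of ${\rm O}(2n+2)$ carries $\mathbf{i}$ to the standard structure.
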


For the proof for the general $n\ge1$, we adapt Rivi\`{e}re's test vectors to show that any non-constant harmonic map $u:\S^{2n+1}\to\mathbb{CP}^n$ with the least index $2n+2$ satisfies \eqref{eq_qhwc}. However, unlike the case $n=1$, pseudo horizontal weak conformality does not necessarily imply horizontal weak conformality (see Appendix~\ref{appendix_B} for an example), hence the previous argument does not work. This is the typical difficulty for generalizing results of 2-dimensional targets to higher dimensional ones.
We therefore observe an additional key consequence of the index condition:
\eq{ \label{eq:key_observation}
    {\rm span}_\R\, \{\rd u(X_j) \,|\, 1 \leq j \leq 2n+2 \} = {\rm span}_\R\, \{ J \rd u(X_j)\,|\, 1 \leq j \leq 2n+2\}\text{\footnotemark},
}
\footnotetext{It was also observed by S. Egor in {\it Harmonic maps from spheres with lowest possible index and their properties},
{\bf arXiv:2609.35589v1}.}
where $\{X_j\}_{j=1}^{2n+2}$ denotes the conformal Killing vector fields on $\S^{2n+1}$. This identity, together with \eqref{eq_qhwc}, allows us to construct an isometry $\Xi$ and a standard Sasakian structure on $\S^{2n+1}$, hence the Hopf fibration $\pi$ arises (see Appendix~\ref{appendix_A} for the definition). The map $u$ then descends to $h:\mathbb{CP}^n\to\mathbb{CP}^n$, giving
\eq{
    u = h \circ \pi \circ \Xi.
}
Finally we show that $h$ is holomorphic. Since the isometric transformation $\Xi$ does not change any geometric quantities, in the sequel, we may assume that $\Xi$ is the identity map.

Then we prove 

\begin{theorem}\label{thmC.1}
Let $u=h\circ\pi:\S^{2n+1}\to\mathbb{CP}^n$, where $\pi:\S^{2n+1}\to\mathbb{CP}^n$ is the Hopf map and $h:\mathbb{CP}^n\to\mathbb{CP}^n$ is holomorphic of algebraic degree $d\ge 1 $. Then
\eq{
    {\rm ind}(u) \ge 2(n+1)\left\{ \binom{n+d-1}{n} + \binom{n+d-2}{n} + \cdots + \binom{n+d-(2n-1)}{n} \right\}
}
and
\eq{
    {\rm nul}(u) \ge 2(n+1)\left\{ \binom{n+d}{n} + \binom{n+d-2n}{n} \right\} - 2.
}
We use the convention $\binom{m}{n}=0$ for $m<n$.
\end{theorem}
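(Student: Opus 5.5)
We construct explicit test variations from the $\mathbb{C}^{n+1}$ structure and compute their second variation. Write $h=[P_0:\dots:P_n]$, with $P_k$ homogeneous polynomials of degree $d$ with no common zero. Then $u=h\circ\pi$ lifts to the $\S^{2n+1}$-equivariant map
\eq{
F=(P_0,\dots,P_n)/|P|:\S^{2n+1}\to\S^{2n+1}\subset\C^{n+1}.
}
A variation of $u$ is a section $V$ of $u^*T\mathbb{CP}^n$, which we identify with the horizontal part of a $\C^{n+1}$-valued function $W$: $V=W-\<W,F\>F$ (Hermitian projection). Following Rivi\`ere's test vectors used for Theorem \ref{Theorem_R}, we take
\eq{
V_{A}=A\,\Phi(z)-\<A\Phi,F\>F,
}
where $A$ ranges over $\C^{n+1}\cong\R^{2n+2}$ directions and $\Phi$ ranges over functions on $\S^{2n+1}$ built from $z,\bar z$ and $F$. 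The point is that the factor $2(n+1)$ in both bounds should come from the $2(n+1)$ real directions of the constant vectors $e_k, ie_k$, multiplied by the dimension of a space of spherical harmonics.

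The key steps are as follows.

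\textbf{Step 1.} Write the second variation $I(V,V)=\int|\nabla V|^2-\<R(V,\rd u)\rd u,V\>$ for $\mathbb{CP}^n$ with its constant holomorphic sectional curvature. Using $\Delta F$ and the equivariance (the Hopf vertical field $\xi$ acts on $F$ as multiplication by $id$), reduce $I(V_A,V_A)$ to an integral involving $\Phi$, its Laplacian and its weight under the $S^1$-action.

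\textbf{Step 2.} Decompose functions on $\S^{2n+1}$ into bigraded harmonics $\mathcal H^{p,q}$, the harmonic polynomials of bidegree $(p,q)$ in $(z,\bar z)$. On $\mathcal H^{p,q}$ one has $\Delta=-(p+q)(p+q+2n)$ and $\xi=i(p-q)$. Choose test sections of the form $V=\bar\phi\,F-\<\bar\phi F,F\>F$ composed with the constant directions, with $\phi$ of bidegree $(p,0)$ twisted by $P$, so that $\Phi F$ has a fixed weight.

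\textbf{Step 3.} Show that $I<0$ on the span of the sections with weight parameter $d-1,d-2,\dots,d-(2n-1)$. Each such weight contributes $\dim\{\text{homogeneous polynomials of degree } d-j\}=\binom{n+d-j}{n}$ holomorphic functions, times $2(n+1)$ real directions $A$. For the nullity, identify the Jacobi fields of weights $d$ and $d-2n$, which are exactly the borderline cases where the quadratic form in $j$ vanishes. This gives $2(n+1)\{\binom{n+d}{n}+\binom{n+d-2n}{n}\}$, minus $2$ for the linear dependencies: the trivial relations $V_A=0$ when $A\Phi\parallel F$, coming from $A=F$-type combinations, namely real and imaginary parts.

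\textbf{Step 4.} Check that the constructed sections are linearly independent and that the form is diagonal across different weights. This holds because different $S^1$-weights are $L^2$-orthogonal and the Jacobi operator commutes with $\xi$, since $u$ is $S^1$-invariant and $\xi$ is Killing.

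The main obstacle is Step 1/3. The projection term $\<A\Phi,F\>F$ destroys the clean eigenfunction structure, because $F=P/|P|$ is not a harmonic polynomial. My first approach is the Rivi\`ere-style trick: average the quadratic form over $A$ in the unitary group. Averaging over $A$ turns $\sum_A|\<A\Phi,F\>|^2$ into $|\Phi|^2/(n+1)$-type constants, which eliminates the dependence on $P$. The resulting sum of second variations should then be a negative multiple of $\int|\Phi|^2$ exactly when the weight is $d-j$ with $1\le j\le 2n-1$, and zero for $j=0,2n$. I expect the range $j\le 2n-1$ to come from an expression like $-j(2n-j)$ appearing after using $\Delta$ on $\mathcal H^{p,q}$ and the eigenvalue of $|\rd u|^2$ on the pullback bundle. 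A trace argument gives only a sum, not negativity on the whole span. To fix this, I would use the $U(n+1)$-equivariance to show that the form restricted to each isotypic block is a scalar multiple of the identity, so the trace sign determines definiteness.
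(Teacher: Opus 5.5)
Your test space is the right one. Take $\Phi=\phi/|P|$ with $\phi\in\mathcal P_{d-j}$, which is presumably what you mean by ``twisted by $P$''. Then the span of the sections $A\Phi-\langle A\Phi,F\rangle F$ is, under $T^{1,0}\CP^n\cong T\CP^n$, exactly the paper's space of $V_{\mathbf b}=\frac{d}{dt}\big|_{t=0}[P+t\mathbf b]$ with $\mathbf b\in\C^{n+1}\otimes\mathcal P_{d-j}$. Your guess $-j(2n-j)$ is also the eigenvalue that actually appears. (As written, the Step 2 formula $\bar\phi F-\langle\bar\phi F,F\rangle F$ vanishes identically because $|F|=1$.)

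The genuine gap is how you get definiteness. Averaging over $A$ only controls the trace $\sum_A I(V_{A\Phi},V_{A\Phi})$ for each fixed $\Phi$. A negative trace gives one negative direction per $\Phi$, not a negative definite subspace of real dimension $2(n+1)\binom{n+d-j}{n}$. Your repair via $U(n+1)$-equivariance and isotypic blocks is not available, for two reasons:
\begin{enumerate}
\item $u=h\circ\pi$ is not equivariant for general $h$. For $d\ge 2$ it never is, since an equivariant $h$ would have constant rank and hence be a covering of the simply connected $\CP^n$. For $d=1$ it is only when $h=[Az]$ with $A\in\C^*\,\mathrm{U}(n+1)$.
\item Letting $\mathrm{U}(n+1)$ act only on the constant vector does not preserve the index form of $u$. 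Indeed $V_{gA\Phi}$ is $g$ applied to a variation field of the different map $g^{-1}\circ u$.
\end{enumerate}
The nullity claim has the same defect in a sharper form. $I(V,V)=0$, let alone a vanishing trace, does not make $V$ a Jacobi field. You need $LV=0$, or semi-definiteness of the form on an $L$-invariant summand containing $V$.

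The missing idea is an identity that holds for each transversely holomorphic variation individually, so no averaging is needed. The paper complexifies, writing $w=V+\bar V$ with $V\in\Gamma(u^*T^{1,0}\CP^n)$, and integrates by parts along the horizontal frame $Z_\alpha,\bar Z_\alpha$ of the Sasakian structure. The commutator of $\nabla_{Z_\alpha}$ and $\nabla_{\bar Z_\alpha}$ reproduces the $\CP^n$ curvature term of the second variation. This uses that $\mathrm du$ intertwines the transverse complex structure of the Hopf fibration with $J$, i.e.\ holomorphicity of $h$. It also produces a vertical term coming from $\eta([Z_\alpha,\bar Z_\alpha])=-2i$. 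The result is
\[
Q(V,\bar V)=\int_{\S^{2n+1}}\Big(2|\bar\partial_bV|^2+|\nabla_\xi V|^2-2ni\langle\nabla_\xi V,\bar V\rangle\Big),
\]
with no curvature term left. For $\mathbf b\in\C^{n+1}\otimes\mathcal P_k$ one has $\bar\partial_bV_{\mathbf b}=0$ and $\nabla_\xi V_{\mathbf b}=i(k-d)V_{\mathbf b}$. Hence $Q(V_{\mathbf b},\bar V_{\mathbf b})=(k-d)(k-d+2n)\|V_{\mathbf b}\|^2$ for every single $\mathbf b$, which gives negative definiteness whenever $1\le d-k\le 2n-1$. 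On the full weight spaces $m=0$ and $m=-2n$ the form reduces to $2\|\bar\partial_bV\|^2\ge 0$. So the isotropic sections with $k=d$ and $k=d-2n$ lie in its radical and are genuine Jacobi fields.

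Finally, the count needs the injectivity you only assert in Step 4; orthogonality of distinct weights says nothing inside one weight. The argument is: $V_{\mathbf b}\equiv 0$ forces $\mathbf b=\lambda P$ with $\lambda$ rational, and coprimality of $P_0,\dots,P_n$ gives $\lambda\in\mathcal P_{k-d}$. So $\mathbf b\mapsto V_{\mathbf b}$ is injective for $k<d$ and has kernel exactly $\C P$ for $k=d$, which is where the $-2$ comes from.
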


Theorem \ref{thmC.1} implies that the holomorphic map $h$ obtained in Theorem \ref{Theorem_R} must have degree one. In fact, it follows that

\begin{corollary}
If $d\ge 2$, then ${\rm ind}(u)>2n+2$. Equivalently, if ${\rm ind}(u)\le 2n+2$, then $h$ has degree one.
\end{corollary}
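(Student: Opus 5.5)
The corollary follows from the index bound in Theorem \ref{thmC.1}, applied to $u=h\circ\pi$ with $h$ of degree $d\ge 2$. We assume, as the paper does, that $\Xi$ is the identity.

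The plan is to bound the sum in braces from below by one explicit nonzero term. Every binomial coefficient there is nonnegative, with the convention $\binom{m}{n}=0$ for $m<n$. The first term is
\[
\binom{n+d-1}{n}\ \ge\ \binom{n+1}{n}=n+1 \qquad (d\ge 2).
\]
Hence
\[
{\rm ind}(u)\ \ge\ 2(n+1)(n+1)=2(n+1)^2 .
\]
Since $n\ge1$, we have $(n+1)^2>n+1$, and therefore $2(n+1)^2>2n+2$. This proves the first assertion.

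For a sanity check, take $d=1$. The first term is then $\binom{n}{n}=1$, and the remaining terms $\binom{n+1-k}{n}$ with $k\ge2$ all vanish. The bound becomes exactly $2n+2$, which is consistent with Theorem \ref{main_thm}.

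The "equivalently" part needs Theorem \ref{Theorem_R}. I read it, as intended, for maps already of the form $h\circ\pi\circ\Xi$ with $h$ holomorphic. If such a map satisfies ${\rm ind}(u)\le 2n+2$, then $d\ge 2$ is impossible by the first part. Also $d\ge 1$, because $u$ is non-constant, so $h$ is non-constant. Hence $d=1$.

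For a general non-constant harmonic $u$ with index $\le 2n+2$, I would combine this with El Soufi's lower bound ${\rm ind}(u)\ge m+1=2n+2$ for non-constant harmonic maps from $\S^m$, here with $m=2n+1$. This forces ${\rm ind}(u)=2n+2$. Theorem \ref{Theorem_R} then gives the factorization $u=h\circ\pi\circ\Xi$, and the argument above shows that $h$ has degree one.

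There is no real obstacle here; the only care needed is the bookkeeping of the binomial conventions.
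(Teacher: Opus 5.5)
Your proof is correct and takes the paper's route: the corollary is read off from the index bound of Theorem~\ref{thmC.1}, where the single term $\binom{n+d-1}{n}\ge n+1$ already gives ${\rm ind}(u)\ge 2(n+1)^2>2n+2$ for $d\ge 2$. Your handling of the ``equivalently'' part (the contrapositive together with $d\ge 1$, and, for a general $u$, El Soufi's bound combined with Theorem~\ref{Theorem_R}) matches how the paper assembles Theorem~\ref{main_thm}.
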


Finally, we prove that the Hopf map $\pi:\S^{2n+1} \to \mathbb{CP}^n$ has index $2n+2$, which is a generalization of Theorem A.\text{\footnotemark},  \footnotetext{We just learned that this result was proved independently and posted early  by D. Gutwein and T. Langlais, Theorem 1.4 in {\it Deformations of harmonic maps with conical singularities}, {\bf arXiv:2609.20588}.   In the paper, the authors mentioned  that an upcoming work of  L. Lara and \'E. Loubeau, {\it Harmonic maps between 
$\S^{2n+1} $ and $\mathbb{CP}^n$
with low Morse index,} proves ``the analogous
result for all complex Hopf fibrations $\S^{2n+1} \to \mathbb{CP}^n$, and in particular recovering the value $2(n+ 1)$ for
the Morse index''.
}
In fact, we prove more, which is new even for $n=1$ as mentioned above, to the best of our knowledge.

\begin{theorem}\label{thm_nullity}
Let $u=h\circ\pi$ be the composition of the Hopf map $\pi$ with any degree-one holomorphic map $h:\CP^n\to\CP^n$. We have
\eq{
    {\rm ind}(u)=2n+2,\qquad {\rm nul}(u)=3n^2+5n.
}
\end{theorem}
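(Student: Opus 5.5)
First reduce to $h=\rid$. A degree-one holomorphic self-map of $\CP^n$ is an automorphism, i.e. an element of $PGL(n+1,\C)$. It is therefore a projective transformation, not necessarily an isometry. Write $h=g\circ\phi$ with $g\in PU(n+1)$ and $\phi$ induced by a positive Hermitian matrix $A$. The isometry $g$ lifts to a unitary map of $\S^{2n+1}$ commuting with $\pi$, so it does not change index or nullity. We may thus assume $h$ is induced by $A=\mathrm{diag}(e^{t_0},\dots,e^{t_n})$. The map $u=h\circ\pi$ is then $\pi\circ\Phi_A$, where $\Phi_A(z)=Az/|Az|$ is a conformal diffeomorphism of $\S^{2n+1}$ with the round metric, and the energy is not conformally invariant in dimension $2n+1$.

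We do not try to use conformal invariance. Instead we compute the Jacobi operator directly using the structure of $u$. Because $h$ is holomorphic and $\pi$ is a Riemannian submersion with totally geodesic fibers, $u$ is harmonic. Its Jacobi operator acts on sections $V$ of $u^*T\CP^n$. Use the isomorphism $u^*T\CP^n\cong \pi^*h^*T\CP^n$, together with the fact that $h^*T\CP^n=h^*\mathcal O(1)\otimes \C^{n+1}/\mathcal O(-1)$ for $h$ of degree one. This lets us express sections as $\C^{n+1}$-valued functions on $\S^{2n+1}$ that are horizontal in the appropriate sense, and then expand in $U(1)$-weight spaces along the Hopf fibers, i.e. in bidegree $(p,q)$ spherical harmonics. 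In each weight space the operator becomes a Kodaira-type Laplacian on $\CP^n$ twisted by $\mathcal O(k)$ plus a fiber term $k^2$, plus curvature terms from $R^{\CP^n}(du,\cdot)du$.

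For $h=\rid$ (the Hopf map itself) everything is $U(n+1)$-equivariant, and the spectrum can be computed from the representation theory of $U(n+1)$ on spherical harmonics of bidegree $(p,q)$. First, the negative eigenvalues come only from the $2n+2$ real-dimensional space $V=\rd\pi(\nabla x_j)$ (gradients of linear functions), giving ${\rm ind}=2n+2$. This matches the lower bound of El Soufi, so only the upper bound needs work. Second, the kernel consists of:
\begin{enumerate}[(i)]
\item Killing fields of $\CP^n$ pulled back, of dimension $\dim PU(n+1)=n^2+2n$;
\item fields induced by the remaining conformal or holomorphic deformations, $\rd\pi\circ$(projected $\mathfrak{sl}$ directions), giving $n^2+2n$ more from $i\cdot$Killing $=$ gradient fields of holomorphic vector fields;
\item the extra solutions from $\rd\pi(X)$ for $X$ Killing on $\S^{2n+1}$ not coming from $\mathfrak u(n+1)$, i.e. $\mathfrak{so}(2n+2)/\mathfrak u(n+1)$, of dimension $n(n+1)-? $.
\end{enumerate}
Counting so that the total is $3n^2+5n$ means that, beyond $2(n^2+2n)$, there is an extra $n^2+n=\dim \mathfrak{so}(2n+2)/\mathfrak u(n+1)$. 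This matches the count and is consistent with Theorem A when $n=1$ ($8=3+3+2$). The proof of nullity then requires showing that no other eigenspace gives zero, via an explicit eigenvalue formula in terms of $(p,q)$ and checking positivity for the other $(p,q)$.

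To pass from $h=\rid$ to general degree-one $h$, we use that index and nullity are upper semicontinuous and lower semicontinuous respectively along the connected family $A_t$. Upper bounds on the index transfer once we show that nullity is constant along the family. Constancy follows from Theorem \ref{thmC.1}, which gives ${\rm nul}\ge 2(n+1)\{\binom{n+1}{n}+0\}-2=2n^2+4n$, combined with an explicit construction of $3n^2+5n$ Jacobi fields for each $h$: pull back the Jacobi fields of $\pi$ via the Jacobi fields generated by the group actions, since $h\circ\pi$ lies in the orbit $PGL\cdot\pi$, which is a family of harmonic maps. Upper bounds on the nullity at general $A$ are the main obstacle. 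I would attempt to show that the Jacobi operator of $h\circ\pi$ is unitarily conjugate to that of $\pi$ modulo a nonnegative term vanishing on the known fields, or alternatively to prove that the family of harmonic maps $\{g\circ\pi\circ\Xi\}$ is integrable (all Jacobi fields are integrable). Then index and nullity are locally constant on this connected moduli space and equal their values at $\pi$.
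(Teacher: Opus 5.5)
\textbf{There is a genuine gap.} The steps that carry the content of the theorem are announced, not proved.

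Even for $h=\rid$, you state that the negative eigenspace is exactly $\{\rd\pi(X_i)\}$ and that the kernel is exactly the fields in (i)--(iii). But the eigenvalue formula in terms of the bidegree $(p,q)$, and the positivity check for all other $(p,q)$, are never carried out. So neither the index upper bound nor the nullity upper bound is established, even for the Hopf map. Listing $3n^2+5n$ Jacobi fields, or quoting Theorem~\ref{thmC.1}, gives only lower bounds.

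The passage to a non-isometric $h=[A\,\cdot\,]$ is the more serious problem. First, the semicontinuity is backwards: the index is lower semicontinuous and the nullity upper semicontinuous. Using that, the upper semicontinuity of ${\rm ind}+{\rm nul}$, and the orbit lower bound ${\rm nul}\ge 3n^2+5n$, you only settle a neighbourhood of $A=I$.

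To reach an arbitrary $A$, you need ${\rm nul}(h\circ\pi)\le 3n^2+5n$ at every point of the path, so that no eigenvalue crosses $0$. The set of $A$ where this holds is open, but nothing you say makes it closed, so ``constancy follows'' is not justified. Neither remedy you suggest supplies it:
\begin{itemize}
\item $PGL(n+1,\C)$ does not act isometrically on $\CP^n$, so no symmetry of the energy carries the Hessian at $\pi$ to the Hessian at $h\circ\pi$. Constant energy along a critical manifold does not give Morse--Bott nondegeneracy at each point.
\item Integrability of the Jacobi fields at $\pi$ controls harmonic maps near $\pi$ only. It says nothing about the nullity at $A_t$ for large $t$.
\end{itemize}
A minor point: $z\mapsto Az/\abs{Az}$ is a projective, not a conformal, map of $\S^{2n+1}$ unless $A$ is a multiple of a unitary. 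You don't use this, so it is harmless.

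\textbf{The missing idea is an estimate that holds at every $A$.} This is what the paper proves, with no deformation argument.
\begin{itemize}
\item Via the transverse Bochner identity (Lemma~\ref{lem:C5}), the curvature term cancels and $Q(V,\bar V)=\int\bigl(2\abs{\bar\p_bV}^2+\abs{\nabla_\xi V}^2-2ni\<\nabla_\xi V,\bar V\>\bigr)$.
\item On Hopf weight $-m$, this descends to $Q_m(s)=2\norm{\bar\p s}^2-m(2n-m)\norm{s}^2$ for $s\in\Gamma(h^*T^{1,0}\CP^n\otimes\mathcal O(-m))$.
\item For $m=1$: $h^*T^{1,0}\CP^n$ is Nakano semipositive for any holomorphic $h$. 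Applied to $(n,1)$-forms, this gives $\norm{\bar\p s}^2\ge 2n\norm{s}^2$ on $H^0(\mathcal E(1))^\perp$.
\item For $m\ge 2$: averaging over hyperplane sections, plus Bochner--Kodaira for $\mathcal O(1-m)$ on $\CP^{n-1}$, gives $Q_m\ge (m-2)(m+2n-2)\norm{s}^2$.
\item The equality case $m=2$ is reduced to $A=I$ through the diffeomorphism $\ell\mapsto A^*\ell/\abs{A^*\ell}$ of the dual sphere. This changes the averaging measure but not which $s$ are null.
\end{itemize}
Nullity then comes only from $m=0$, namely $H^0(h^*T^{1,0}\CP^n)$ of real dimension $2n^2+4n$, and from $m=2$, of real dimension $n^2+n$.
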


\begin{remark}
Let us recover the isometry $\Xi$. We note that identity component of this family $u=h\circ\pi\circ\Xi$ is parametrized by the homogeneous space
\eq{\label{eq:sym_group}
    \frac{{\rm PGL}(n+1,\C)\times{\rm SO}(2n+2)}{{\rm U}(n+1)},
}
where ${\rm PGL}(n+1,\C)$ records the degree-one holomorphic map $h$, ${\rm SO}(2n+2)$ records the isometry $\Xi$, and ${\rm U}(n+1)$ cancels the repeated counting since the Hopf map $\pi$ is ${\rm U}(n+1)$-equivariant. The $\R$-dimension of \eqref{eq:sym_group} is precisely
\eq{
    2\{(n+1)^2-1\}+\frac{(2n+2)(2n+1)}{2}-(n+1)^2 = 3n^2+5n.
}
Hence Theorem~\ref{thm_nullity} implies that $h\circ\pi\circ \Xi$ is nondegenerate modulo its symmetry group. We record it as a corollary since it has its own interest.
\end{remark}

\begin{corollary}
    The map $h\circ \pi\circ \Xi$, as a harmonic map from $\S^{2n+1}$ to $\mathbb{CP}^n$, is nondegenerate modulo its symmetry group.
\end{corollary}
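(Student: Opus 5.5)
The corollary is a direct consequence of Theorem~\ref{thm_nullity}, combined with the observation that the symmetry group produces exactly $3n^2+5n$ linearly independent Jacobi fields. Nondegeneracy modulo symmetries means that every Jacobi field is tangent to the orbit of the symmetry group. So it is enough to show that the orbit directions form a subspace of the kernel of the Jacobi operator of dimension $3n^2+5n={\rm nul}(u)$.

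\textbf{Step 1: reduction.} Precomposition with the isometry $\Xi$ conjugates the Jacobi operator, so index and nullity are unchanged. Likewise, writing $h=g\circ{\rm id}$ with $g\in{\rm PGL}(n+1,\C)$ and using that $G={\rm PGL}(n+1,\C)\times{\rm SO}(2n+2)$ acts transitively on the family, it suffices to treat $u=\pi$.

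\textbf{Step 2: orbit directions are Jacobi fields.} Consider the orbit map
\[
\Phi(g,\Xi')=g\circ\pi\circ\Xi'.
\]
Every $\Phi(g,\Xi')$ is a harmonic map: composing $\pi$ with a holomorphic map of $\CP^n$ gives a harmonic map (this is the setting of Theorem~\ref{thmC.1}), and precomposition with an isometry preserves harmonicity. Hence the derivative of $\Phi$ along any curve through the identity is a Jacobi field along $\pi$.

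For $(A,B)\in \mathfrak{sl}(n+1,\C)\times\mathfrak{so}(2n+2)$, let $Z_A$ be the holomorphic vector field on $\CP^n$ induced by $A$, and let $Y_B(x)=Bx$ be the Killing field on $\S^{2n+1}$. The differential of $\Phi$ at the identity is
\[
d\Phi(A,B)=Z_A\circ\pi+\rd\pi(Y_B).
\]
Its image lies in the null space of the Jacobi operator. Since $\dim G=2\{(n+1)^2-1\}+(n+1)(2n+1)$, the only remaining task is to show that $\ker d\Phi$ has dimension exactly $(n+1)^2$. Subtracting, the image then has dimension $3n^2+5n$, which by Theorem~\ref{thm_nullity} equals the nullity, so the image is the whole kernel.

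\textbf{Step 3: computing $\ker d\Phi$ (the main obstacle).} Suppose $d\Phi(A,B)=0$. Then $\rd\pi(Y_B)=-Z_A\circ\pi$ is constant along the Hopf fibres, so $Y_B$ is projectable. With the Hopf field $V(x)=ix$, this means $[V,Y_B]$ is vertical. Computing,
\[
[V,Y_B](x)=(Bi-iB)x=:Cx .
\]
Verticality says that $Cx$ is a multiple of $ix$ for every $x$, and a linear map with this property must be of the form $C=c\,i$. On the other hand, $C=Bi-iB$ anticommutes with $i$, while $c\,i$ commutes with $i$, so $c=0$. Therefore $B$ commutes with $i$, that is, $B\in\mathfrak{u}(n+1)$.

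Then $\rd\pi(Y_B)$ is the holomorphic field on $\CP^n$ induced by the trace-free part $B_0$ of $B$, and the kernel condition forces $A=-B_0$. The scalar part $B=i\theta\,{\rm Id}$ is admissible because $Y_B=\theta V$ is vertical and $\rd\pi(V)=0$. Conversely, every such pair $(-B_0,B)$ with $B\in\mathfrak{u}(n+1)$ lies in the kernel. Hence
\[
\ker d\Phi\cong\mathfrak{u}(n+1),
\]
which has dimension $(n+1)^2$. This matches the count in the preceding remark and completes the argument.
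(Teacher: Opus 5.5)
Your proposal is correct and follows the paper's route. The paper's justification is the dimension count in the Remark preceding the corollary: the identity component of the family $h\circ\pi\circ\Xi$ is $\frac{{\rm PGL}(n+1,\C)\times{\rm SO}(2n+2)}{{\rm U}(n+1)}$, whose real dimension is $3n^2+5n={\rm nul}(u)$. Your Steps 2 and 3 make explicit what the paper leaves implicit: orbit directions are Jacobi fields because every map in the family is harmonic, and the infinitesimal stabilizer of the orbit map is exactly the Lie algebra of ${\rm U}(n+1)$.
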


These three Theorems imply Theorem \ref{main_thm}.

We now emphasize the difference between the algebraic degree and the topological degree. Each holomorphic self-map of $\CP^n$ is formed by holomorphic homogeneous polynomials of algebraic degree $d$, see the discussion in Subsection~\ref{sec4.5}. We say that it has algebraic degree $d$, and topological degree $d^n$. When $n=1$, the two values coincide.

\ 

\noindent\textit{Organization of the paper.} Section~\ref{sec2} reviews the first and second variation formulas for harmonic maps, conformal Killing vector fields on spheres, the Weitzenb\"ock formula for differential forms, and basic facts about the Morse index, including El Soufi's lower bound for maps from spheres. Section~\ref{sec3} develops the test-vector argument (in the spirit of \cite{Riviere23}), proves \eqref{eq_qhwc} and \eqref{eq:key_observation}, and deduces the decomposition $u=h\circ\pi\circ\Xi$ in Theorem~\ref{Theorem_R}. Section~\ref{sec4} uses the formulation of complexification of the second variation, and shows the sources of Morse index and nullity, which leads to lower bounds of the both. Section~\ref{sec5} devotes itself to the value of ${\rm ind}(h\circ\pi)$. In Section~\ref{sec6}  we compute the value of ${\rm nul}(h\circ\pi)$.
In Appendix~\ref{appendix_A}, we recall the Sasakian structure and the Hopf fibration. Appendix~\ref{appendix_B} gives an example, showing that pseudo horizontal weak conformality does not necessarily imply horizontal weak conformality.

\section{Preliminaries}\label{sec2}

\subsection{The Dirichlet energy and its variation formulas}
Let $(M,g)$ and $(N,h)$ be two closed smooth manifolds and let $u:(M,g)\to (N,h)$ be a smooth map. Its differential $\rd u$ is a section of $T^*M\otimes u^*TN$, with $u^*TN$ the pull-back of $TN$ by $u$. Throughout, $\{e_j\}$ denotes a local orthonormal frame on $M$ and $\{e_\alpha\}$ a local frame on $N$. In local coordinates, one may write
\eq{
    \rd u = \rd u^\alpha \otimes (e_\alpha\circ u) \in \Gamma(T^*M\otimes u^*TN),
}
where $u^\alpha\in C^\infty(M)$ are the component functions.

The (Dirichlet) energy of $u$ is
\eq{
    E(u) = \frac{1}{2} \int_M \abs{\rd u}_{T^*M\otimes u^*TN}^2
    = \frac{1}{2} \int_M \rtr_g(u^*h).
}
Let $u_t$ be a smooth variation with $u_0=u$ and a variation vector field
$$
 w \coloneqq \frac{\rd u_t}{\rd t}\Big|_{t=0} \in \Gamma(u^*TN).
$$
The first variation formula (see \cite{Xin12}) is
\eq{
    \frac{\rd}{\rd t}\Big|_{t=0}E(u_t) = -\int_M \<\rtr_g(\nabla\rd u), w\>_h.
}
Accordingly, $u$ is harmonic if and only if its tension field vanishes:
\eq{\label{eq_harmonic_map}
    \tau_g(u) \coloneqq \rtr_g(\nabla\rd u) = (\nabla_{e_j}\rd u)(e_j)=0.
}
The second variation formula (see \cite{Xin12}) can be written in terms of the Jacobi operator; with the convention for the Riemann curvature $R$
$$
 R(X,Y)Z \coloneqq \nabla_X\nabla_Y Z-\nabla_Y\nabla_X Z-\nabla_{[X,Y]}Z,
$$
the Jacobi operator along a harmonic map is
\eq{\label{eq_Jacobi_operator}
    Lw \coloneqq \nabla^*\nabla w - R^N\bigl(w,\rd u(e_j)\bigr)\rd u(e_j).
}
Equivalently, for a variation with variational vector field $w$ one has
\eq{
    \frac{\rd^2}{\rd t^2}\Big|_{t=0}E(u_t)
    = \int_M \Big( \abs{\nabla w}_{T^*M\otimes u^*TN}^2 - \<R^N(w,\rd u(e_j))\rd u(e_j), w\>_h \Big).
}
We work primarily from an intrinsic viewpoint; for an extrinsic approach see \cite{Lin-Wang08}.

\subsection{The conformal Killing vector fields on \texorpdfstring{$\mathbb{S}^m$}{m-sphere}}

Let $(M,g)=(\S^m,g_{{\rm st}})$. The conformal transformations of $\S^m$ (modulo isometries) are given by
\eq{\label{eq_def_phi}
    \phi_a(x) \coloneqq (1-\abs{a}^2)\frac{x-a}{\abs{x-a}^2}-a, \quad a\in \mathbb{B}^{m+1}.
}
Their generating vector fields can be taken as
\eq{
    X_i \coloneqq \nabla x_i = \epsilon_i - \<\epsilon_i, x\>x = \epsilon_i - x_ix,
}
where $\epsilon_i$ are the standard basis vectors in $\R^{m+1}$ and $x_i$ are the coordinate functions restricted to $\S^m$. $X_i$'s span the space of (non-trivial) conformal Killing vector fields, which plays a very important role in this paper. 

The functions $\{x_i\}_{i=1}^{m+1}$ span the first eigenspace of the rough Laplacian $\nabla^*\nabla$ with eigenvalue $m$; more precisely, for $f=x_i$ one has
\eq{
    \nabla^*\nabla f = mf, \quad {\rm Hess}f = -fg.
}
In particular,
\eq{\label{eq_conformal_Killing}
    \nabla_X\nabla f = {\rm Hess}f(X) = -fX, \quad \forall X\in\Gamma(T\S^m),
}
so $\nabla f$ is a conformal Killing vector field on $\S^m$.

\subsection{The Weitzenb\"ock formula}

The Hodge Laplacian on differential forms is
\eq{
    \Delta = (\rd + \rd^*)^2 = \rd\rd^* + \rd^*\rd,
}
where $\rd$ is the exterior derivative and $\rd^*$ the codifferential. It is related to the rough Laplacian $\nabla^*\nabla$ by the Weitzenb\"ock formula
\eq{
    \Delta = \nabla^*\nabla + \sum_{j,k} e_k^\flat \w e_j \ip R(e_j, e_k),
}
where $\nabla^*\nabla = -\nabla_{e_i}\nabla_{e_i}$ in a geodesic coordinate. For a map $u:(M,g)\to (N,h)$ we regard $\rd u$ as a $u^*TN$-valued $1$-form, i.e. $\rd u\in \Omega^1(M,u^*TN)$. In this setting, the Weitzenb\"ock formula reads
\eq{\label{eq_Weitzenbock}
    \Delta\rd u = \nabla^*\nabla\rd u + {\rm Ric}^M(\rd u) - R^N\bigl(\rd u, \rd u(e_j)\bigr)\rd u(e_j).
}

\subsection{Morse index of a harmonic map}

For a harmonic map $u:(M,g)\to (N,h)$, the Morse index ${\rm ind}(u)$ (with respect to the Dirichlet energy) is the maximal dimension of a subspace of $ \Gamma(u^*TN)$ on which the index form is negative definite. The index form is
\eq{
    I(w,v) \coloneqq \int_M \<Lw,v\>.
}

As recalled in the Introduction, El Soufi \cite{ElSoufi95} proved that any non-constant harmonic map from $\S^m$ into an arbitrary target has index at least $m+1$; see also Ni \cite{Ni23}. For completeness and also for fixing notations, we include a proof in intrinsic notation.

\begin{theorem}[\cite{ElSoufi95}]\label{thm_2.1}
Let $u$ be a smooth non-constant harmonic map from $\S^m$ ($m\geq3$) to any target $N$. Then ${\rm ind}(u)\ge m+1$.
\end{theorem}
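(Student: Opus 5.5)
The plan is the classical test-vector argument with the conformal Killing fields. For each $a\in\R^{m+1}$ I take $X_a=\sum_i a_iX_i=\nabla f_a$, where $f_a=\<a,x\>$, and the variation field $w_a\coloneqq \rd u(X_a)\in\Gamma(u^*TN)$. This is the variation of $u$ along the conformal flow $u\circ\phi_{ta}$. The goal is to show that
$$I(w_a,w_a)<0\quad\text{whenever } w_a\neq0,$$
and that the map $a\mapsto w_a$ is injective. Together these give an $(m+1)$-dimensional negative definite subspace, so ${\rm ind}(u)\ge m+1$.

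\textbf{Computing $Lw_a$.} Write $X=X_a$, $f=f_a$ and $e_j$ for a geodesic frame. First,
$$\nabla_{e_j}(\rd u(X))=(\nabla_{e_j}\rd u)(X)+\rd u(\nabla_{e_j}X)=(\nabla_{e_j}\rd u)(X)-f\,\rd u(e_j),$$
by \eqref{eq_conformal_Killing}. Differentiating once more and using the symmetry of $\nabla\rd u$, I get
$$\nabla^*\nabla w=(\nabla^*\nabla\rd u)(X)+2(\nabla_{X}\rd u)(\nabla f)\cdot(\text{terms})\ldots$$
More carefully, I expect
$$\nabla^*\nabla(\rd u(X))=(\nabla^*\nabla \rd u)(X)+2f\,\tau(u)\ \text{-type terms}+2\,\rd u(\nabla f)\ \text{-type terms}+(\nabla_{\nabla f}\rd u)\ \text{-terms}.$$
To organize this, I use the Weitzenb\"ock formula \eqref{eq_Weitzenbock}. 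Since $u$ is harmonic and $\rd(\rd u)=0$, we have $\Delta\rd u=0$. Since ${\rm Ric}^{\S^m}=(m-1)g$, this gives
$$\nabla^*\nabla\rd u=-(m-1)\rd u+R^N(\rd u,\rd u(e_j))\rd u(e_j).$$
Plugging this in, the curvature terms cancel exactly against the curvature part of $L$. What remains, after using ${\rm Hess} f=-fg$ and $\nabla_{e_j}\nabla_{e_j}X=-\nabla_{e_j}(fe_j)=-\rd f(e_j)e_j=-X$, is
$$Lw_a=-(m-1)\rd u(X)+\rd u(X)\cdot(\ldots)+2f\,\tau(u)=-(m-2)\,w_a+2\,(\text{a term }(\nabla_{e_j}\rd u)(e_j) f=0).$$
So I expect
$$L w_a=(2-m)\,w_a,\qquad\text{hence}\qquad I(w_a,w_a)=-(m-2)\int_{\S^m}|\rd u(X_a)|^2.$$
The coefficient is the step to double-check. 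Whatever it is, it should be negative for $m\ge3$; this is exactly why $m\ge3$ is needed, since for $m=2$ the Jacobi fields are conformal and there are stable maps.

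\textbf{Injectivity.} Suppose $\rd u(X_a)\equiv0$ for some $a\neq0$. Then $u$ is constant along the flow lines of $X_a$. Every such flow line runs from $-a/|a|$ to $a/|a|$, and these lines cover the sphere minus the two poles. Hence $u$ is constant on a dense set, so $u$ is constant by continuity, which is a contradiction. Consequently $\{w_a\}$ is an $(m+1)$-dimensional space on which $I$ is negative definite.

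The main obstacle is the bookkeeping in the computation of $\nabla^*\nabla(\rd u(X))$. This means tracking the cross term $-2\sum_j(\nabla_{e_j}\rd u)(\nabla_{e_j}X)=2f\tau(u)=0$ and the term $-\rd u(\nabla_{e_j}\nabla_{e_j}X)=\rd u(X)$. Both rely on \eqref{eq_conformal_Killing}, and getting the signs right is what makes the final coefficient $-(m-2)$ come out.
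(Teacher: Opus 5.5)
Your proposal is correct and is essentially the paper's proof. You use the same test fields $\rd u(X_a)$ and the same expansion of $\nabla^*\nabla(\rd u(X))$: the cross term gives $2f\tau(u)=0$, and $\nabla_{e_j}\nabla_{e_j}X=-X$ contributes $\rd u(X)$. Combined with the Weitzenb\"ock formula and $\Delta\rd u=0$, this yields $L\,\rd u(X_a)=-(m-2)\rd u(X_a)$, and your injectivity argument is that of Lemma~\ref{lem2.1}, with the flow lines of $X_a$ playing the role of the family $\phi_{t\epsilon_1}$.

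When writing it up, remove the garbled intermediate displays. In the injectivity step, say explicitly that the constant value of $u$ on each flow line equals $u(a/|a|)$ by continuity at the common endpoint, since that is what makes the constants on different flow lines agree.
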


\begin{proof}
Fix $f=x_i$ and consider the conformal Killing field $\nabla f$ on $\S^m$. Set
$$
    w \coloneqq \rd u(\nabla f) \in \Gamma(u^*TN).
$$
At a point $p\in\S^m$, choose a local orthonormal frame $\{e_j\}$ with $\nabla e_j|_p=0$. Using \eqref{eq_conformal_Killing}, we obtain
\eq{\label{eq0_thm2.1}
    \nabla_{e_j}w
    = (\nabla_{e_j}\rd u)(\nabla f) + \rd u(\nabla_{e_j}\nabla f)
    = (\nabla_{e_j}\rd u)(\nabla f) - f\rd u(e_j)
}
and
\eq{\label{eq1_thm2.1}
    \nabla_{e_j}\nabla_{e_j}w
    &= (\nabla_{e_j}\nabla_{e_j}\rd u)(\nabla f) + (\nabla_{e_j}\rd u)(\nabla_{e_j}\nabla f) - (\nabla_{e_j}f)\rd u (e_j) - f(\nabla_{e_j}\rd u)(e_j)\\
    &= (-\nabla^*\nabla\rd u)(\nabla f) - f(\nabla_{e_j}\rd u)(e_j) - \rd u(\nabla f) - f(\nabla_{e_j}\rd u)(e_j)\\
    &= (-\nabla^*\nabla\rd u)(\nabla f) - w - 2f(\nabla_{e_j}\rd u)(e_j).
}
Since ${\rm Ric}^{\S^m}=(m-1)g$, \eqref{eq_Weitzenbock} becomes
\eq{\label{eq:eq2a_Thm2.1}
    -\nabla^*\nabla\rd u = -\Delta\rd u + (m-1)\rd u - R^N(\rd u,\rd u(e_j))\rd u(e_j).
}
Moreover, the harmonic map equation \eqref{eq_harmonic_map} gives
\eq{
    0 = (\nabla_{e_j}\rd u)(e_j) = -\rd^*\rd u,
}
and hence
\eq{
    \Delta\rd u = \rd\rd^*\rd u = 0.
}
Substituting the above identities into \eqref{eq1_thm2.1} and \eqref{eq:eq2a_Thm2.1} yields
\eq{\label{eq2_thm2.1}
    -\nabla^*\nabla w = \nabla_{e_j}\nabla_{e_j}w = (m-2)w - R^N(w,\rd u(e_j))\rd u(e_j).
}
Therefore, by \eqref{eq_Jacobi_operator},
\eq{\label{eq3_thm2.1}
    Lw = -(m-2)w.
}

The index estimate now follows from the fact (proved in \cite{ElSoufi95}) that the sections $\{\rd u(X_i)\,|\,1\le i\le m+1\}$ are linearly independent; this is a consequence of Lemma~\ref{lem2.1} below.
\end{proof}

\begin{lemma}[\cite{ElSoufi95}]\label{lem2.1}
For any non-constant map $u:\S^m\to N$ ($m\geq3$) and any non-trivial conformal Killing vector field $X$ on $\S^m$, one has $\rd u(X)\neq 0$.
\end{lemma}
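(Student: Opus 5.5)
Suppose, for contradiction, that $X$ is a non-trivial conformal Killing vector field with $\rd u(X)\equiv 0$. Modulo Killing fields, the space of conformal Killing fields on $\S^m$ is spanned by the gradients $\nabla x_i$. A general conformal Killing field has the form $X=\nabla f+K$, where $f=\langle a,x\rangle$ for some $a\in\R^{m+1}$ and $K$ is Killing. The statement is about non-trivial conformal fields, which I read as $a\neq 0$; this is also the case used in Theorem~\ref{thm_2.1}, where only the span of the $X_i$ matters.

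First consider the pure gradient case $X=\nabla f$ with $a\neq0$. After a rotation we may take $f=x_{m+1}$. The vector field $X$ vanishes only at the two poles $\pm\epsilon_{m+1}$. Its flow lines are the meridian great-circle arcs running from the south pole to the north pole, and it is the generator of the one-parameter family of conformal maps $\phi_{ta}$ from \eqref{eq_def_phi}. Since $\rd u(X)=0$, the map $u$ is constant along each flow line. Every such line accumulates at the south pole, so by continuity $u(x)=u(-\epsilon_{m+1})$ for all $x$ off the poles. Thus $u$ is constant, a contradiction.

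For the mixed case $X=\nabla f+K$ with $a\neq0$, I would use the normal form of conformal vector fields. Conjugating by an isometry, we can arrange that $K$ commutes with $\nabla f$, so that $K$ is a rotation fixing the axis through $a$. Then the flow of $X$ is the composition of a rotation about that axis with the hyperbolic flow $\phi_{ta}$. Every orbit other than those of fixed points therefore still converges to the repelling fixed point $-a/|a|$ as $t\to-\infty$. Indeed, the hyperbolic part drives the height $f$ monotonically, because $X(f)=|\nabla f|^2>0$ away from the poles, while $K(f)=0$. Constancy of $u$ along orbits then gives $u\equiv u(-a/|a|)$ on a dense set, and hence everywhere.

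The delicate step is exactly this normal-form and dynamics claim when $K$ does not commute with $\nabla f$. As a fallback I would argue through the Lyapunov function $f$ directly. We have $X(f)=|\nabla f|^2+K(f)$, and $K(f)=\langle Ax,a\rangle$ is linear in $x$, where $A$ is the skew-symmetric matrix of $K$. The aim is to show that $X$ (or $-X$) has an attracting zero whose basin is open and dense. This holds because the conformal group element $\exp(tX)$ lies in $SO(m+1,1)$ and is either loxodromic or parabolic whenever the boost part is non-zero: the loxodromic case has an attracting fixed point with dense basin, and the parabolic case has a single fixed point attracting every orbit. In both cases $u$ is constant.
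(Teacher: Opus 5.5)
Your pure gradient case is correct and is the paper's proof. The paper rotates to $X=X_1=\nabla x_1$, notes that $u\circ\phi_{t\epsilon_1}$ is independent of $t\in(0,1)$, and lets $t\to1$. This is the same as your observation that every non-polar orbit of $\nabla f$ converges to a pole, so $u$ takes the value it has there. In the paper, a ``non-trivial conformal Killing field'' means a non-zero element of $\mathrm{span}_\R\{X_i\}$, i.e.\ exactly $\nabla f$ with $f=\langle a,x\rangle$, $a\neq0$. This is also the only case ever used: linear independence of the $\rd u(X_i)$ in Theorem~\ref{thm_2.1}, and $\rd u(X_i+A_{ij}A_{jk}X_k)=0$ in Step~1 of the proof of Theorem~\ref{Theorem_R}. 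So your first case alone is a complete proof.

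The mixed case is where the proposal genuinely fails. It should be deleted, not repaired: under your reading ($X=\nabla f+K$ with $K$ Killing and only $a\neq0$ required) the statement is false.

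Let $\phi=\phi_{t\epsilon_1}$ with $t\in(0,1)$, let $K$ be the rotation field in the $x_1x_2$-plane, and set $X\coloneqq\phi_*K$. Its flow $\phi\circ e^{sK}\circ\phi^{-1}$ is periodic. Yet $X$ is not Killing. Its zero set is $\phi(\{x_1=x_2=0\}\cap\S^m)$, which is non-empty and lies in the hyperplane $\{x_1=-2t/(1+t^2)\}$. By contrast, the zero set $\ker A'\cap\S^m$ of a Killing field $x\mapsto A'x$ is invariant under $x\mapsto-x$. Hence $X=\nabla f+K'$ with $a\neq0$.

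Now take any non-constant curve $\gamma$ in $N$ and set $u\coloneqq\gamma\circ x_3\circ\phi^{-1}$. Then $u$ is non-constant, and $\rd u(X)=0$ because $K(x_3)=0$.

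Both of your mixed-case arguments break at exactly this point:
\begin{itemize}
\item Conjugating by an isometry cannot make $K$ commute with $\nabla f$. Indeed $[K,\nabla f]=-\nabla\langle Aa,x\rangle$, and the condition $Aa=0$ is unchanged under $a\mapsto Ra$, $A\mapsto RAR^{-1}$.
\item A non-zero boost (gradient) part does not force $\exp(sX)$ to be loxodromic or parabolic. The boost part is not invariant under conformal conjugation: $X=\phi_*K$ has non-zero boost part, yet $\exp(sX)=\phi e^{sK}\phi^{-1}$ is elliptic.
\end{itemize}
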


\begin{proof}
Suppose that $X$ is a conformal Killing field with $\rd u(X)=0$ on $\S^m$. Without loss of generality, assume $X=X_1=\nabla x_1$. Then $u\circ\phi_{t\epsilon_1}$ is independent of   $t\in(0,1)$, where $\phi$ is defined in \eqref{eq_def_phi}. Letting $t\to 1$ forces $u$ to be constant, a contradiction.
\end{proof}

\section{Proof of Theorem \ref{Theorem_R}}\label{sec3}

We start with an algebraic consequence of the least-index assumption. For $n=1$ this observation is due to Rivi\`{e}re~\cite{Riviere23}.

\begin{proposition}\label{prop3.1}
Let $u:(\S^{2n+1},g_{\rm st})\to(\mathbb{CP}^n,h)$ be a smooth harmonic map, where $h$ is the Fubini--Study metric. If ${\rm ind}(u)\le 2n+2$, then
\eq{\label{commutative_dudu_J}
[(\rd u)(\rd u)^*,J]=0,
}
where $J$ is the complex structure on $\mathbb{CP}^n$.
\end{proposition}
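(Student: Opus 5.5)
We test the index form on the $4n+4$ sections
\[
w_i\coloneqq \rd u(X_i),\qquad Jw_i = J\rd u(X_i),\qquad 1\le i\le 2n+2,
\]
following Rivi\`ere's strategy for $n=1$. By \eqref{eq3_thm2.1}, $Lw_i=-(2n-1)w_i$, and by Lemma~\ref{lem2.1} the $w_i$ span a $(2n+2)$-dimensional negative space $W$. If ${\rm ind}(u)\le 2n+2$, then $W$ is a maximal negative subspace. Hence for every $\xi\in{\rm span}\{Jw_i\}$ that is $I$-orthogonal to $W$ we must have $I(\xi,\xi)\ge 0$.

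\textbf{Step 1: a weighted trace of the $Jw_i$.} We compute $I(Jw_i,Jw_i)$. Since $\mathbb{CP}^n$ is K\"ahler, $\nabla J=0$, so $\nabla^*\nabla(Jw)=J\nabla^*\nabla w$. Using \eqref{eq2_thm2.1},
\[
L(Jw)=J\nabla^*\nabla w - R(Jw,\rd u(e_j))\rd u(e_j) = -(2n-1)Jw + \bigl[JR(w,\rd u(e_j))\rd u(e_j)-R(Jw,\rd u(e_j))\rd u(e_j)\bigr].
\]
For the Fubini--Study metric (holomorphic sectional curvature $4$) we have
\[
R(X,Y)Z=\<Y,Z\>X-\<X,Z\>Y+\<JY,Z\>JX-\<JX,Z\>JY+2\<X,JY\>JZ .
\]
Inserting this, the bracket becomes an explicit algebraic expression in $A\coloneqq(\rd u)(\rd u)^*$ and $J$. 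Summing over $i$ with $\sum_i X_i\otimes X_i$ evaluated pointwise ($\sum_i\<X_i,a\>\<X_i,b\>=\<a,b\>$ on tangent vectors), we expect, by analogy with the $n=1$ case,
\[
\sum_i I(Jw_i,Jw_i)= -(2n-1)\int|\rd u|^2\,\mathrm{vol} + c\int \bigl|[A,J]\bigr|^2,
\]
up to positive constants. Equivalently, the negative term is $-c'\int|[A,J]|^2$ after the $|\rd u|^2$ terms cancel against the corresponding $W$ contribution.

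\textbf{Step 2: project off $W$.} Define $\xi_i\coloneqq Jw_i-P(Jw_i)$, where $P$ is the $I$-projection onto $W$ (equivalently the $L^2$ projection, since $L=-(2n-1)$ on $W$). Then
\[
I(\xi_i,\xi_i)=I(Jw_i,Jw_i)+(2n-1)\|P Jw_i\|^2 .
\]
The coefficients of $P$ involve $\int\<Jw_i,w_k\>=\int\<J\rd u(X_i),\rd u(X_k)\>$. Summing over $i$ and using $\sum_i X_i\otimes X_i=g$ pointwise, together with the $O(2n+2)$-invariance of the frame, we estimate $\sum_i\|PJw_i\|^2$. We want to show it is at most the ``diagonal'' part, hopefully $\int|\rd u|^2$ times a constant, with the antisymmetry of $\<J\cdot,\cdot\>$ killing the symmetric part. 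Maximality of $W$ forces $\sum_i I(\xi_i,\xi_i)\ge0$. Combining this with Step 1 should yield $-c\int|[A,J]|^2\ge0$, hence $[A,J]=0$.

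\textbf{Main obstacle.} The delicate point is the exact bookkeeping in Steps 1 and 2. The $|\rd u|^2$ terms, the projection terms, and the curvature terms must combine into a nonpositive multiple of $\int|[A,J]|^2$ with no leftover positive term. If the naive sum does not close, we would first try a symmetrization: replace $Jw_i$ by the family $J\rd u(X_i)\pm \rd u(X_k)$, or average over the full $O(2n+2)$-orbit of frames. The aim is to make the cross terms $\int\<Jw_i,w_k\>$ enter only through their antisymmetric part, which integrates to zero against the symmetric weights $\<X_i,X_k\>$. As a fallback, we would use $w_i\pm Jw_k$ test combinations and apply the index bound on each two-dimensional extension of $W$.
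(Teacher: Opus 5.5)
\textbf{Verdict: genuine gap.} Your strategy is the paper's: Rivi\`ere's test sections $J\rd u(X_i)$, saturation of the index by $W={\rm span}\{\rd u(X_i)\}$, and summation over $i$ using $\sum_i X_i\otimes X_i=g$. But as written the proof is incomplete.

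\textbf{The missing identity.} The one computation that carries the proposition is only ``expected by analogy'', and you leave open whether it closes at all. The sign in your Step~1 display is also ambiguous: you write $+c\int|[A,J]|^2$, then speak of a negative term. With $c>0$ the final inequality would be vacuous. What you must supply is the explicit identity. With $Y_j=\rd u(e_j)$ and your curvature formula, one gets pointwise $JR(w,Y_j)Y_j-R(Jw,Y_j)Y_j=4\bigl(\<Jw,Y_j\>Y_j-\<w,Y_j\>JY_j\bigr)$. Hence
\[
\tfrac14\int\<(L+2n-1)(Jw),Jw\>=\int\sum_j\bigl(\<Jw,Y_j\>^2-\<w,Y_j\>^2\bigr).
\]
Now put $w=w_i$, sum over $i$, and convert the $e_j$-sum into an $X_k$-sum. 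The integrand becomes
\[
\sum_{i,k}\bigl(\<J\rd u(X_i),\rd u(X_k)\>^2-\<\rd u(X_i),\rd u(X_k)\>^2\bigr)={\rm tr}(J^\top AJA)-{\rm tr}(A^2)=-\tfrac12\|[A,J]\|_{\rm HS}^2 .
\]
This uses $BB^\top=A$ for $B\epsilon_i=\rd u(X_i)$, together with the symmetry of $[A,J]$. This definite negative sign is the entire content of the proposition, and it is exactly what your proposal does not establish.

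\textbf{Step~2 has no real obstacle.} Dispose first of the trivial constant case, which you need because Lemma~\ref{lem2.1} assumes $u$ nonconstant. Then the $2n+2$ independent $w_i$ are $-(2n-1)$-eigensections and the index is $2n+2$, so every other eigenvalue of $L$ is $\ge 0$. Hence $L+(2n-1)\ge 0$ on all of $\Gamma(u^*T\CP^n)$. Testing this directly on $Jw_i$, as the paper does, needs no projection, and the $-(2n-1)Jw$ term cancels exactly.

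If you prefer to keep your projected sections $\xi_i$, use the trivial bound $\|PJw_i\|_{L^2}^2\le\|Jw_i\|_{L^2}^2=\|w_i\|_{L^2}^2$ together with $\sum_i\|w_i\|_{L^2}^2=\int|\rd u|^2$. These give
\[
0\le\sum_i I(\xi_i,\xi_i)=-(2n-1)\Bigl(\int|\rd u|^2-\sum_i\|PJw_i\|_{L^2}^2\Bigr)-2\int\|[A,J]\|_{\rm HS}^2 ,
\]
and both terms on the right are $\le 0$. So no cross-term bookkeeping, symmetrization, or $O(2n+2)$-averaging is needed. As a by-product, equality also forces $Jw_i\in W$, which is the content of Proposition~\ref{prop3.3}.
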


\begin{proof}
We follow \cite{Riviere23}.
If $u$ is constant there is nothing to prove. Assume $u$ is non-constant. By Theorem~\ref{thm_2.1} we have ${\rm ind}(u)\ge 2n+2$, so necessarily ${\rm ind}(u)=2n+2$.

From the proof of Theorem~\ref{thm_2.1}, the sections $\{\rd u(X_i)\}_{i=1}^{2n+2}$ are linearly independent and satisfy
\eq{
    L\,\rd u(X_i) = -(2n-1)\rd u(X_i), \quad 1\le i\le 2n+2.
}
Thus $-(2n-1)$ is the unique negative eigenvalue of $L$, with eigenspace spanned by $\{\rd u(X_i)\}_{i=1}^{2n+2}$. Equivalently,
\eq{\label{eq0_thm1.1}
    L + (2n-1) \text{ is positive semi-definite}.
}
We now test \eqref{eq0_thm1.1} against $Jw$ with $w\coloneqq\rd u(X_i)$. Since $\nabla J=0$ and using \eqref{eq2_thm2.1}, we obtain
\eq{
    -\nabla ^* \nabla (Jw) = (2n-1) Jw - J (R^N(w, \rd u(e_j))\rd u(e_j)),
}
where $N=\mathbb{CP}^n$. Using \eqref{eq_Jacobi_operator} we have
\eq{\label{eq_eigen_J}
    (L+2n-1)(Jw) = J (R^N(w, \rd u(e_j))\rd u(e_j)) - R^N(Jw,\rd u(e_j))\rd u(e_j).
}
We compute the right-hand side by recalling that the Riemann curvature of $\mathbb{CP}^n$ satisfies (see for instance \cite{Kobayashi-Nomizu96}, and note that we adopt the convention that the holomorphic sectional curvature is $4$ in order to make the Hopf map a Riemannian submersion)
\eq{\label{holo_sec_curvature}
    R(X,Y)Y = \abs{Y}^2X - \<X,Y\>Y + 3\<X,JY\>JY.
}
It follows
\eq{
    J(R^N(w, \rd u(e_j))\rd u(e_j)) = J(\abs{\rd u}^2w - \<w,\rd u(e_j)\>\rd u(e_j) + 3\<w, J\rd u(e_j)\>J\rd u(e_j) )
}
and
\eq{
    R^N(Jw, \rd u(e_j))\rd u(e_j) = \abs{\rd u}^2 Jw - \<Jw, \rd u(e_j)\>\rd u(e_j) + 3\<w, \rd u(e_j)\>J\rd u(e_j).
}
Putting them into the right-hand side of \eqref{eq_eigen_J} we have
\eq{
    \frac{1}{4}(L+2n-1)(Jw)=-\< w, \rd u(e_j)\> J\rd u(e_j) + \< Jw, \rd u(e_j)\> \rd u(e_j),
}
and hence
\eq{\label{eq1_thm1.1}
    \frac{1}{4}\int\<(L+2n-1)(Jw),Jw\> = \int\sum_j \<\rd u(e_j),Jw\>^2 - \int\sum_j \<\rd u(e_j),w\>^2 = \int\<\rd u,Jw\>^2 - \int\<\rd u,w\>^2,
}
which is non-negative, by \eqref{eq0_thm1.1}.
Choosing $w=\rd u(X_i)$ in \eqref{eq1_thm1.1} and summing over $i$ yields
\eq{\label{eq_sum}
\frac 14 \int\sum_{i=1}^{2n+2}\<(L+2n-1) (J\rd u(X_i)), J\rd u (X_i)\>=\int\sum_{i=1}^{2n+2}\{\< \rd u, J \rd u(X_i)\>^2 -\<\rd u, \rd u(X_i)\>^2\}=:\int I.
}
We claim that
\eq{\label{claim}
    I=-\frac{1}{2}\|[(\rd u) (\rd u)^*, J]\|_{\rm HS}^2,
}
where $[(\rd u) (\rd u)^*, J]=(\rd u) (\rd u)^*J-J(\rd u)(\rd u)^*:u^*T\mathbb{CP}^n \to u^*T\mathbb{CP}^n$.\, and $\|\cdot\|_{\mathrm{HS}}$ denotes the Hilbert--Schmidt norm on endomorphisms of $u^*T\mathbb{CP}^n$.
Since the integration of the left hand side
of \eqref{eq_sum} is non-negative, the Proposition follows clearly from the claim.

We now prove the claim. In the following computation, it is convenient to use $\{X_i\}_{i=1}^{2n+2}$ instead of $\{e_j\}_{i=1}^{2n+1}$, though  $\{X_i\}_{i=1}^{2n+2}$ is not a basis.
Note that for the standard metric on $\S^{2n+1}$ we have
\eq{
    g_{{\rm st}} = \sum_{i=1}^{2n+2} \rd x_i\otimes \rd x_i = \sum_{i=1}^{2n+2} X_i^\flat\otimes X_i^\flat.
}
Therefore for any 1-forms $\alpha,\beta$ on $\S^{2n+1}$ we have
\eq{\label{X_i_is_basis}
    \<\alpha,\beta\> = \sum_{j=1}^{2n+1} \alpha(e_j)\beta(e_j)=\sum_{i=1}^{2n+2} \alpha(X_i)\beta(X_i).
}
It follows
\eq{
    \< \rd u , Jw\>^2=\sum _{j} \< \rd u(e_j), Jw\>^2=\sum_{i=1}^{2n+2}\< \rd u (X_i), J w\>^2. 
} 
Similarly, we can compute $\<\rd u,w\>^2$.
Therefore, 
\eq{\label{eq2_thm1.1}
  I=\sum_{i,j=1}^{2n+2} \Big( \<\rd u(X_i), J\rd u(X_j)\>^2 - \<\rd u(X_i),\rd u(X_j)\>^2 \Big).
}
To continue, we choose a local orthonormal basis on $\mathbb{CP}^n$ by $\{e_\alpha,e_{\bar{\alpha}}\coloneqq Je_\alpha\}_{\alpha=1}^n$, and denote
\eq{
    u_i^\alpha \coloneqq \rd u^\alpha(X_i), \quad u_i^{\bar{\alpha}} \coloneqq \rd u^{\bar{\alpha}}(X_i).}
It follows
\begin{align}
    I&=\sum_{i,j} \Big( \<\rd u(X_i),J\rd u(X_j)\>^2 - \<\rd u(X_i),\rd u(X_j)\>^2 \Big)\\
    &= \sum_{i,j} \Big\{ \Big( \sum_\alpha u_i^{\bar{\alpha}}u_j^{\alpha} - \sum_\alpha u_i^{\alpha}u_j^{\bar{\alpha}} \Big)^2 - \Big( \sum_\alpha u_i^{\alpha}u_j^{\alpha} + \sum_\alpha u_i^{\bar{\alpha}}u_j^{\bar{\alpha}} \Big)^2 \Big\}\\
    &= \sum_{i,j} \Big\{ \Big( \sum_\alpha u_i^{\bar{\alpha}}u_j^{\alpha} \Big)^2 + \Big( \sum_\alpha u_i^{\alpha}u_j^{\bar{\alpha}} \Big)^2 - 2\Big( \sum_\alpha u_i^{\bar{\alpha}}u_j^{\alpha} \Big)\Big( \sum_\alpha u_i^{\alpha}u_j^{\bar{\alpha}} \Big)\\
    &\quad\qquad -\Big( \sum_\alpha u_i^{\alpha}u_j^{\alpha} \Big)^2 - \Big( \sum_\alpha u_i^{\bar{\alpha}}u_j^{\bar{\alpha}} \Big)^2 - 2\Big( \sum_\alpha u_i^{\alpha}u_j^{\alpha} \Big)\Big( \sum_\alpha u_i^{\bar{\alpha}}u_j^{\bar{\alpha}} \Big) \Big\}\\
    &= \sum_{i,j} \Big\{ \sum_{\alpha,\beta} u_i^{\bar{\alpha}}u_j^\alpha u_i^{\bar{\beta}}u_j^\beta +\sum_{\alpha,\beta} u_i^{\alpha}u_j^{\bar{\alpha}} u_i^{\beta}u_j^{\bar{\beta}} - 2\sum_{\alpha,\beta} u_i^{\bar{\alpha}}u_j^{\alpha} u_i^{\beta}u_j^{\bar{\beta}}\\
    &\quad\qquad -\sum_{\alpha,\beta} u_i^{\alpha}u_j^{\alpha} u_i^{\beta}u_j^{\beta} - \sum_{\alpha,\beta} u_i^{\bar{\alpha}}u_j^{\bar{\alpha}} u_i^{\bar{\beta}}u_j^{\bar{\beta}} - 2\sum_{\alpha,\beta} u_i^{\alpha}u_j^{\alpha} u_i^{\bar{\beta}}u_j^{\bar{\beta}} \Big\}\\
    &= \sum_{i,j} \Big\{ -\sum_{\alpha,\beta} \big(u_i^\alpha u_i^\beta - u_i^{\bar{\alpha}}u_i^{\bar{\beta}} \big) \big(u_j^\alpha u_j^\beta - u_j^{\bar{\alpha}}u_j^{\bar{\beta}} \big) -\sum_{\alpha,\beta} \big(u_i^\alpha u_i^{\bar{\beta}} + u_i^{\bar{\alpha}}u_i^{\beta} \big) \big(u_j^\alpha u_j^{\bar{\beta}} + u_j^{\bar{\alpha}}u_j^{\beta} \big) \Big\}\\
    &= -\sum_{\alpha,\beta} \Big\{ \sum_i \big( u_i^\alpha u_i^\beta - u_i^{\bar{\alpha}}u_i^{\bar{\beta}} \big) \Big\}^2 - \sum_{\alpha,\beta} \Big\{ \sum_i \big(u_i^\alpha u_i^{\bar{\beta}} + u_i^{\bar{\alpha}}u_i^{\beta} \big) \Big\}^2
    = -\frac12\,\big\|[(\rd u)(\rd u)^*,J]\big\|_{\mathrm{HS}}^2.
\end{align}
To see the last equality, set $A\coloneqq (\rd u)(\rd u)^*$ and use the orthonormal frame $\{e_\alpha,e_{\bar\alpha}=Je_\alpha\}$. Then
\[
\<Ae_\alpha,e_\beta\>=\sum_i u_i^\alpha u_i^\beta,\quad \<Ae_{\bar\alpha},e_{\bar\beta}\>=\sum_i u_i^{\bar\alpha}u_i^{\bar\beta},\quad \<Ae_\alpha,e_{\bar\beta}\>=\sum_i u_i^\alpha u_i^{\bar\beta}.
\]
Writing matrices in the ordered basis $(e_1,\dots,e_n,e_{\bar1},\dots,e_{\bar n})$, we have
$J=\begin{psmallmatrix}0&-I\\ I&0\end{psmallmatrix}$ and $A=\begin{psmallmatrix}P&Q\\ Q^\top&R\end{psmallmatrix}$ with
$P_{\alpha\beta}=\sum_i u_i^\alpha u_i^\beta$, $R_{\alpha\beta}=\sum_i u_i^{\bar\alpha}u_i^{\bar\beta}$, $Q_{\alpha\beta}=\sum_i u_i^\alpha u_i^{\bar\beta}$.
A direct block computation gives
$[A,J]=AJ-JA=\begin{psmallmatrix}Q+Q^\top&R-P\\ R-P&-(Q+Q^\top)\end{psmallmatrix}$,
so
\[
\|[A,J]\|_{\mathrm{HS}}^2
=2\sum_{\alpha,\beta}(P_{\alpha\beta}-R_{\alpha\beta})^2+2\sum_{\alpha,\beta}(Q_{\alpha\beta}+Q_{\beta\alpha})^2,
\]
which implies the last equality.
\end{proof}

\begin{remark}
(1) If $v\in\Gamma(u^*T\mathbb{CP}^n)$ is a $\lambda$-eigenvector of $(\rd u)(\rd u)^*$, then \eqref{commutative_dudu_J} implies that $Jv$ is also a $\lambda$-eigenvector. Hence each eigenspace of $(\rd u)(\rd u)^*$ is $J$-invariant and, in a suitable orthonormal basis,
\eq{\label{eq:eigenvalue}
    (\rd u)(\rd u)^* = \mathrm{diag}\{\lambda_1,\lambda_1,\lambda_2,\lambda_2,\dots,\lambda_n,\lambda_n\},
}
with $\lambda_i\ge 0$. In particular, $\mathrm{rank}(\rd u)$ is even at every point.

(2) The commutation relation \eqref{commutative_dudu_J} goes back to Burns--de~Bartolomeis~\cite{Burns-deB88} and Burns--Burstall--de~Bartolomeis--Rawnsley~\cite{BBdBR89} (in the stable case for irreducible compact Hermitian symmetric spaces); see also Chen~\cite{Chen96}.

(3) A map satisfying \eqref{commutative_dudu_J} (equivalently \eqref{eq:eigenvalue}) is called \emph{pseudo horizontally weakly conformal} in \cite{Loubeau97}. If, moreover, all $2n$ eigenvalues coincide at each point, then $u$ is \emph{horizontally weakly conformal} (a strictly stronger condition for $n>1$).
\end{remark}

For $n=1$, \eqref{eq:eigenvalue} reads $(\rd u)(\rd u)^*=\mathrm{diag}\{\lambda_1,\lambda_1\}$, i.e. $u$ is horizontally weakly conformal. Combined with harmonicity, this implies that $u$ is a harmonic morphism; the classification of harmonic morphisms $\S^3\to\S^2$ \cite{Baird-Wood88} then yields Theorem~B (as in \cite{Riviere23}).

For $n>1$, pseudo horizontal weak conformality does \emph{not} imply horizontal weak conformality in general, so $u$ need not be a harmonic morphism (see Appendix). Instead, we will combine \eqref{commutative_dudu_J} with an additional rigidity property to obtain the factorization \eqref{eq0}.

\begin{proposition} \label{prop3.3} 
If ${\rm ind}(u)=2n+2$, then 
\eq{
    (L+2n-1)(J\rd u(X_i)) = 0, \quad\forall\, 1\leq i\leq 2n+2.
}
Moreover, we have
\eq{
    {\rm span}_\R \{\rd u(X_j) \,|\, 1 \leq j \leq 2n+2 \} = {\rm span}_\R \{ J \rd u(X_j)\,|\, 1 \leq j \leq 2n+2\}.
}

\end{proposition}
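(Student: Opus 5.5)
The plan has two parts. First I show that $L+2n-1$ kills each $J\rd u(X_i)$, using the summed quadratic form from the proof of Proposition~\ref{prop3.1}. Then I deduce the equality of spans from the structure of the $-(2n-1)$ eigenspace.

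\textbf{Step 1: the quadratic form vanishes.} By Proposition~\ref{prop3.1}, $[(\rd u)(\rd u)^*,J]=0$, so the claim \eqref{claim} gives $I\equiv 0$ pointwise. Hence by \eqref{eq_sum},
\[
\sum_{i=1}^{2n+2}\int\<(L+2n-1)(J\rd u(X_i)),J\rd u(X_i)\>=0 .
\]
By \eqref{eq0_thm1.1}, $L+2n-1$ is positive semi-definite, so each summand is non-negative. Therefore each summand vanishes:
\[
\int\<(L+2n-1)(J\rd u(X_i)),J\rd u(X_i)\>=0\quad\text{for every } i .
\]

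\textbf{Step 2: from vanishing form to the kernel.} For a self-adjoint, positive semi-definite elliptic operator $P=L+2n-1$, the condition $\int\<Pv,v\>=0$ forces $Pv=0$. To see this, expand $v$ in an $L^2$-orthonormal eigenbasis of $P$: every eigenvalue is $\ge0$, so the form vanishes only if $v$ has no component along positive eigenvalues, i.e.\ $v\in\ker P$. Alternatively, use Cauchy--Schwarz for the semi-definite form, $|\int\<Pv,\phi\>|^2\le \int\<Pv,v\>\int\<P\phi,\phi\>=0$ for all test sections $\phi$. Either way this gives the first assertion, $(L+2n-1)(J\rd u(X_i))=0$.

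\textbf{Step 3: equality of spans.} The first assertion says each $J\rd u(X_i)$ lies in the eigenspace of $L$ for the eigenvalue $-(2n-1)$. In the proof of Proposition~\ref{prop3.1} we saw, using ${\rm ind}(u)=2n+2$, that this is the unique negative eigenvalue and that its eigenspace $E$ is exactly ${\rm span}_\R\{\rd u(X_i)\}$. The argument is that $E$ contains these $2n+2$ sections, which are linearly independent by Lemma~\ref{lem2.1}, and that $\dim E\le{\rm ind}(u)=2n+2$. Hence
\[
{\rm span}_\R\{J\rd u(X_j)\}\subset{\rm span}_\R\{\rd u(X_j)\}.
\]
The map $J$ is injective on sections, so the $2n+2$ sections $J\rd u(X_j)$ are also linearly independent. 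Both spans therefore have dimension $2n+2$, and the inclusion is an equality.

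The step that needs the most care is Step~2. The passage from vanishing quadratic form to lying in the kernel relies on genuine semi-definiteness of $L+2n-1$ on all of $\Gamma(u^*T\mathbb{CP}^n)$, not only on the test sections used. This is exactly what \eqref{eq0_thm1.1} supplies, via the spectral decomposition of the self-adjoint elliptic operator $L$ on a closed manifold.
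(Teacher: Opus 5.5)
Your proposal is correct and follows the paper's route. The summed quadratic form from the proof of Proposition~\ref{prop3.1} vanishes, and each summand is nonnegative because $L+2n-1$ is semi-definite, so each $J\rd u(X_i)$ lies in $\ker(L+2n-1)={\rm span}_\R\{\rd u(X_j)\}$; equality of spans then follows from linear independence and a dimension count. Your Step~2, via Cauchy--Schwarz for a semi-definite symmetric form, just spells out what the paper states in one line.
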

\begin{proof}
From the proof of Proposition~\ref{prop3.1} we see that $-(2n-1)$ is the only negative eigenvalue of $L$, and the eigenspace is spanned by $\{\rd u(X_i) \,|\, 1\leq i\leq 2n+2\}$. Moreover, the proof of Proposition~\ref{prop3.1}
implies that
\eq{
    \int_{\S^{2n+1}} \<(L+2n-1)(J\rd u(X_i)),J\rd u(X_i)\> = 0, \quad\forall\, 1\leq i\leq 2n+2.
}
Since $L+2n-1$ is symmetric and semi-positive definite (see \eqref{eq0_thm1.1}), we have
\eq{
    (L+2n-1)(J\rd u(X_i)) = 0, \quad\forall\, 1\leq i\leq 2n+2.
}
Therefore all $J(\rd u(X_i))$'s are also $-(2n-1)$-eigenvectors of $L$. The  linearly independence of $J(\rd u(X_i))$'s follows from that of $\rd u(X_i)$'s.
Hence the conclusion follows.
\end{proof}

We now prove Theorem~\ref{Theorem_R}.

\begin{proof}[Proof of Theorem~\ref{Theorem_R}]
As a consequence of Proposition~\ref{prop3.3}, there exists a constant $(2n+2)\times(2n+2)$ matrix $A$ such that
\eq{\label{eq:jj}
    J\rd u(X_i) = A_{ij}\rd u(X_j) = \rd u(A_{ij}X_j).
}

\noindent{\bf Step 1.} We show $A^2=-I$, where $I$ is the identity matrix. 
From \eqref{eq:jj} we have 
\eq{
    -\rd u(X_i) = J^2\rd u(X_i) = J\rd u(A_{ij}X_j) = \rd u(A_{ij}A_{jk}X_k),
}
therefore
\eq{
    \rd u(X_i + A_{ij}A_{jk}X_k) = 0.
}
Lemma~\ref{lem2.1} implies
\eq{
    X_i + A_{ij}A_{jk}X_k = 0.
}
Since $A$ is constant, this implies
\eq{\label{eq:0}
    A^2 = -I.
}

\noindent{\bf Step 2.} We prove that $A^*+A=0$.

Fix $x\in\S^{2n+1}$. Extend $\rd u|_x:T_x\S^{2n+1}\to T_{u(x)}\mathbb{CP}^n$ to a linear operator
\eq{\label{def_B_x}
    B_x : \R^{2n+2} \to T_{u(x)}\mathbb{CP}^n
}
by
\eq{
    B_x(y) \coloneqq y_i\rd u|_x(X_i) = \rd u|_x(y_iX_i) = \rd u|_x(y^\top),
}
for $y=y_i\epsilon_i$. In other words,
\eq{\label{eq:0.8}
    B_x = \rd u|_x \circ T,
}
where $T$ is the projection onto the tangent space of $\S^{2n+1}$.

It is clear that $x \in {\rm ker}(B_x)$.
Indeed, we have
\eq{\label{eq:0.9}
    B_x(x) = B_x(x_i\epsilon_i) = x_i\rd u(X_i) = \rd u(x_iX_i) = 0.
}
By definition of $A$, we have for any $y\in\R^{2n+2}$
\eq{
    B_x(A^*y) = (A^*y)_j\rd u(X_j) = A_{ij}y_i\rd u(X_j) = y_iJ\rd u(X_i) = JB_x(y).
}
Hence
\eq{\label{eq:1}
    B_x A^* = J B_x \qquad\hbox{on}\quad \R^{2n+2}.
}
Since $J^*=-J$, taking adjoint gives
\eq{\label{eq:2}
    A B_x^* = -B_x^* J.
}
Note that
\eq{
    B_xB_x^* = (\rd u)_x(\rd u)_x^*.
}
Using \eqref{eq:2} and \eqref{commutative_dudu_J}, we obtain
\eq{
    B_x(A+A^*)B_x^*
    = B_xAB_x^* + B_xA^*B_x^*
    = -B_xB_x^*J + JB_xB_x^*
    = -\big[(\rd u)_x(\rd u)_x^*,J\big]
    = 0.
}
Since ${\rm im}(B_x^*)={\rm ker}(B_x)^\perp$, it follows that
\eq{\label{eq:2.1}
    \<(A+A^*)y,z\>=0, \quad\forall\,y,z\in {\rm ker}(B_x)^\perp.
}

\eqref{eq:0} implies that $(A^*)^2=-I$, hence we may identify $\R^{2n+2}$ with $\C^{n+1}$ by viewing $A^*$ as a complex structure $J_A$.
Denote by $\CP^n_A$ the associated complex projective space, namely
\eq{
    \CP^n_A \coloneqq \mathbb{P}(\R^{2n+2},A^*),
}
Denote the associated quotient map by
\eq{
    \pi_A:\S^{2n+1}\to\CP^n_A.
}
Define
\eq{
    \xi_x \coloneqq (A^*x)^\top = (A^*x)_jX_j.
}
It follows from \eqref{eq:0.9} that
\eq{\label{eq:3}
    \rd u|_x(\xi_x) = (A^*x)_j\rd u|_x(X_j) = B_x(A^*x) = JB_x(x) = 0.
}
Note that the $\pi_A$-fibre at each point $[x]\in\CP^n_A$ is $\S^{2n+1}\cap{\rm span}_\R\{x,A^*x\}$, which is a connected space. Hence \eqref{eq:3} implies that $u$ is constant on each $\pi_A$-fibre. Therefore, $u$ descends to a map $h:\CP^n_A\to\CP^n$, namely $u=h\circ\pi_A$.

We claim that $h$ is holomorphic. In fact, for any $v\in T_x\S^{2n+1}$ we have
\eq{
    J\rd u(v) = v_iJ\rd u(X_i) = v_iA_{ij}\rd u(X_j) = (A^*v)_j\rd u(X_j) = \rd u((A^*v)^\top),
}
and
\eq{
    \rd\pi_A((A^*v)^\top) = J_A\rd\pi_A(v).
}
It follows that
\eq{
    J\circ\rd h\circ\rd\pi_A(v) = J\circ\rd u(v) = \rd u((A^*v)^\top) = \rd h \circ\rd\pi_A((A^*v)^\top) = \rd h \circ J_A\circ\rd\pi_A(v).
}
Since $\rd\pi_A$ is surjective, we see that
\eq{
    J\circ\rd h =\rd h \circ J_A,
}
the claim.

Since $h$ is holomorphic, and $\deg(h)>0$, its complex Jacobian is not identically zero. Hence $h$ must have a full complex rank $n$ on a dense open set. Namely,
\eq{
    \mathcal{R}\coloneqq \{x\in\S^{2n+1} \mid {\rm rank}_\R(\rd u|_x)=2n \}
}
is a dense open set. In view of \eqref{eq:0.8}, we have
\eq{
    {\rm rank}_\R(B_x) = {\rm rank}_\R(\rd u|_x) = 2n, \quad\forall\,x\in\mathcal{R}.
}
Hence
\eq{
    {\rm dim}_\R\ker(B_x)=2, \quad\forall\,x\in\mathcal{R}.
}
By \eqref{eq:0.9} and \eqref{eq:3}, we see that $x,A^*x\in\ker(B_x)$, and hence
\eq{
    \ker(B_x)={\rm span}_\R\{x,A^*x\}, \quad\forall\,x\in\mathcal{R}.
}
Then \eqref{eq:2.1} implies that
\eq{\label{eq:4}
    \<(A+A^*)y,z\>=0, \quad\forall\,y,z\in {\rm span}_\R\{x,A^*x\}^\perp, \quad\forall\,x\in\mathcal{R}.
}
If we denote by $P_x$ the projection onto ${\rm span}_\R\{x,A^*x\}^\perp$, then \eqref{eq:4} becomes
\eq{
    P_x\circ (A+A^*)\circ P_x = 0,\quad\forall\,x\in\mathcal{R}.
}
By continuity, we have
\eq{\label{eq:5}
    P_x\circ (A+A^*)\circ P_x = 0,\quad\forall\,x\in\S^{2n+1}.
}
Finally, for any $y\in\R^{2n+2}$, there exists $0\neq x\in{\rm span}_\R\{y,Ay\}^\perp$. Hence
\eq{
    \<y,x\>=0, \qquad \<y,A^*x\>=\<Ay,x\>=0,
}
which means $y\in{\rm span}_\R\{x,A^*x\}^\perp$. Therefore, $P_x\,y=y$, and \eqref{eq:5} then implies that
\eq{
    \<(A+A^*)y,y\> = \<P_x(A+A^*)P_x\,y,y\> = 0,\quad\forall\,y\in\R^{2n+2}.
}
Hence $A+A^*=0$.

\noindent{\bf Step 3.}
By Steps 1--2, $A^*$ is an orthogonal complex structure on $\R^{2n+2}$. Since ${\rm O}(2n+2)$ acts transitively on the space of orthogonal complex structures, after composing with an isometry of $\S^{2n+1}$ we may assume that $A^*$ is the standard complex structure. Then $\CP^n_A=\CP^n$, and
\eq{
    \pi_A = \pi\circ\Xi, \qquad \Xi\in{\rm Iso}(\S^{2n+1}).
}
Therefore
\eq{
    u= h \circ\pi\circ\Xi.
}
The claim follows.
\end{proof}

Since the isometric transformation $\Xi$ does not change any geometric quantities, in the sequel, we may assume $\Xi$ is the identity map.

\section{Proof of Theorem \ref{thmC.1}}\label{sec4}

We now show that the holomorphic factor $h$ must have degree $1$. Indeed, we prove a much stronger Theorem~\ref{thmC.1}, which gives quantitative lower bounds for the index and nullity of $h\circ\pi$ expressed in terms of the algebraic degree of the holomorphic map $h$.

\subsection{Preliminaries on Sasakian manifolds and the Hopf fibration}
\label{subsec:sasakian_prelim}

We briefly recall the Sasakian geometry used below; see Appendix~\ref{appendix_A} for further background.

Let $(M^{2n+1},g)$ be a Riemannian manifold. A \emph{Sasakian structure} is a pair $(\xi,\Phi)$, where $\xi$ is a unit vector field (the \emph{Reeb field}) and $\Phi$ is a $(1,1)$-tensor, such that
\eq{
    \Phi^2(X)=-X+g(X,\xi)\xi,\qquad g(\Phi X,\Phi Y)=g(X,Y)-g(X,\xi)g(Y,\xi),
}
for all $X,Y\in\Gamma(TM)$ and
\eq{\label{eq:sasakian_def}
    (\nabla_X\Phi)(Y)=-g(X,Y)\xi+g(Y,\xi)X.
}
Set the contact $1$-form $\eta\coloneqq g(\xi,\cdot)$ and the horizontal distribution $\mathcal{H}\coloneqq\ker\eta$. Then $\Phi$ preserves $\mathcal{H}$ and induces an almost complex structure on $\mathcal{H}$. In particular, $(\mathcal{H},\Phi|_{\mathcal{H}},g|_{\mathcal{H}})$ is transversely K\"ahler, with transverse K\"ahler form
\eq{
    \omega^\top\coloneqq \tfrac12\,\rd\eta.
}

In our application, $\S^{2n+1}\subset\C^{n+1}$ carries the standard Sasakian structure induced by the standard complex structure $\mathbf{j}$ on $\C^{n+1}$:
\eq{
    \xi\coloneqq \mathbf{j}x,\qquad \Phi(X)\coloneqq \mathbf{j}X+\<X,\xi\>x,\qquad X\in\Gamma(T\S^{2n+1}).
}
The associated Hopf fibration
\eq{
    \S^1\to\S^{2n+1}\xrightarrow{\pi}\CP^n
}
is a Riemannian submersion with vertical space $\mathrm{span}\{\xi\}$ and horizontal space $\mathcal{H}$. In particular, for each $x\in\S^{2n+1}$ the differential restricts to an isometry
\eq{\label{eq:dp_isometry}
    \rd\pi_x\colon (\mathcal{H}_x,g|_{\mathcal{H}_x}) \longrightarrow (T_{\pi(x)}\CP^n,g_{FS}),
}
so $g|_{\mathcal{H}}=\pi^*g_{FS}$. Moreover, if $J$ denotes the complex structure on $\CP^n$, then
\eq{\label{eq:pi_intertwines}
    \rd\pi\circ\Phi = J\circ\rd\pi \qquad\text{on }\mathcal{H}.
}
Equivalently, $\omega^\top=\pi^*\omega_{FS}$ (with our normalizations). Thus the transverse K\"ahler geometry of $(\S^{2n+1},\xi,\Phi)$ is identified with the K\"ahler geometry of $(\CP^n,g_{FS})$ via $\pi$. For more information of the Sasakian structure and the Hopf map, see Appendix~\ref{appendix_A} below.

\subsection{The complexification}
Let $(M,J)$ be a K\"ahler manifold. Since $J^2=-\rid$, the complexified tangent bundle splits as
\eq{
    TM\otimes\C = T^{1,0}M\oplus T^{0,1}M.
}
Given a real vector field $w$, set
\eq{
    V\coloneqq \tfrac12(w-iJw),\qquad \bar V\coloneqq \tfrac12(w+iJw),
}
so that $w=V+\bar V$ and
\eq{\label{eq:JV}
    JV=iV,\qquad J\bar V=-i\bar V.
}
Throughout, $\<\cdot,\cdot\>$ denotes the complex-bilinear extension of the Riemannian metric (not the Hermitian pairing). In particular,
\eq{
    \<V,V\>=\<\bar V,\bar V\>=0,\qquad \<V,\bar V\>=\abs{V}^2,\qquad V\in\Gamma(T^{1,0}M).
}

The curvature tensor extends complex-multilinearly and satisfies
\eq{\label{eq_curvature_tensor_additional}
    R(X,Y,JZ,JW)=R(X,Y,Z,W),\qquad R(JX,Y,Z,W)=-R(X,JY,Z,W).
}
Consequently, the only potentially nonzero components are of type
\eq{\label{eq_only_non-zero}
    R(A,\bar B,C,\bar D),\qquad A,B,C,D\in\Gamma(T^{1,0}M),
}
up to the curvature symmetries and complex conjugation. For $(\CP^n,g_{FS})$ we use the standard curvature identity
\eq{
    R(X,Y)Z = \<Y,Z\>X - \<X,Z\>Y + \<JY,Z\>JX - \<JX,Z\>JY + 2\<X,JY\>JZ,
}
so, in particular,
\eq{
    R(A,\bar A,B,\bar B)=-2\big(\abs{A}^2\abs{B}^2+\abs{\<A,\bar B\>}^2\big),
    \qquad A,B\in\Gamma(T^{1,0}\CP^n).
}

Fix a local unitary frame $\{W_\alpha\}_{\alpha=1}^n$ of $T^{1,0}\CP^n$ on an open set $U\subset\CP^n$. Over $\pi^{-1}(U)$, let $Z_\alpha\in\Gamma(\mathcal{H}\otimes\C)$ be the \emph{horizontal lift} determined by
\eq{\label{eq:horizontal_lift}
    \rd\pi(Z_\alpha)=W_\alpha,\qquad Z_\alpha\in\mathcal{H}\otimes\C.
}
Then $\{Z_\alpha,\bar Z_\alpha\}_{\alpha=1}^n$ is unitary for $g|_{\mathcal{H}}$ (equivalently for $\pi^*g_{FS}$). Define the associated real orthonormal frame of $\mathcal{H}$ by
\eq{\label{eq:real_frame_from_complex}
    e_\alpha\coloneqq \tfrac{1}{\sqrt2}(Z_\alpha+\bar Z_\alpha),
    \qquad
    \Phi e_\alpha\coloneqq \tfrac{i}{\sqrt2}(Z_\alpha-\bar Z_\alpha).
}
Then $\{e_\alpha,\Phi e_\alpha,\xi\}$ is an adapted orthonormal frame on $\S^{2n+1}$ and
\eq{
    Z_\alpha=\tfrac{1}{\sqrt2}(e_\alpha-i\Phi e_\alpha),\qquad
    \bar Z_\alpha=\tfrac{1}{\sqrt2}(e_\alpha+i\Phi e_\alpha).
}

Now consider $u=h\circ\pi:\S^{2n+1}\to\CP^n$. Since $h$ is holomorphic, we have
\eq{\label{eq:h1}
    J\rd u(X)=\rd u(\Phi X),\qquad X\in\Gamma(T\S^{2n+1}),
}
so $u$ is transversely holomorphic. In particular,
\eq{
    J\rd u(Z_\alpha)=i\rd u(Z_\alpha),\qquad J\rd u(\bar Z_\alpha)=-i\rd u(\bar Z_\alpha),
}
and hence
\eq{
    \rd u(Z_\alpha)\in\Gamma(u^*T^{1,0}\CP^n),
    \qquad
    \rd u(\bar Z_\alpha)\in\Gamma(u^*T^{0,1}\CP^n).
}

\subsection{The complexified second variation}
For a variation field $w\in\Gamma(u^*T\CP^n)$, recall the index form
\eq{
    Q(w,w)=\int_{\S^{2n+1}}\Big(\abs{\nabla w}^2-\sum_{j=1}^{2n+1}R\big(w,\rd u(e_j),w,\rd u(e_j)\big)\Big),
}
where $\{e_j\}_{j=1}^{2n+1}$ is any local orthonormal frame on $\S^{2n+1}$ and $R=R^{\CP^n}$ is the $(0,4)$-tensor
$R(X,Y,Z,W)\coloneqq\<R(X,Y)W,Z\>$.

Complexify $u^*T\CP^n$ and extend $\nabla$ and $R$ complex-multilinearly. For $V\in\Gamma(u^*T^{1,0}\CP^n)$ define
\eq{\label{eq_complexified_second_variation}
    Q(V,\bar V)\coloneqq \int_{\S^{2n+1}}\Big(\<\nabla V,\nabla\bar V\>-\sum_{j=1}^{2n+1}R\big(V,\rd u(e_j),\bar V,\rd u(e_j)\big)\Big).
}

\begin{lemma}
For $V\in\Gamma(u^*T^{1,0}\CP^n)$ and $w\coloneqq V+\bar V$, one has $Q(w,w)=2Q(V,\bar V)$.
\end{lemma}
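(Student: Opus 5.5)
The plan is to expand both terms of $Q(w,w)$ bilinearly, with $w=V+\bar V$, and then use the type decomposition to kill the unwanted pieces.

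\textbf{Gradient term.} Since $\nabla J=0$, the connection preserves $u^*T^{1,0}\CP^n$ and $u^*T^{0,1}\CP^n$. Hence $\nabla V$ is a $T^*\S^{2n+1}\otimes u^*T^{1,0}$-valued form and $\nabla\bar V=\overline{\nabla V}$. The complex-bilinear metric pairs $T^{1,0}$ with $T^{1,0}$ to zero, so $\<\nabla V,\nabla V\>=\<\nabla\bar V,\nabla\bar V\>=0$. This gives
\[
\abs{\nabla w}^2=\<\nabla V+\nabla\bar V,\nabla V+\nabla\bar V\>=2\<\nabla V,\nabla\bar V\>.
\]

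\textbf{Curvature term.} For each $j$ put $Y=\rd u(e_j)$, which is real. By bilinearity,
\[
R(w,Y,w,Y)=R(V,Y,V,Y)+R(\bar V,Y,\bar V,Y)+R(V,Y,\bar V,Y)+R(\bar V,Y,V,Y).
\]
The first identity in \eqref{eq_curvature_tensor_additional}, in pair-symmetric form $R(JX,Y,JZ,W)=R(X,Y,Z,W)$, applied with $X=Z=V$ gives $R(V,Y,V,Y)=i^2R(V,Y,V,Y)$. So this term vanishes, and likewise its conjugate vanishes. By pair symmetry $R(X,Y,Z,W)=R(Z,W,X,Y)$, the two mixed terms coincide. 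Therefore
\[
R(w,Y,w,Y)=2R(V,Y,\bar V,Y).
\]

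Summing over $j$ and integrating yields $Q(w,w)=2Q(V,\bar V)$ with the definition \eqref{eq_complexified_second_variation}. The one step needing care is the $J$-invariance used in the curvature term. I will justify it by combining the two identities in \eqref{eq_curvature_tensor_additional} with the pair symmetry of the Riemann tensor: $R(JX,Y,JZ,W)=R(JZ,W,JX,Y)=R(Z,W,X,Y)=R(X,Y,Z,W)$. Everything else is routine bilinear bookkeeping.
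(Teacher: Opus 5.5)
Your gradient-term argument is fine; you even justify, via $\nabla J=0$, the type preservation the paper uses tacitly. The curvature step, however, has a genuine gap: the identity $R(JX,Y,JZ,W)=R(X,Y,Z,W)$ is false on a K\"ahler manifold. The identities \eqref{eq_curvature_tensor_additional} let you put $J$ on both entries of one antisymmetric pair (slots $3,4$, or slots $1,2$ by pair symmetry). They do not let you put $J$ on one entry of each pair, and your step $R(JZ,W,JX,Y)=R(Z,W,X,Y)$ does exactly that. Indeed, by \eqref{holo_sec_curvature}, $R(JX,X,JX,X)=4\abs{X}^4$, while $R(X,X,X,X)=0$.

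Correspondingly, for $V$ of type $(1,0)$ and real $Y$ one gets $R(V,Y,V,Y)=-4\<V,Y\>^2$. This is nonzero in general; for $Y=w$ it equals $-\abs{w}^4$. So the termwise identity $R(w,Y,w,Y)=2R(V,Y,\bar V,Y)$ fails, with defect $R(V,Y,V,Y)+R(\bar V,Y,\bar V,Y)=2(\<w,JY\>^2-\<w,Y\>^2)$. Summed over $j$ this is $2(\<\rd u,Jw\>^2-\<\rd u,w\>^2)$, the integrand of \eqref{eq1_thm1.1}, which does not vanish for a general map $u$.

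What is missing is the structure of $u=h\circ\pi$, which your argument never uses. That should have been a warning sign, since the lemma is false for general maps into $\CP^n$. The paper takes an adapted frame $\{e_\alpha,\Phi e_\alpha,\xi\}$ and uses $\rd u(\xi)=0$ and $\rd u(\Phi e_\alpha)=J\rd u(e_\alpha)$ from \eqref{eq:h1}. It then treats the $e_\alpha$ and $\Phi e_\alpha$ terms together. With $Y=\rd u(e_\alpha)$, moving $J$ legitimately within pairs gives $R(V,JY,V,JY)=-R(V,Y,V,Y)$. So the pure-type terms cancel within each such pair, not individually.

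Equivalently, $\sum_j\rd u(e_j)\otimes\rd u(e_j)=\sum_\alpha(\rd u(Z_\alpha)\otimes\rd u(\bar Z_\alpha)+\rd u(\bar Z_\alpha)\otimes\rd u(Z_\alpha))$ has type $(1,1)$, precisely because $\rd u(Z_\alpha)$ is of type $(1,0)$. Hence each of $R(V,\rd u(Z_\alpha),V,\rd u(\bar Z_\alpha))$ and $R(V,\rd u(\bar Z_\alpha),V,\rd u(Z_\alpha))$ contains the pure-type pair $(V,\rd u(Z_\alpha))$ and vanishes by \eqref{eq_only_non-zero}. With this replacement, the rest of your bookkeeping (the two mixed terms via pair symmetry) goes through.
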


\begin{proof}
We use the complex-bilinear extension $\<\cdot,\cdot\>$ of the metric. For any $X\in\Gamma(T\S^{2n+1})$,
\eq{
    \abs{\nabla_X w}^2
    =\<\nabla_X(V+\bar V),\nabla_X(V+\bar V)\>
    =2\<\nabla_XV,\nabla_X\bar V\>,
}
since $\<\nabla_XV,\nabla_XV\>=\<\nabla_X\bar V,\nabla_X\bar V\>=0$. Summing over an orthonormal frame gives
\eq{\label{eq:-2}
    \abs{\nabla w}^2=2\<\nabla V,\nabla\bar V\>.
}

For the curvature term, choose an adapted orthonormal frame $\{e_1,\dots,e_{2n},e_{2n+1}=\xi\}$. By \eqref{eq_only_non-zero}, for each $\alpha\in\{1,\dots,n\}$,
\eq{
    &R\big(w,\rd u(e_\alpha),w,\rd u(e_\alpha)\big)+R\big(w,\rd u(\Phi e_\alpha),w,\rd u(\Phi e_\alpha)\big)\\
    &=R\big(w,\rd u(e_\alpha),w,\rd u(e_\alpha)\big)+R\big(Jw,\rd u(e_\alpha),Jw,\rd u(e_\alpha)\big)\\
    &=2\Big\{R\big(V,\rd u(e_\alpha),\bar V,\rd u(e_\alpha)\big)+R\big(V,\rd u(\Phi e_\alpha),\bar V,\rd u(\Phi e_\alpha)\big)\Big\},
}
where we used \eqref{eq:JV}, \eqref{eq_curvature_tensor_additional}, and \eqref{eq:h1}. Since $\rd u(\xi)=0$, we obtain
\eq{\label{eq:-3}
    \sum_{j=1}^{2n+1}R\big(w,\rd u(e_j),w,\rd u(e_j)\big)
    =2\sum_{j=1}^{2n+1}R\big(V,\rd u(e_j),\bar V,\rd u(e_j)\big).
}
Combining \eqref{eq:-2} and \eqref{eq:-3} yields $Q(w,w)=2Q(V,\bar V)$.
\end{proof}

We now introduce the following operators:
\eq{
    \bar{\p}_bV\coloneqq (\nabla_{\bar{Z}_1}V,\cdots,\nabla_{\bar{Z}_n}V), \qquad {\p}_bV\coloneqq (\nabla_{Z_1}V,\cdots,\nabla_{Z_n}V).
}
\begin{lemma}\label{lemC.4}
We have
\eq{
     Q(V,\bar{V}) = \int \Big( \abs{\bar{\p}_bV}^2 + \abs{{\p}_b V}^2 + \abs{\nabla_\xi V}^2 + \sum_{\alpha=1}^n R\big(V,\bar{V},\rd u(Z_\alpha),\rd u(\bar{Z}_\alpha)\big) \Big).
}
\end{lemma}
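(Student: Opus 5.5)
The plan is to expand both terms of \eqref{eq_complexified_second_variation} in the adapted complex frame $\{Z_\alpha,\bar Z_\alpha,\xi\}$, and then use the curvature symmetries to put the curvature term into the stated form.

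\emph{Gradient term.} Since $\{e_\alpha,\Phi e_\alpha,\xi\}$ is a real orthonormal frame, and $e_\alpha=\tfrac1{\sqrt2}(Z_\alpha+\bar Z_\alpha)$, $\Phi e_\alpha=\tfrac{i}{\sqrt2}(Z_\alpha-\bar Z_\alpha)$, we will first check that for each $\alpha$
\[
\<\nabla_{e_\alpha}V,\nabla_{e_\alpha}\bar V\>+\<\nabla_{\Phi e_\alpha}V,\nabla_{\Phi e_\alpha}\bar V\>
=\<\nabla_{Z_\alpha}V,\nabla_{\bar Z_\alpha}\bar V\>+\<\nabla_{\bar Z_\alpha}V,\nabla_{Z_\alpha}\bar V\>.
\]
The cross terms $\<\nabla_{Z_\alpha}V,\nabla_{Z_\alpha}\bar V\>$ cancel between the two summands because of the factor $i^2=-1$. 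Next, $\nabla_{\bar Z_\alpha}\bar V=\overline{\nabla_{Z_\alpha}V}$, and the bilinear pairing of a vector with its conjugate is the Hermitian norm squared. Hence the right-hand side equals $\abs{\nabla_{Z_\alpha}V}^2+\abs{\nabla_{\bar Z_\alpha}V}^2$. Adding the $\xi$ term gives
\[
\<\nabla V,\nabla\bar V\>=\abs{\p_bV}^2+\abs{\bar\p_bV}^2+\abs{\nabla_\xi V}^2.
\]
This step is purely pointwise, so no integration by parts is needed.

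\emph{Curvature term.} Since $\rd u(\xi)=0$, only horizontal directions contribute. The same frame change gives
\[
\sum_j R(V,\rd u(e_j),\bar V,\rd u(e_j))=\sum_\alpha\Big(R(V,\rd u(Z_\alpha),\bar V,\rd u(\bar Z_\alpha))+R(V,\rd u(\bar Z_\alpha),\bar V,\rd u(Z_\alpha))\Big).
\]
We now use type considerations. $V$ and $\rd u(Z_\alpha)$ are of type $(1,0)$, while $\bar V$ and $\rd u(\bar Z_\alpha)$ are of type $(0,1)$. By \eqref{eq_curvature_tensor_additional}, a component $R(\cdot,\cdot,\cdot,\cdot)$ vanishes when its first two slots are both of type $(1,0)$. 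So the first summand $R(V,\rd u(Z_\alpha),\bar V,\rd u(\bar Z_\alpha))$ is zero. The second summand $R(V,\rd u(\bar Z_\alpha),\bar V,\rd u(Z_\alpha))$ survives.

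\emph{Converting via Bianchi.} Apply the first Bianchi identity in the first three slots, with the last slot $\rd u(Z_\alpha)$ fixed:
\[
R(V,\bar W,\bar V,Z)+R(\bar W,\bar V,V,Z)+R(\bar V,V,\bar W,Z)=0,
\]
where $W=\rd u(Z_\alpha)$ and $Z=\rd u(Z_\alpha)$. The middle term has two $(0,1)$ entries in its first pair, so it vanishes. Hence
\[
R(V,\rd u(\bar Z_\alpha),\bar V,\rd u(Z_\alpha))=-R(\bar V,V,\rd u(\bar Z_\alpha),\rd u(Z_\alpha)).
\]
Antisymmetry in the first pair and in the last pair turns this into $-R(V,\bar V,\rd u(Z_\alpha),\rd u(\bar Z_\alpha))$. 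Including the minus sign in front of the curvature term in $Q(V,\bar V)$, this yields $+\sum_\alpha R(V,\bar V,\rd u(Z_\alpha),\rd u(\bar Z_\alpha))$, which is the claimed formula.

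\emph{Main obstacle.} The delicate part is sign bookkeeping: the convention $R(X,Y,Z,W)=\<R(X,Y)W,Z\>$ places the slots in a nonstandard order. I will double-check the final sign against $V=\rd u(Z_\beta)$ for the Hopf map itself, using the explicit value $R(A,\bar A,B,\bar B)=-2(\abs A^2\abs B^2+\abs{\<A,\bar B\>}^2)$. The formula should make the curvature contribution negative there, which is consistent with the negative directions found earlier.
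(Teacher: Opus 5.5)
Your proposal is correct and follows essentially the same route as the paper. You split the gradient term pointwise in the frame $\{Z_\alpha,\bar Z_\alpha,\xi\}$, isolate $R(V,\rd u(\bar Z_\alpha),\bar V,\rd u(Z_\alpha))$ by type, and convert it with the first Bianchi identity; your sign bookkeeping checks out.

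The only difference is cosmetic. You get the two mixed curvature terms by plain bilinear frame change. The paper instead first doubles the $e_\alpha$ term, using $\rd u(\Phi e_\alpha)=J\rd u(e_\alpha)$ and the $J$-invariance of $R$, and only then expands in $Z_\alpha,\bar Z_\alpha$.
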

\begin{proof}
We start from the definition of the complexified index form and split the covariant derivatives into horizontal and vertical parts:
\eq{
    Q(V,\bar{V})
    &= \int \Big( \sum_{\alpha=1}^n \abs{\nabla_{Z_\alpha}V}^2 + \sum_{\alpha=1}^n \abs{\nabla_{\bar{Z}_\alpha}V}^2 + \abs{\nabla_\xi V}^2 - \sum_{j=1}^{2n+1} R\big(V,\rd u(e_j),\bar{V},\rd u(e_j)\big) \Big) \\
    &= \int \Big( \abs{\bar{\p}_bV}^2 + \abs{{\p}_bV}^2 + \abs{\nabla_\xi V}^2 - \sum_{j=1}^{2n+1} R\big(V,\rd u(e_j),\bar{V},\rd u(e_j)\big) \Big).
}
Since $u=h\circ\pi$ is constant along the Reeb direction, $\rd u(\xi)=0$, so only the horizontal directions contribute to the curvature term. Using \eqref{eq_curvature_tensor_additional} and \eqref{eq_only_non-zero}, we compute
\begin{align*}
    \sum_{j=1}^{2n+1} R\big(V,\rd u(e_j),\bar{V},\rd u(e_j)\big)
    &= \sum_{\alpha=1}^n R\big(V,\rd u(e_\alpha),\bar{V},\rd u(e_\alpha)\big)
     + \sum_{\alpha=1}^n R\big(V,\rd u(\Phi e_\alpha),\bar{V},\rd u(\Phi e_\alpha)\big) \\
    &= \sum_{\alpha=1}^n R\big(V,\rd u(e_\alpha),\bar{V},\rd u(e_\alpha)\big)
     + \sum_{\alpha=1}^n R\big(JV,\rd u(e_\alpha),J\bar{V},\rd u(e_\alpha)\big) \\
    &= 2\sum_{\alpha=1}^n R\big(V,\rd u(e_\alpha),\bar{V},\rd u(e_\alpha)\big) \\
    &= \sum_{\alpha=1}^n R\big(V,\rd u(Z_\alpha)+\rd u(\bar Z_\alpha),\bar V,\rd u(Z_\alpha)+\rd u(\bar Z_\alpha)\big) \\
    &= \sum_{\alpha=1}^n R\big(V,\rd u(\bar Z_\alpha),\bar V,\rd u(Z_\alpha)\big),
\end{align*}
where in the last step we used \eqref{eq_only_non-zero} to discard the pure-type terms.

Finally, applying the first Bianchi identity to $(V,\bar V,\rd u(Z_\alpha),\rd u(\bar Z_\alpha))$ and again using \eqref{eq_only_non-zero}, we obtain
\eq{
    R\big(V,\rd u(\bar Z_\alpha),\bar V,\rd u(Z_\alpha)\big) = -R\big(V,\bar V,\rd u(Z_\alpha),\rd u(\bar Z_\alpha)\big).
}
Substituting this identity back into the expression for $Q(V,\bar V)$ yields the claimed formula.
\end{proof}

\begin{lemma}\label{lem:C5}
We have
\eq{
\int \Big(\abs{\bar\p_b V}^2 - \abs{\p_b V}^2 -2ni\<\nabla_\xi V,\bar{V}\> \Big)
=\int \sum_{\alpha=1}^n  R\big(V,\bar{V},\rd u(Z_\alpha),\rd u(\bar{Z}_\alpha)\big).
}
\end{lemma}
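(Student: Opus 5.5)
This is a Weitzenböck-type (Kohn--Rossi) identity. The plan is to integrate by parts on $\S^{2n+1}$ and compare $\abs{\bar\p_b V}^2$ with $\abs{\p_b V}^2$. The difference comes from the commutator $[\nabla_{Z_\alpha},\nabla_{\bar Z_\alpha}]$ acting on sections of $u^*T^{1,0}\CP^n$. By the definition of the curvature of the pull-back connection, this commutator equals
\eq{
    R^{\CP^n}\big(\rd u(Z_\alpha),\rd u(\bar Z_\alpha)\big)V+\nabla_{[Z_\alpha,\bar Z_\alpha]}V.
}
So the proof reduces to identifying the bracket term and the curvature term.

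\textbf{Step 1 (integration by parts).} Since the Hopf fibres are geodesics and the horizontal frame can be chosen well-behaved, the divergence of $Z_\alpha$ and $\bar Z_\alpha$ on $\S^{2n+1}$ vanishes, up to terms that cancel after summing over $\alpha$. To check this, I would compute $\mathrm{div}(Z_\alpha)$ using \eqref{eq:sasakian_def} and the fact that $\pi$ is a Riemannian submersion with totally geodesic fibres. This gives
\eq{
    \int \<\nabla_{\bar Z_\alpha}V,\nabla_{Z_\alpha}\bar V\> = -\int \<\nabla_{Z_\alpha}\nabla_{\bar Z_\alpha}V,\bar V\>,
}
and similarly with the roles of $Z_\alpha$ and $\bar Z_\alpha$ swapped. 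Here $\abs{\nabla_{\bar Z_\alpha}V}^2=\<\nabla_{\bar Z_\alpha}V,\overline{\nabla_{\bar Z_\alpha}V}\>=\<\nabla_{\bar Z_\alpha}V,\nabla_{Z_\alpha}\bar V\>$. Subtracting the two identities yields
\eq{
    \int\big(\abs{\bar\p_bV}^2-\abs{\p_bV}^2\big)=\int\sum_\alpha\<[\nabla_{\bar Z_\alpha},\nabla_{Z_\alpha}]V,\bar V\>.
}
I will have to track the sign and ordering conventions carefully at this point.

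\textbf{Step 2 (bracket).} Next I compute $\sum_\alpha[\bar Z_\alpha,Z_\alpha]$. Its horizontal part is a horizontal lift of $\sum_\alpha[\bar W_\alpha,W_\alpha]$. Working pointwise with a frame $W_\alpha$ that is holomorphic-normal at the point, this horizontal part vanishes at that point; alternatively, it contributes a divergence-type term that integrates to zero. The vertical part is $\eta([\bar Z_\alpha,Z_\alpha])\xi=-\rd\eta(\bar Z_\alpha,Z_\alpha)\xi=-2\omega^\top(\bar Z_\alpha,Z_\alpha)\xi$. With $\Phi Z_\alpha=iZ_\alpha$ this evaluates to $\pm i\,\xi$ for each $\alpha$, giving $\sum_\alpha [\bar Z_\alpha,Z_\alpha]^{\rm vert}=\pm 2ni\,\xi$ after fixing the normalization $\omega^\top=\frac12\rd\eta$ and the unitarity $\<Z_\alpha,\bar Z_\alpha\>=1$. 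This produces the term $2ni\<\nabla_\xi V,\bar V\>$, which is then moved to the left-hand side.

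\textbf{Step 3 (curvature).} Using the $(0,4)$ convention $R(X,Y,Z,W)=\<R(X,Y)W,Z\>$ together with the pair symmetry, I rewrite $\<R(\rd u(\bar Z_\alpha),\rd u(Z_\alpha))V,\bar V\>$ as $R(V,\bar V,\rd u(Z_\alpha),\rd u(\bar Z_\alpha))$. Collecting the terms then gives the lemma.

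The main obstacle is the bookkeeping of signs and factors: the sign of $2ni$, which depends on the orientation of $\xi=\mathbf jx$ versus $\Phi$ and on the factor in $\omega^\top=\frac12\rd\eta$; and the justification that the horizontal bracket and divergence terms integrate to zero. I would verify the sign first on a test case such as $u=\pi$ with $V=\rd u(Z_1)$, or on $n=1$.
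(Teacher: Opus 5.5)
Your route is the paper's: integrate $\abs{\bar\p_bV}^2$ and $\abs{\p_bV}^2$ by parts along $\bar Z_\alpha, Z_\alpha$, commute the covariant derivatives, read off the vertical part of the bracket from $\rd\eta$, and rewrite the curvature term. The gap is in how you dispose of the first-order terms.

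In Step 1 you drop $\int\mathrm{div}(Z_\alpha)\langle\nabla_{\bar Z_\alpha}V,\bar V\rangle$ and $\int\mathrm{div}(\bar Z_\alpha)\langle\nabla_{Z_\alpha}V,\bar V\rangle$ on the grounds that the divergences vanish. They do not. Geodesic fibres only kill the vertical contribution $\langle\nabla_\xi Z_\alpha,\xi\rangle$, so $\mathrm{div}(Z_\alpha)=\pi^*\mathrm{div}_{\CP^n}(W_\alpha)$. For instance, on $\CP^1$ the unitary frame $W=c(1+\abs{w}^2)\p_w$ (with the constant $c$ making it unitary) has $\mathrm{div}\,W=-c\bar w$. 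Likewise, the horizontal part of $\sum_\alpha[\bar Z_\alpha,Z_\alpha]$ does not integrate to zero on its own.

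What is true, and what the paper proves as an auxiliary lemma inside this proof, is that the two discarded contributions cancel each other exactly: $\sum_\alpha[Z_\alpha,\bar Z_\alpha]^h=\sum_\alpha\mathrm{div}(\bar Z_\alpha)Z_\alpha-\sum_\alpha\mathrm{div}(Z_\alpha)\bar Z_\alpha$. This follows from metric compatibility and the Sasakian fact that $(\nabla_X\Phi)Y$ is vertical for horizontal $X,Y$. Hence $(\nabla_{Z_\alpha}\bar Z_\alpha)^h$ has type $(0,1)$ and $(\nabla_{\bar Z_\alpha}Z_\alpha)^h$ has type $(1,0)$. The same identity on $\CP^1$ shows that no choice of frame rescues Step 1: a divergence-free unitary frame would satisfy $[W,\bar W]=0$, giving a commuting orthonormal frame on a surface of curvature $4$. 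As written, your final formula comes out right only because two unjustified omissions compensate.

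Your alternative of using a frame that is normal at a point can be made rigorous, but not by integrating by parts with that frame, which is adapted to one point only. Put $Y_1\coloneqq\sum_\alpha\langle\nabla_{\bar Z_\alpha}V,\bar V\rangle Z_\alpha$ and $Y_2\coloneqq\sum_\alpha\langle\nabla_{Z_\alpha}V,\bar V\rangle\bar Z_\alpha$. These do not depend on the unitary frame, so they are globally defined. The lemma is then the integral of the pointwise identity $\abs{\bar\p_bV}^2-\abs{\p_bV}^2-2ni\langle\nabla_\xi V,\bar V\rangle-\sum_\alpha R(V,\bar V,\rd u(Z_\alpha),\rd u(\bar Z_\alpha))=\mathrm{div}(Y_1-Y_2)$. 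All its terms are frame-independent, so it may be checked at each $p$ in a unitary frame with $\nabla W_\alpha=0$ at $\pi(p)$. There $\mathrm{div}(Z_\alpha)$ and $[Z_\alpha,\bar Z_\alpha]^h$ both vanish.

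Finally, no test case is needed for the sign. For $\xi=\mathbf{j}x$ one has $\nabla_X\xi=\Phi X$, hence $\rd\eta(X,Y)=2\langle\Phi X,Y\rangle$. This gives $\eta([\bar Z_\alpha,Z_\alpha])=-\rd\eta(\bar Z_\alpha,Z_\alpha)=2i$ for each $\alpha$ (not $\pm i$), so $\sum_\alpha[\bar Z_\alpha,Z_\alpha]^{v}=2ni\,\xi$. This contributes exactly $2ni\int\langle\nabla_\xi V,\bar V\rangle$ to the right side of your Step 1 identity, which yields the stated sign.
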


\begin{proof}
We integrate by parts w.r.t. $\{Z_\alpha\}$. Since $\p_bV=(\nabla_{Z_\alpha}V)_\alpha$ and $M$ is closed,
\eq{\label{eq:int-by-parts-C5}
\int |\p_b V|^2
= -\int \sum_{\alpha=1}^n \big\langle \nabla_{\bar Z_\alpha}\nabla_{Z_\alpha}V,\bar V\big\rangle
-\int \sum_{\alpha=1}^n {\rm div}(\bar Z_\alpha)\,\big\langle \nabla_{Z_\alpha}V,\bar V\big\rangle.
}
Similarly, integrating by parts in the $\{\bar Z_\alpha\}$-directions gives
\eq{\label{eq:int-by-partsbar-C5}
\int |\bar\p_b V|^2
= -\int \sum_{\alpha=1}^n \big\langle \nabla_{Z_\alpha}\nabla_{\bar Z_\alpha}V,\bar V\big\rangle
-\int \sum_{\alpha=1}^n {\rm div}(Z_\alpha)\,\big\langle \nabla_{\bar Z_\alpha}V,\bar V\big\rangle.
}
Now take the difference of \eqref{eq:int-by-partsbar-C5} and \eqref{eq:int-by-parts-C5}. Using the commutator identity
$\nabla_{\bar Z_\alpha}\nabla_{Z_\alpha}V-\nabla_{Z_\alpha}\nabla_{\bar Z_\alpha}V=R(\bar Z_\alpha,Z_\alpha)V+\nabla_{[\bar Z_\alpha,Z_\alpha]}V$,
we eliminate the second-derivative terms and obtain
\eq{\label{eq2_lemC.5}
\int |\bar\p_b V|^2
- \int |\p_b V|^2
= \int \sum_{\alpha=1}^n \big\langle R(\bar Z_\alpha,Z_\alpha)V,\bar V\big\rangle
- \int \big\langle \nabla_W V,\bar V\big\rangle,
}
where
\eq{\label{eq:def-W-C5}
W\coloneqq \sum_{\alpha=1}^n [Z_\alpha,\bar Z_\alpha]
+\sum_{\alpha=1}^n {\rm div}(Z_\alpha)\,\bar Z_\alpha
-\sum_{\alpha=1}^n {\rm div}(\bar Z_\alpha)\,Z_\alpha.
}
The curvature term satisfies
$\langle R(\bar Z_\alpha,Z_\alpha)V,\bar V\rangle=R(V,\bar V,\rd u(Z_\alpha),\rd u(\bar Z_\alpha))$ by the definition of the curvature of $u^*TN$.

It remains to identify $W$.

\begin{lemma}\label{lem:divergence-identity-C5}
For any local unitary frame $\{Z_\alpha\}_{\alpha=1}^n$ of $T^{1,0}\mathcal{H}$, one has
\eq{\label{eq:divergence-identity-C5}
\sum_{\alpha=1}^n [Z_\alpha,\bar Z_\alpha]^h
=\sum_{\alpha=1}^n {\rm div}(\bar Z_\alpha)\,Z_\alpha
-\sum_{\alpha=1}^n {\rm div}(Z_\alpha)\,\bar Z_\alpha.
}
\end{lemma}

\begin{proof}
Since $\{Z_\alpha,\bar Z_\alpha\}$ is unitary, every horizontal vector field $X$ decomposes as
$X=\sum_{\beta}\langle X,\bar Z_\beta\rangle Z_\beta+\sum_{\beta}\langle X,Z_\beta\rangle\bar Z_\beta$.
Apply this to $X=\sum_{\alpha}[Z_\alpha,\bar Z_\alpha]^h$.
Using $[Z_\alpha,\bar Z_\alpha]=\nabla_{Z_\alpha}\bar Z_\alpha-\nabla_{\bar Z_\alpha}Z_\alpha$ and metric compatibility, we have
\eq{
    \langle \sum_{\alpha}[Z_\alpha,\bar Z_\alpha],\bar Z_\beta \rangle =-\langle \sum_{\alpha}\nabla_{\bar Z_\alpha}Z_\alpha,\bar Z_\beta \rangle, \qquad \langle \sum_{\alpha}[Z_\alpha,\bar Z_\alpha],Z_\beta \rangle = \langle \sum_{\alpha}\nabla_{Z_\alpha}\bar Z_\alpha,Z_\beta \rangle.
}
By the definition of ${\rm div}$ used above (cf. the computation after \eqref{eq2_lemC.5}), these coefficients equal ${\rm div}(\bar Z_\beta)$ and $-{\rm div}(Z_\beta)$, respectively, and \eqref{eq:divergence-identity-C5} follows.
\end{proof}

For the vertical part, with $\eta=\xi^\flat$ we have
$\eta([Z_\alpha,\bar Z_\alpha])=-\rd\eta(Z_\alpha,\bar Z_\alpha)=-2i$ on a Sasakian manifold, hence
$\eta(W)=\sum_{\alpha}\eta([Z_\alpha,\bar Z_\alpha])=-2ni$.
Therefore $W=-2ni\xi$, and \eqref{eq2_lemC.5} becomes
\eq{
\int \Big(|\bar\p_b V|^2-|\p_b V|^2-2ni\langle \nabla_\xi V,\bar V\rangle\Big)
=\int \sum_{\alpha=1}^n R\big(V,\bar V,\rd u(Z_\alpha),\rd u(\bar Z_\alpha)\big),
}
which is the desired identity.
\end{proof}
\begin{corollary}\label{lemC.7}
We have 
\eq{\label{eq:Q_V}
    Q(V,\bar{V}) = \int \Big( 2\abs{\bar{\p}_bV}^2 + \abs{\nabla_\xi V}^2 - 2ni\<\nabla_\xi V,\bar{V}\> \Big).
}
\end{corollary}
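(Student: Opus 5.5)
The plan is to combine Lemma~\ref{lemC.4} with Lemma~\ref{lem:C5}, eliminating the curvature term. Lemma~\ref{lemC.4} expresses
\[
Q(V,\bar V)=\int\Big(\abs{\bar\p_bV}^2+\abs{\p_bV}^2+\abs{\nabla_\xi V}^2+\sum_{\alpha=1}^n R\big(V,\bar V,\rd u(Z_\alpha),\rd u(\bar Z_\alpha)\big)\Big).
\]
Lemma~\ref{lem:C5} gives an integral identity whose right-hand side is exactly the integrated curvature term. So the first and essentially only step is to substitute
\[
\int\sum_{\alpha}R\big(V,\bar V,\rd u(Z_\alpha),\rd u(\bar Z_\alpha)\big)=\int\Big(\abs{\bar\p_bV}^2-\abs{\p_bV}^2-2ni\<\nabla_\xi V,\bar V\>\Big)
\]
into the formula of Lemma~\ref{lemC.4}.

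After this substitution, the $\abs{\p_bV}^2$ terms cancel. The $\abs{\bar\p_bV}^2$ terms add up to $2\abs{\bar\p_bV}^2$. The term $\abs{\nabla_\xi V}^2$ survives unchanged, and the vertical contribution $-2ni\<\nabla_\xi V,\bar V\>$ is carried over from Lemma~\ref{lem:C5}. This is precisely \eqref{eq:Q_V}.

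The only point requiring care is bookkeeping rather than analysis. Both lemmas must use the same complex-bilinear pairing and the same curvature sign convention $R(X,Y,Z,W)=\<R(X,Y)W,Z\>$. In particular, the quantity $R(V,\bar V,\rd u(Z_\alpha),\rd u(\bar Z_\alpha))$ appearing in Lemma~\ref{lemC.4}, which came from the Bianchi identity, must be literally the same expression that Lemma~\ref{lem:C5} identifies with $\<R(\bar Z_\alpha,Z_\alpha)V,\bar V\>$. I would check this by comparing the two derivations directly.

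As a sanity check, one can verify that the resulting expression is real. Integrating $\xi\<V,\bar V\>$, and using that $\xi$ is Killing and hence divergence free, shows that $\int\<\nabla_\xi V,\bar V\>$ is purely imaginary. Therefore $-2ni\int\<\nabla_\xi V,\bar V\>$ is real, consistent with $Q(V,\bar V)=\tfrac12 Q(w,w)\in\R$.
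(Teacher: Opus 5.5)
Your proposal is correct and is exactly how the paper obtains the corollary: substituting the integral identity of Lemma~\ref{lem:C5} for the curvature term in Lemma~\ref{lemC.4} cancels the $\abs{\p_bV}^2$ terms and doubles the $\abs{\bar\p_bV}^2$ term, which gives the stated formula. Your additional reality check, that $\int\<\nabla_\xi V,\bar V\>$ is purely imaginary because $\xi$ is divergence free, is also correct, although the paper does not use it.
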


It is noteworthy that this expression contains no explicit curvature term.
In particular, if  $\nabla_\xi V=0$ (i.e. $V$ is basic), then $Q(V,\bar V)\ge 0$. Thus $u$ is stable under basic (horizontal) variations; equivalently, the holomorphic map $h:\CP^n\to\CP^n$ is stable (indeed, it minimizes energy in its homotopy class). Consequently, any unstable directions for $u$ must come from variations with a nontrivial vertical component.

\begin{corollary}\label{corC.5}
If $V\in{\rm ker}(\bar{\p}_b)$, then
\eq{
    Q(V,\bar{V}) = \int \Big( \abs{\nabla_\xi V}^2 - 2ni\<\nabla_\xi V,\bar{V}\> \Big). 
}
\end{corollary}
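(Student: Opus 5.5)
This is immediate from Corollary~\ref{lemC.7}. By \eqref{eq:Q_V}, for any $V\in\Gamma(u^*T^{1,0}\CP^n)$ we have
\[
Q(V,\bar V)=\int\Big(2\abs{\bar\p_bV}^2+\abs{\nabla_\xi V}^2-2ni\<\nabla_\xi V,\bar V\>\Big).
\]
The hypothesis $V\in\ker(\bar\p_b)$ means $\nabla_{\bar Z_\alpha}V=0$ for every $\alpha=1,\dots,n$. Hence $\abs{\bar\p_bV}^2=\sum_\alpha\<\nabla_{\bar Z_\alpha}V,\overline{\nabla_{\bar Z_\alpha}V}\>$ vanishes pointwise, and substituting this gives exactly the claimed formula.

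The only point to check is that the statement is frame-independent, so that it makes sense globally. A change of local unitary frame $Z_\alpha\mapsto U_{\alpha\beta}Z_\beta$, with $U$ unitary, transforms $\bar Z_\alpha$ by $\bar U$. Therefore the condition $\nabla_{\bar Z_\alpha}V=0$ for all $\alpha$ does not depend on the frame. Likewise $\abs{\bar\p_bV}^2$ is frame-invariant, so Corollary~\ref{lemC.7}, proved with local frames, holds globally and can be applied as is.

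For orientation, it is worth noting why this will be useful. If we further write $\nabla_\xi V=i\mu V$ with $\mu$ real, as for Fourier modes along the Hopf fibres, the integrand becomes $(\mu^2+2n\mu)\abs{V}^2$. This is negative exactly when $-2n<\mu<0$, which is where index contributions will come from. That analysis belongs to the subsequent argument, though, not to this corollary.
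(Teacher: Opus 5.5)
Your proposal is correct and matches the paper: the corollary is stated without a separate proof, as the specialization of \eqref{eq:Q_V} to the case $\bar\p_b V=0$, which is exactly your substitution. Your frame-independence check and the remark on the weight-$\mu$ modes are sound extras, the latter matching the paper's subsequent $m(m+2n)$ computation.
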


\subsection{Decomposition in terms of eigenspaces of \texorpdfstring{$-i\nabla_\xi$}{-i nabla xi}}

The following lemma is well known, see \cite{Lee88} for the scalar function case, and \cite{CHT19} for the bundle case. We give a proof here for the reader's convenience.

\begin{lemma}\label{lem:commute-xi-kohn}
Let $u:\S^{2n+1} \to \CP^n$ with $\rd u(\xi)=0$ (i.e. $u$ is constant along the Reeb direction). Let $\Box_b\coloneqq \bar\p_b^*\bar\p_b$ be the (bundle-valued) Kohn Laplacian associated with the pullback connection on $u^*T^{1,0}\CP^n$. Then
\eq{\label{eq:boxb-xi-commute}
    [\Box_b,\nabla_\xi]=0.
}
\end{lemma}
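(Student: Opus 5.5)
Since $\Box_b=\bar\p_b^*\bar\p_b$, the natural route is to show separately that $\nabla_\xi$ commutes with $\bar\p_b$ and with its formal adjoint $\bar\p_b^*$. Both facts reduce to two ingredients. The first is that $\xi$ is a Killing field whose flow (the Hopf $\S^1$-action) preserves the Sasakian structure $(\xi,\Phi,\eta)$. The second is that the curvature of the pullback connection on $u^*T^{1,0}\CP^n$ vanishes whenever one slot is $\xi$, because $\rd u(\xi)=0$: for any $X$ one has $R^{u^*TN}(\xi,X)V=R^N(\rd u(\xi),\rd u(X))V=0$.

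\textbf{Step 1.} We use a frame adapted to the fibration. Since $\{Z_\alpha\}$ is defined as the horizontal lift of a unitary frame $\{W_\alpha\}$ on $\CP^n$, it is invariant under the Hopf flow, so $[\xi,Z_\alpha]=[\xi,\bar Z_\alpha]=0$. This holds because $\mathcal L_\xi\Phi=0$ and $\mathcal L_\xi\eta=0$ on the standard Sasakian sphere, and projectable horizontal lifts commute with the vertical field.

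\textbf{Step 2.} We show $[\nabla_\xi,\nabla_{\bar Z_\alpha}]V=0$. The commutator formula gives
\[
\nabla_\xi\nabla_{\bar Z_\alpha}V-\nabla_{\bar Z_\alpha}\nabla_\xi V=R^{u^*TN}(\xi,\bar Z_\alpha)V+\nabla_{[\xi,\bar Z_\alpha]}V .
\]
The curvature term vanishes because $\rd u(\xi)=0$, and the bracket term vanishes by Step 1. Hence $\nabla_\xi\bar\p_b V=\bar\p_b\nabla_\xi V$, where $\nabla_\xi$ acts componentwise on the $n$-tuple in the invariant frame.

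\textbf{Step 3.} We pass to the adjoint. The operator $\nabla_\xi$ is skew-adjoint on $L^2$ because ${\rm div}\,\xi=0$ ($\xi$ is Killing) and $\nabla$ is metric. This holds both on sections $V$ and on $n$-tuples written in the invariant frame, where the Hermitian pairing is frame-wise. Taking adjoints of $\nabla_\xi\bar\p_b=\bar\p_b\nabla_\xi$ gives $\bar\p_b^*\nabla_\xi=\nabla_\xi\bar\p_b^*$. Alternatively, one can write $\bar\p_b^*(\Psi_\alpha)=-\sum_\alpha(\nabla_{Z_\alpha}\Psi_\alpha+{\rm div}(Z_\alpha)\Psi_\alpha)$ and use that ${\rm div}(Z_\alpha)$ is $\xi$-invariant, since the frame and the volume form are invariant. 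Combining this with Step 2 yields $[\Box_b,\nabla_\xi]=0$.

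\textbf{Main obstacle.} The delicate point is Step 1 together with the frame-independence issue. A unitary frame of $\CP^n$ exists only locally, so the invariance argument must be carried out over $\pi^{-1}(U)$. One must also check that $\Box_b$ does not depend on the chosen frame; this follows because $\bar\p_b V$ is intrinsically the $(0,1)$-part of $\nabla V$ restricted to $\mathcal H$. Finally, one must verify that $[\xi,Z_\alpha]$ has no vertical component. This holds because $\eta([\xi,Z_\alpha])=-\rd\eta(\xi,Z_\alpha)=0$, using $\xi\lrcorner\,\rd\eta=0$. If one prefers to avoid frames, the same conclusion follows by pulling back along the isometric flow $\varphi_t$ of $\xi$: $u\circ\varphi_t=u$ and $\varphi_t$ preserves $\mathcal H$, $\Phi$ and $g$, so $\Box_b$ is $\varphi_t$-equivariant, and differentiating at $t=0$ gives the commutation.
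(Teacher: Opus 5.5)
Your proof is correct, but it reaches the commutation by a different mechanism than the paper. The paper uses no invariant frame. It lets $\nabla_\xi$ act on $\bar\p_b V$ via the Levi-Civita connection on the form factor (coupled with the pullback connection) and evaluates on an arbitrary $\bar Z\in T^{0,1}\mathcal H$. Torsion-freeness then collapses the bracket terms into $-\nabla_{\nabla_{\bar Z}\xi}V$. The Sasakian identity $\nabla_{\bar Z}\xi=\Phi\bar Z=-i\bar Z$ gives $[\nabla_\xi,\bar\p_b]=i\bar\p_b$, hence $[\nabla_\xi,\bar\p_b^*]=-i\bar\p_b^*$ by adjunction, and the two shifts cancel in $\bar\p_b^*\bar\p_b$.

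You instead extend $\nabla_\xi$ to the form factor by the Lie derivative, i.e.\ componentwise in a Hopf-invariant frame. Since $(\nabla_\xi-\mathcal L_\xi)\theta=i\theta$ for $\theta\in(T^{0,1}\mathcal H)^*$, your operator on forms is exactly the paper's minus $i$. That is why you get exact commutation with $\bar\p_b$ and with $\bar\p_b^*$ separately, where the paper has $\pm i$ shifts.

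The paper's computation is pointwise and tensorial in $\bar Z$, and needs only $\nabla\xi=\Phi$ and $\rd u(\xi)=0$. Yours needs $[\xi,Z_\alpha]=0$. For the adjoint step it also needs the componentwise operator to be globally well defined: transition matrices between Hopf-invariant unitary frames must be basic, i.e.\ the Reeb flow preserves $T^{0,1}\mathcal H$. Your flow-equivariance remark settles this. In exchange, your argument makes the underlying reason transparent: the Reeb flow preserves $g$, $\mathcal H$, $\Phi$ and $u$ itself. So $\Box_b$ is equivariant under the flow and therefore commutes with its generator $\nabla_\xi$.
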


\begin{proof}
Fix $\bar Z\in T^{0,1}\mathcal{H}$. Using the definition $(\bar\p_bV)(\bar Z)=\nabla_{\bar Z}V$ and the general commutator identity for a connection,
\eq{
\nabla_\xi\nabla_{\bar Z}V-\nabla_{\bar Z}\nabla_\xi V
=\nabla_{[\xi,\bar Z]}V + R^{u^*T^{1,0}\CP^n}(\xi,\bar Z)V,
}
we obtain
\eq{\label{eq:xi-bar-pb-comm}
\big(\nabla_\xi(\bar\p_bV)-\bar\p_b(\nabla_\xi V)\big)(\bar Z)
=\nabla_{[\xi,\bar Z]-\nabla_\xi\bar Z}V + R^{u^*T^{1,0}\CP^n}(\xi,\bar Z)V.
}
Since the pullback curvature satisfies
$R^{u^*T^{1,0}\CP^n}(\xi,\bar Z)V=R^{\CP^n}(\rd u(\xi),\rd u(\bar Z))V=0$ by $\rd u(\xi)=0$, and since $\nabla_\xi\bar Z=[\xi,\bar Z]+\nabla_{\bar Z}\xi$ with $\nabla_{\bar Z}\xi=-i\bar Z$ on a Sasakian manifold, \eqref{eq:xi-bar-pb-comm} becomes
$$
\big(\nabla_\xi\bar\p_b-\bar\p_b\nabla_\xi\big)V=i\,\bar\p_bV.
$$
Taking $L^2$-adjoint yields $[\nabla_\xi,\bar\p_b^*]=-i\bar\p_b^*$, hence
$$
[\nabla_\xi,\Box_b]=[\nabla_\xi,\bar\p_b^*\bar\p_b]=[\nabla_\xi,\bar\p_b^*]\,\bar\p_b+\bar\p_b^*\,[\nabla_\xi,\bar\p_b]=0,
$$
which proves \eqref{eq:boxb-xi-commute}.
\end{proof}

Since $\Box_b$ commutes with $\nabla_\xi$, we may decompose $\Gamma(u^*T^{1,0}\CP^n)$ into eigenspaces of the self-adjoint operator $-i\nabla_\xi$ and write
\eq{
    Q(V,\bar{V}) = \sum_{m\in\mathbb{Z}} Q(V_m,\bar{V}_m),
}
where $V_m\in E_m$ satisfies
\eq{
    \nabla_\xi V_m = im\,V_m.
}
Hence it suffices to analyze $Q$ on each $E_m$ separately. If $V\in\ker(\bar\p_b)\cap E_m$, then Corollary~\ref{corC.5} gives
\eq{
    Q(V,\bar{V}) = \int m(m+2n)\abs{V}^2.
}
In particular, $Q$ is negative precisely for
\eq{
    -2n+1\leq m\leq -1.
}
This yields the following lower bounds.
\begin{proposition}\label{propC.6}
Under the assumption of Theorem~\ref{thmC.1}, we have
\eq{
    {\rm ind}(u) \ge \sum_{m=-(2n-1)}^{-1} {\rm dim}_{\R} ({\rm ker}(\bar{\p}_b)\cap E_m)
}
and
\eq{
    {\rm nul}(u) \ge {\rm dim}_{\R} ({\rm ker}(\bar{\p}_b)\cap E_0) + {\rm dim}_{\R} ({\rm ker}(\bar{\p}_b)\cap E_{-2n}).
}
\end{proposition}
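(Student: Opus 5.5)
The plan is to turn each space $\ker(\bar\p_b)\cap E_m$ into a space of real variation fields on which the real index form has a definite sign, and then apply the usual min-max characterization of index and nullity.

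Take $V\in\ker(\bar\p_b)\cap E_m$ and set $w=V+\bar V\in\Gamma(u^*T\CP^n)$. The lemma before Lemma~\ref{lemC.4} gives $Q(w,w)=2Q(V,\bar V)$. By Corollary~\ref{corC.5} together with $\nabla_\xi V=imV$ we have $\<\nabla_\xi V,\overline{\nabla_\xi V}\>=m^2\abs{V}^2$ and $-2ni\<imV,\bar V\>=2nm\abs{V}^2$. Hence $Q(V,\bar V)=m(m+2n)\int\abs{V}^2$, which is negative for $-2n+1\le m\le -1$ and zero for $m\in\{0,-2n\}$.

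Next I would show that the real map $V\mapsto V+\bar V$ is injective; it is, because the $T^{1,0}$-component of $w$ recovers $V$. So each $\ker(\bar\p_b)\cap E_m$, viewed as a real vector space, embeds into $\Gamma(u^*T\CP^n)$.

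The main step is negative definiteness on the direct sum $\bigoplus_{m=-(2n-1)}^{-1}(\ker\bar\p_b\cap E_m)$, not just on each summand. Here I would polarize the complexified form to the Hermitian form $Q(V,\bar U)$, using the same first-order expression as in Corollary~\ref{lemC.7}. Lemma~\ref{lem:commute-xi-kohn} and the self-adjointness of $-i\nabla_\xi$ make different eigenspaces $L^2$-orthogonal. In the expression of Corollary~\ref{lemC.7}, every term is built from $\bar\p_b$ (which vanishes on $\ker\bar\p_b$) and from $\nabla_\xi$ paired with conjugates. So the cross terms $Q(V_m,\bar U_{m'})$ with $m\neq m'$ reduce to multiples of $\int\<V_m,\bar U_{m'}\>=0$. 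The curvature terms were already eliminated, so they cause no extra cross terms.

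One point needs care. The real form $Q(w,w')$ for $w=V+\bar V$ and $w'=U+\bar U$ equals $2\,\mathrm{Re}\,Q(V,\bar U)$, provided the $(V,U)$-type terms vanish. I expect them to vanish because $\<V,U\>=0$ for $V,U$ both of type $(1,0)$ and the curvature has only mixed type by \eqref{eq_only_non-zero}. I would verify this exactly as in the lemma preceding Lemma~\ref{lemC.4}.

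With orthogonality in hand, $Q$ is negative definite on the image of the direct sum, which gives the index bound. In the same way, $Q$ vanishes identically on the image of $(\ker\bar\p_b\cap E_0)\oplus(\ker\bar\p_b\cap E_{-2n})$, and this subspace is $Q$-orthogonal to the other $E_m$.

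To pass from "$Q\equiv0$ on the subspace" to Jacobi fields, I would argue as follows. For $w$ in this null subspace, the quantity $Q(w+tz,w+tz)$ must be analysed for arbitrary $z$. A cleaner route is to show directly that $Lw=0$. For this I would use the second-variation structure $Q(V,\bar U)=\int\<(2\Box_b+(-i\nabla_\xi)(-i\nabla_\xi+2n))V,\bar U\>$, which follows by integrating Corollary~\ref{lemC.7} by parts. On $\ker\bar\p_b\cap E_m$ with $m\in\{0,-2n\}$, the operator $2\Box_b+(-i\nabla_\xi)(-i\nabla_\xi+2n)$ annihilates $V$. Hence $V$ lies in the kernel of the complexified Jacobi operator, and so does $w$, giving the nullity bound.

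The hardest step is making this operator identity rigorous. Specifically, I need that $L$ restricted to $(1,0)$-fields equals $2\Box_b+(-i\nabla_\xi)(-i\nabla_\xi+2n)$ with no $(0,1)$ leakage. I would check this by polarizing Corollary~\ref{lemC.7} in both arguments and confirming that all mixed-type contributions cancel.
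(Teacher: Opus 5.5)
Your proposal is correct and follows the paper's route. You split into the weight spaces $E_m$ of $-i\nabla_\xi$, use Corollary~\ref{corC.5} to get $Q(V,\bar V)=m(m+2n)\int\abs{V}^2$ on $\ker(\bar\p_b)\cap E_m$, and kill cross terms between distinct weights by $L^2$-orthogonality. Your identity $L(V+\bar V)=PV+\overline{PV}$, with $P=2\Box_b+(-i\nabla_\xi)(-i\nabla_\xi+2n)$, usefully makes explicit the passage from $Q$-null vectors to Jacobi fields, which the paper leaves implicit.

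The step you flag as hardest is immediate. The map $V\mapsto V+\bar V$ is an $\R$-linear isomorphism onto $\Gamma(u^*T\CP^n)$, and $Q(w,w)=2Q(V,\bar V)$ holds for every $V$. Polarizing gives $\int\<Lw,U+\bar U\>=2\,\mathrm{Re}\int\<PV,\bar U\>$ for all $U$. Replacing $U$ by $iU$ then forces $(Lw)^{1,0}=PV$, so no separate type-checking is needed.
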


\subsection{Dimension counting, Proof of Theorem \ref{thmC.1}}\label{sec4.5}
It remains to compute $\dim_\R(\ker(\bar\p_b)\cap E_m)$ for $m\in\{-2n,-2n+1,\dots,0\}$. Denote
\eq{
    \mathcal{P}_k\coloneqq\{\hbox{holomorphic homogeneous polynomials of algebraic degree $k$ on $\C^{n+1}$}\}.
}
In this subsection, all parameters of curves in projective space are complex, and derivatives of such curves denote their holomorphic differentials, taking values in the $(1,0)$-tangent bundle.

We start with a standard description of the tangent space of projective space
\eq{\label{eq_tan_CP^n}
    T_{[z]}^{1,0}\mathbb{CP}^n \cong \C^{n+1}/\C z.
}
Indeed, for $[z]\in\CP^n$ with $z\in\C^{n+1}\backslash\{0\}$ and $\mathbf{b}\in\C^{n+1}$, the curve $t\mapsto[z+t\mathbf{b}]$ is well defined for small $t$, and differentiating at $t=0$ produces a tangent vector at $[z]$:
\eq{
    [z+t\mathbf{b}]\in\mathbb{CP}^n.
}
Define
\eq{
    V_{\mathbf{b}} \coloneqq \frac{\rd}{\rd t}\Big|_{t=0} [z+t\mathbf{b}] \in T_{[z]}^{1,0}\mathbb{CP}^n.
}
If $\<\mathbf{b}\>$ denotes the class of $\mathbf{b}$ in $\C^{n+1}/\C z$, then $V_{\mathbf{b}}$ depends only on $\<\mathbf{b}\>$: for any $\lambda\in\C$,
\eq{
    V_{\mathbf{b}+\lambda z}
    &= \frac{\rd}{\rd t}\Big|_{t=0} [z+t(\mathbf{b}+\lambda z)]
     = \frac{\rd}{\rd t}\Big|_{t=0} [(1+\lambda t)z+t\mathbf{b}] \\
    &= \frac{\rd}{\rd t}\Big|_{t=0} \Big[z+\frac{t}{1+\lambda t}\mathbf{b}\Big]
     = \frac{\rd}{\rd s}\Big|_{s=0} [z+s\mathbf{b}] \cdot \frac{\rd}{\rd t}\Big|_{t=0} \frac{t}{1+\lambda t}
     = V_{\mathbf{b}}.
}
Thus we obtain the identification
\eq{
    I_z : \C^{n+1}/\C z \overset{\cong}{\to} T_{[z]}^{1,0}\CP^n, \quad \<\mathbf{b}\> \mapsto V_{\mathbf{b}}.
}

Now assume $u=h\circ\pi$ as in Theorem~\ref{thmC.1}, with algebraic degree $d$. Writing, for $z\in\S^{2n+1}\subset\C^{n+1}$,
\eq{
    u(z)=h([z])=[p_0(z):\cdots:p_n(z)]\eqcolon[\mathbf{p}(z)], \quad p_\alpha\in\mathcal{P}_d
}
Therefore, given any $\mathbf{b}\in C^\infty(\S^{2n+1},\C^{n+1})$, we have
\eq{
    I_{\mathbf{p}(z)} : \C^{n+1}/\C \mathbf{p}(z) \cong T_{\mathbf{p}(z)]}^{1,0}\CP^n = T_{u(z)}^{1,0}\CP^n, \quad \<\mathbf{b}(z)\> \mapsto V_{\mathbf{b}}(z),
}
where $\mathbf{b}(z)\sim \mathbf{b}(z)+\lambda(z)\mathbf{p}(z)$. In particular,
\eq{
    V_{\mathbf{b}}(z)\coloneqq \frac{\rd}{\rd t}\Big|_{t=0} [\mathbf{p}(z)+t\mathbf{b}(z)] = I_{\mathbf{p}(z)}\big(\<\mathbf{b}(z)\>\big) \in \Gamma(u^*T^{1,0}\CP^n)
}
is an admissible complex variation field with associated real variation field $V_{\mathbf{b}}+\bar{V}_{\mathbf{b}}$. Since we only consider $u=h\circ\pi$ in this section, we omit the subscript $h$ in the notation.

\begin{remark}
The field $V_{\mathbf{b}}$ is smooth. Indeed, in a chart $(w_1,\dots,w_n)$ on $\CP^n$ (for example $w_\alpha(z)=z_\alpha/z_0$ on $\{z_0\neq 0\}$),
\eq{
    w_\alpha([\mathbf{p}(z)+t\mathbf{b}(z)]) = \frac{p_\alpha(z)+tb_\alpha(z)}{p_0(z)+tb_0(z)}
}
and
\eq{\label{eq_holo_derivative}
    \frac{\rd}{\rd t}\Big|_{t=0}w_\alpha([\mathbf{p}(z)+t\mathbf{b}(z)])
    = \frac{p_0(z)b_\alpha(z)-p_\alpha(z)b_0(z)}{p_0(z)^2}\in C^\infty.
}
Hence
\eq{
    V_{\mathbf{b}}(z) = \frac{\rd}{\rd t}\Big|_{t=0} [\mathbf{p}(z)+t\mathbf{b}(z)]
    = \frac{\rd}{\rd t}\Big|_{t=0}w_\alpha([\mathbf{p}(z)+t\mathbf{b}(z)])\,\frac{\p}{\p w_\alpha}\Big|_{u(z)}
}
is smooth.
\end{remark}

\begin{lemma}\label{lem_V_B}
If $\mathbf{b}\in(\mathcal{P}_k)^{n+1}$, then $V_{\mathbf{b}}\in {\rm ker}(\bar{\p}_b)\cap E_{k-d}$.
\end{lemma}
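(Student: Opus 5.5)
The plan is to verify the two properties of $V_{\mathbf b}$ separately. Both reduce to statements about holomorphic curves in $\CP^n$, via the identification $I_{\mathbf p(z)}$ and the chart formula \eqref{eq_holo_derivative}.

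\emph{Transverse holomorphicity.} First I will show $\nabla_{\bar Z}V_{\mathbf b}=0$ for every $\bar Z\in T^{0,1}\mathcal H$. Work in the affine chart $w_\alpha=z_\alpha/z_0$ near $u(z)$. There
$$V_{\mathbf b}=f_\alpha\,\frac{\p}{\p w_\alpha}\circ u,\qquad f_\alpha=\frac{p_0b_\alpha-p_\alpha b_0}{p_0^2}.$$
The functions $f_\alpha$ are restrictions to $\S^{2n+1}$ of holomorphic functions on an open subset of $\C^{n+1}$: a quotient of homogeneous polynomials of degrees $d+k$ and $2d$. The pullback connection gives
$$\nabla_{\bar Z}V_{\mathbf b}=\bar Z(f_\alpha)\,\frac{\p}{\p w_\alpha}+f_\alpha\,\nabla_{\rd u(\bar Z)}\frac{\p}{\p w_\alpha}.$$
The second term vanishes for two reasons. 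By \eqref{eq:h1}, $\rd u(\bar Z)\in T^{0,1}\CP^n$. On a Kähler manifold the Chern and Levi-Civita connections agree, so $\nabla_{\bar W}\frac{\p}{\p w_\alpha}=0$ for holomorphic coordinate fields. For the first term, write $\bar Z=\tfrac1{\sqrt2}(e+i\Phi e)$ with $e\in\mathcal H$. Since $\Phi=\mathbf j$ on $\mathcal H$, $\bar Z$ is the restriction of an ambient $(0,1)$-vector on $\C^{n+1}$ tangent to the sphere. Hence $\bar Z(F|_{\S^{2n+1}})=0$ for any holomorphic $F$. So $\bar\p_bV_{\mathbf b}=0$.

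\emph{Eigenvalue of $-i\nabla_\xi$.} Next I will compute $\nabla_\xi V_{\mathbf b}$. Here $\xi=\mathbf jx$ generates the circle action $z\mapsto e^{i\theta}z$. Since $\rd u(\xi)=0$, the map $u$ is constant along the orbit $\theta\mapsto e^{i\theta}z$. Along such a curve, $\nabla_\xi$ of a section of $u^*T\CP^n$ is therefore just the ordinary $\theta$-derivative of a curve in the fixed vector space $T_{u(z)}\CP^n$. By homogeneity,
$$V_{\mathbf b}(e^{i\theta}z)=\frac{\rd}{\rd t}\Big|_{t=0}\big[e^{id\theta}\mathbf p(z)+te^{ik\theta}\mathbf b(z)\big]=\frac{\rd}{\rd t}\Big|_{t=0}\big[\mathbf p(z)+te^{i(k-d)\theta}\mathbf b(z)\big].$$
Because $[\,\cdot\,]$ is holomorphic in $t$, this equals $e^{i(k-d)\theta}V_{\mathbf b}(z)$. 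The chart formula gives the same result directly, since $f_\alpha$ is homogeneous of degree $k-d$. Differentiating at $\theta=0$ yields $\nabla_\xi V_{\mathbf b}=i(k-d)V_{\mathbf b}$, that is, $V_{\mathbf b}\in E_{k-d}$.

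The main obstacle is justifying cleanly that $\nabla_\xi$ reduces to a $\theta$-derivative, and that $\bar Z$ annihilates the $f_\alpha$ while the connection term drops out. The first follows because the pullback bundle is trivial along a fibre where $u$ is constant. The second requires care with sign conventions. One must check that $\Phi e=\mathbf je$ on $\mathcal H$, so that $\bar Z$ is ambient antiholomorphic and not holomorphic. This is consistent with \eqref{eq:pi_intertwines} and with $\rd u(Z_\alpha)$ being of type $(1,0)$.
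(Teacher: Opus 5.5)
Your proposal is correct and follows the same route as the paper. Transverse holomorphicity comes from the chart formula \eqref{eq_holo_derivative}, and membership in $E_{k-d}$ comes from homogeneity of $\mathbf{p}$ and $\mathbf{b}$ under the Hopf action $z\mapsto e^{i\theta}z$. You also fill in details the paper leaves implicit: the connection term vanishes because $\rd u(\bar Z)\in T^{0,1}$ and the metric is K\"ahler, $\bar Z$ is an ambient $(0,1)$-vector, and $\nabla_\xi$ reduces to a $\theta$-derivative because $u$ is constant on the fibres.
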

\begin{proof}
The inclusion $V_{\mathbf{b}}\in\ker(\bar\p_b)$ follows immediately from \eqref{eq_holo_derivative}. It remains to show $V_{\mathbf{b}}\in E_{k-d}$. Let $\theta\mapsto e^{i\theta}z$ be the Hopf $\S^1$-action. Homogeneity gives
\eq{
    \mathbf{p}(e^{i\theta}z) = e^{id\theta}\mathbf{p}(z), \qquad \mathbf{b}(e^{i\theta}z) = e^{ik\theta}\mathbf{b}(z).
}
Therefore,
\eq{
    V_{\mathbf{b}}(e^{i\theta}z)
    &= \frac{\rd}{\rd t}\Big|_{t=0} [\mathbf{p}(e^{i\theta}z)+t\mathbf{b}(e^{i\theta}z)]
     = \frac{\rd}{\rd t}\Big|_{t=0} [e^{id\theta}\mathbf{p}(z)+te^{ik\theta}\mathbf{b}(z)] \\
    &= \frac{\rd}{\rd t}\Big|_{t=0} [\mathbf{p}(z)+te^{i(k-d)\theta}\mathbf{b}(z)]
     = e^{i(k-d)\theta}V_{\mathbf{b}}(z),
}
so $V_{\mathbf{b}}\in E_{k-d}$.
\end{proof}

Define the linear map $T:(\mathcal{P}_k)^{n+1}\to\Gamma(u^*T^{1,0}\CP^n)$ 
by $T(\mathbf{b})\coloneqq V_{\mathbf{b}}$.

\begin{lemma}\label{lemC.9}
(1) If $k<d$, then $T$ is injective.

\noindent(2) If $k\ge d$, then
\eq{
    \ker(T)=\{\lambda \mathbf{p}\mid \lambda\in\mathcal{P}_{k-d}\}.
}
\end{lemma}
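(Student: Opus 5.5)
The plan is to use the identification $I_{\mathbf{p}(z)}:\C^{n+1}/\C\mathbf{p}(z)\cong T^{1,0}_{u(z)}\CP^n$. Because this map is an isomorphism at each point where $\mathbf{p}(z)\neq 0$ (and $\mathbf{p}$ never vanishes on $\S^{2n+1}$, since $h$ is well defined), we get a pointwise characterization of the kernel:
\[
V_{\mathbf{b}}(z)=0 \iff \mathbf{b}(z)\in\C\,\mathbf{p}(z).
\]
Hence $\mathbf{b}\in\ker(T)$ exactly when $\mathbf{b}(z)\wedge\mathbf{p}(z)=0$ for every $z\in\S^{2n+1}$. Since $\mathbf{b}$ and $\mathbf{p}$ are homogeneous, this vanishing extends by scaling to all of $\C^{n+1}$: each $2\times 2$ minor $b_\alpha p_\beta-b_\beta p_\alpha$ is a homogeneous polynomial vanishing on the sphere, so it vanishes identically.

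\textbf{Step 1 (two polynomial vectors proportional at every point).} We have $b_\alpha p_\beta=b_\beta p_\alpha$ in $\C[z_0,\dots,z_n]$ for all $\alpha,\beta$. Because $h$ is a holomorphic self-map of degree $d$, the components $p_0,\dots,p_n$ have no common zero in $\C^{n+1}\setminus\{0\}$, and in particular no nonconstant common factor. After dividing by a common factor we may take $\gcd(p_0,\dots,p_n)=1$; I believe this is implicit in the definition of algebraic degree.

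\textbf{Step 2 (extract $\lambda$).} Work in the UFD $\C[z]$. Choose an index $\beta$ with $p_\beta\neq 0$ and define the rational function $\lambda\coloneqq b_\beta/p_\beta$. The identities from Step 1 give $b_\alpha=\lambda p_\alpha$ for every $\alpha$. Now write $\lambda=f/g$ in lowest terms. Then $g$ divides $f p_\alpha$, and hence $g$ divides $p_\alpha$, for every $\alpha$. By coprimality $g$ is a constant, so $\lambda$ is a polynomial. Comparing degrees, $\lambda$ is either zero or homogeneous of degree $k-d$.

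\textbf{Step 3 (conclude both cases).} For $k<d$, the polynomial $\lambda$ would need negative degree, so $\lambda=0$; thus $\mathbf{b}=0$ and $T$ is injective. For $k\ge d$ we have shown $\ker T\subset\{\lambda\mathbf{p}\mid \lambda\in\mathcal{P}_{k-d}\}$. The reverse inclusion is immediate, since $[\mathbf{p}+t\lambda\mathbf{p}]=[\mathbf{p}]$ and therefore $V_{\lambda\mathbf{p}}=0$, which is also the computation $V_{\mathbf{b}+\lambda z}=V_{\mathbf{b}}$ done earlier.

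The main obstacle is Step 2, in particular justifying the coprimality of the $p_\alpha$. If a common factor were allowed, the kernel could a priori contain $\lambda\mathbf{p}$ with $\lambda$ rational rather than polynomial. I would settle this by invoking that a well-defined holomorphic map $\CP^n\to\CP^n$ given by degree-$d$ polynomials has no common zeros off the origin. This forces the gcd to be trivial, because a nonconstant common factor would vanish on a hypersurface, and every hypersurface in $\C^{n+1}$ with $n\ge1$ contains nonzero points.
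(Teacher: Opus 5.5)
Your proof is correct and follows the paper's argument: you write $\lambda=b_\beta/p_\beta$ as a reduced fraction, use the coprimality of the $p_\alpha$ to force the denominator to be constant, and then compare degrees. You also fill in details the paper leaves implicit: extending the vanishing of the minors from the sphere by homogeneity, deriving coprimality from the absence of common zeros, and checking the reverse inclusion. The aside about ``dividing by a common factor'' should be dropped, since it would change $d$ and your final paragraph already shows no such factor exists.
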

\begin{proof}
Suppose $T(\mathbf{b})=V_{\mathbf{b}}\equiv 0$. Then $\<\mathbf{b}(z)\>=0$ for all $z$, i.e.
\eq{
    \mathbf{b}(z) = \lambda(z)\mathbf{p}(z)
}
for some smooth function $\lambda(z)$.  
Then $\lambda=b_\alpha/p_\alpha$ is rational for each $0\le\alpha\le n$. Write $\lambda=R/S$ with $R, S$ coprime.  Then $b_\alpha/p_\alpha=R/S$ implies
\eq{
 b_\alpha S = p_\alpha R, \qquad 0\le\alpha\le n.
}
Since $R$ and $S$ are coprime, it follows that $S$ divides each $p_\alpha$. Since $p_0,\dots,p_n$ are coprime, we conclude that $S$ is constant. Since $\mathbf{b}\in(\mathcal{P}_k)^{n+1}$ and $\mathbf{p}\in(\mathcal{P}_d)^{n+1}$, we have $\lambda\in\mathcal{P}_{k-d}$. The claim then follows.
\end{proof}

Finally, we prove Theorem~\ref{thmC.1}.
\begin{proof}[Proof of Theorem~\ref{thmC.1}]
Recall that
\eq{
    {\rm dim}_{\C}\mathcal{P}_k = \binom{n+k}{n}.
}
Lemma~\ref{lemC.9} then gives that for $m=k-d$,
\eq{
    {\rm dim}_{\C}\big({\rm ker}(\bar{\p}_b)\cap E_m\big) \geq
    \begin{cases}
        (n+1)\binom{n+d+m}{n} - \binom{n+m}{n}, &\quad m\ge 0, \\
        (n+1)\binom{n+d+m}{n}, &\quad -d\le m<0.
    \end{cases}
}
Together with Proposition~\ref{propC.6}, this yields the claimed lower bounds.
\end{proof}

\begin{remark}
In the special case $\mathbf{p}(z)=z$, taking $\mathbf{b}=\frac{\p \mathbf{p}}{\p z_\beta}$ for a fixed $\beta$ yields
\eq{
    V_{\mathbf{b}} = \frac{\rd}{\rd t}\Big|_{t=0} \Big[\mathbf{p}+t\frac{\p \mathbf{p}}{\p z_\beta}\Big]
    = \frac{1}{2}\rd h\circ\rd\pi(\nabla \bar{z}_\beta)
    = \frac{1}{2}\rd u(\nabla \bar{z}_\beta),
}
which recovers the variations coming from conformal Killing vector fields on $\S^{2n+1}$.
\end{remark}

\section{Proof of Theorem~\ref{thm_nullity}: index part}\label{sec5}

In this section, we prove the following result, which completes the proof of the first statement in Theorem~\ref{thm_nullity}.

\begin{theorem}\label{thm5.1}
For any degree-one holomorphic map $h:\CP^n\to\CP^n$, and $u=h\circ\pi$, we have
\eq{
    {\rm ind}(u)=2n+2.
}
\end{theorem}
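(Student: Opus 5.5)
The plan is to match the lower bound from Theorem~\ref{thm_2.1} with an upper bound obtained from the complexified second variation. Theorem~\ref{thm_2.1} gives ${\rm ind}(u)\ge 2n+2$, since $u=h\circ\pi$ is non-constant. For the upper bound I use Corollary~\ref{lemC.7} together with the $\nabla_\xi$-eigenspace decomposition of Lemma~\ref{lem:commute-xi-kohn}. The commutator relation $\nabla_\xi\bar\p_b-\bar\p_b\nabla_\xi=i\bar\p_b$ shows that $\bar\p_b$ maps $E_m$ into a single eigenspace. Hence the $L^2$-norms of $\bar\p_bV_m$ for different $m$ are mutually orthogonal, and
\begin{equation*}
Q(V,\bar V)=\sum_{m\in\mathbb{Z}}\int\Big(2\abs{\bar\p_bV_m}^2+m(m+2n)\abs{V_m}^2\Big).
\end{equation*}
Consequently $Q\ge 0$ on $E_m$ whenever $m\ge 0$ or $m\le -2n$. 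For each $m$ with $-2n<m<0$, the number of negative directions equals the number of eigenvalues of $\Box_b|_{E_m}$ lying below $-\tfrac12 m(m+2n)$. Since $V$ and $iV$ yield independent real variations, each complex dimension counts twice in the real index.

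Next I identify the kernels. A section in $\ker\bar\p_b\cap E_m$ is $\S^1$-equivariant and transversely holomorphic. It therefore descends to a holomorphic section over $\CP^n$ of $h^*T^{1,0}\CP^n\otimes\mathcal{O}(m)$. Because $h$ has degree one it is a biholomorphism, so this bundle is isomorphic to $T^{1,0}\CP^n\otimes\mathcal{O}(m)$. The Euler sequence then gives $H^0=\mathcal{P}_{m+1}^{\,n+1}/\mathcal{P}_m$. This space has complex dimension $n+1$ for $m=-1$ and vanishes for $m\le -2$, which is consistent with Lemma~\ref{lemC.9}. The $m=-1$ kernel is exactly the span of the $V_{\mathbf b}$ with $\mathbf b$ constant, that is, the conformal Killing directions $\rd u(X_i)$. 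Thus the expected $2n+2$ negative directions are already present, and it remains to show that there are no others. Concretely, I must prove the spectral gap
\begin{equation*}
\int 2\abs{\bar\p_bV}^2\ \ge\ -m(m+2n)\int\abs{V}^2,
\end{equation*}
for all $V\in E_m$ with $-2n<m\le-1$ that are orthogonal to $\ker\bar\p_b$. For $m\le -2$ the kernel is trivial, so this must hold on all of $E_m$.

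To get there I will first reduce to a normal form. Write $h\in{\rm PGL}(n+1,\C)$ using the Cartan decomposition as $h=U_1\,a\,U_2$, with $U_1,U_2\in{\rm U}(n+1)$ and $a={\rm diag}(e^{t_0},\dots,e^{t_n})$. The factor $U_1$ is an isometry of the target and does not change the index. The factor $U_2$ lifts to an isometry of $\S^{2n+1}$ that commutes with $\pi$, so it can be absorbed into $\Xi$. It therefore suffices to treat $h=a$. For $a=\rid$, that is $u=\pi$, everything is homogeneous under ${\rm U}(n+1)$. In that case $\Box_b$ on $E_m$ can be computed by representation theory, or via the Bochner identity of Lemma~\ref{lem:C5}, which reads
\begin{equation*}
\int\abs{\bar\p_bV}^2=\int\abs{\p_bV}^2-2nm\abs{V}^2+\textstyle\sum_\alpha R(V,\bar V,\rd u(Z_\alpha),\rd u(\bar Z_\alpha)),
\end{equation*}
combined with the explicit Fubini--Study curvature. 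From this computation I expect to confirm the gap.

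The main obstacle is the non-isometric diagonal case $a\ne\rid$. There the pulled-back metric $a^*g_{FS}$ is no longer ${\rm U}(n+1)$-invariant, and $\Box_b$ changes. I plan to attack it in two ways. The first is a continuity argument along the path $a_s={\rm diag}(e^{st_j})$, $s\in[0,1]$. The eigenvalues of $\Box_b|_{E_m}$ vary continuously in $s$, so the count of negative directions can change only if some eigenvalue reaches exactly $-\tfrac12m(m+2n)$ for some $-2n<m<0$, which would produce extra null directions in $E_m$. I would rule this out by showing that every Jacobi field with $-2n<m<0$ must lie in $\ker\bar\p_b$. I would try to prove that by applying the Bochner identity above to a Jacobi field $V\in E_m$: it gives a second expression for $Q$ in terms of $\abs{\p_bV}^2$, and averaging the two expressions with suitable weights should force $\bar\p_bV=0$ in that range. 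Since the kernels are independent of $s$ by the Euler-sequence computation, the index would then stay constant along the path and equal $2n+2$. If that averaging step fails, my fallback is to use the torus symmetry of the diagonal $a$ to split each $E_m$ into weight spaces and estimate $\Box_b$ directly on each of them.
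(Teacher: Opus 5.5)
Your setup is sound: the El Soufi lower bound, the weight decomposition with $Q|_{E_m}=2\Box_b+m(m+2n)$, positivity for $m\ge0$ and $m\le-2n$, the Euler-sequence identification of $\ker\bar\p_b\cap E_m$, and the Cartan reduction $h=U_1aU_2$ are all correct. The problem is that the substance of the theorem is the spectral gap for $-2n<m\le-1$, and you do not prove it, not even for $u=\pi$. The Bochner identity alone does not give it. For $u=\pi$ one has $\sum_\alpha R(V,\bar V,\rd u(Z_\alpha),\rd u(\bar Z_\alpha))=-2(n+1)\abs{V}^2$, so on $E_m$
\[
Q(V,\bar V)=2\int\abs{\p_bV}^2+(m+2)(m-2n-2)\int\abs{V}^2 .
\]
This settles $m\le-2$ for the Hopf map. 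For $m=-1$, however, it reads $2\int\abs{\p_bV}^2-(2n+3)\int\abs{V}^2$, which is not sign-definite, so you need a second, genuinely spectral input there.

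The more serious gap is that your continuity argument for $a\neq\rid$ rests on a false claim. For $n\ge2$ the weight $m=-2$ lies in your range $-2n<m<0$. For every degree-one $h$, $E_{-2}$ contains a nonzero space of Jacobi fields: these are the $n^2+n$ null directions of the case $m=2$ in Theorem~\ref{thmD.1}, where the paper labels sections by weight $-m$. By your own Euler-sequence computation, $\ker\bar\p_b\cap E_{-2}=0$ for $n\ge2$, so these Jacobi fields are not in $\ker\bar\p_b$. For $u=\pi$ the display above shows they are exactly the $V$ with $\p_bV=0$.

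Consequently $\Box_b|_{E_{-2}}$ has eigenvalues sitting exactly at the threshold $2(n-1)$ along the whole path. The inequality you need at $m=-2$ is therefore sharp, and no averaging of the two expressions can force $\bar\p_bV=0$. To rescue the deformation you would have to show that the threshold multiplicity in $E_{-2}$ stays constant in $s$, and that no threshold eigenvalues occur at the other weights. That amounts to proving $Q\ge0$ on $E_{-2}$, with a characterization of equality, for each $a$, which is the whole difficulty.

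The paper avoids deformation entirely by using estimates that are uniform in $h$:
\begin{itemize}
\item For weights $\le-2$, it averages over hyperplanes $H_\ell\cong\CP^{n-1}$ and applies Bochner--Kodaira--Nakano to the line bundle $\mathcal{O}(1-m)$. This gives $\int\abs{\bar\p s}^2\ge2(n-1)(m-1)\int\abs{s}^2$, hence $Q_m\ge(m-2)(m+2n-2)\int\abs{s}^2$.
\item For weight $-1$, it rewrites $\Omega^{0,1}(\mathcal{E}(1))\cong\Omega^{n,1}(h^*T^{1,0}\CP^n\otimes\mathcal{O}(n))$. Nakano semipositivity of $h^*T^{1,0}\CP^n$ together with $i\Theta(\mathcal{O}(n))=2n\omega$ then gives $\int\abs{\bar\p s}^2\ge2n\int\abs{s}^2$ on $H^0(\mathcal{E}(1))^\perp$ (Proposition~\ref{propD.8}).
\end{itemize}
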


Since $h$ has degree one, there exists $A\in{\rm GL}(n+1,\C)$ such that
\eq{
    h([z]) = [Az].
}

Recall \eqref{eq:Q_V} that for any $V\in\Gamma(u^*T^{1,0}\CP^n)$, we have
\eq{\label{eq:9}
    Q(V,\bar{V}) = \int_{\S^{2n+1}} \Big( 2\abs{\bar{\p}_bV}^2 + \abs{\nabla_\xi V}^2 - 2ni\<\nabla_\xi V,\bar{V}\> \Big).
}

\subsection{Formulation using the tautological line bundle}

We say that $V$ has weight $m$, if
\eq{
    \nabla_\xi V = imV,
}
or equivalently
\eq{
    V(e^{i\theta}z) = e^{im\theta}V(z).
}

Let $\mathcal{O}(-1)$ be the tautological line bundle over $\CP^n$, which has fibre $\C z$ at each point $[z]\in\CP^n$. Set $\mathcal{O}(-m) \coloneqq \mathcal{O}(-1)^{\otimes m}$, defined by
\eq{
    \mathcal{O}(-m)_{[z]} = (\C z)^{\otimes m} = \C z\otimes \cdots\otimes\C z \quad\hbox{($m$ times)}.
}
For $m<0$, tensor powers are interpreted using the dual bundle, and $\mathcal{O}(0)$ is the trivial line bundle.

\begin{lemma}
If $V$ has weight $-m$, then $V\otimes z^{\otimes m}$ has weight $0$, i.e. is basic.
\end{lemma}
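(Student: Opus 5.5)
The statement is essentially a bookkeeping identity about how the Hopf $\S^1$-action acts on each tensor factor. The plan is to read $V\otimes z^{\otimes m}$ as a section of $u^*T^{1,0}\CP^n\otimes\pi^*\mathcal{O}(-m)$ over $\S^{2n+1}$. Here $z^{\otimes m}$ denotes the tautological section $z\mapsto z^{\otimes m}\in(\C z)^{\otimes m}=\mathcal{O}(-m)_{\pi(z)}$; for $m<0$ one uses instead the dual section, determined by $z^*(z)=1$. We then check weight $0$ by evaluating along Hopf orbits.

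\textbf{Weight of each factor.} For the first factor, the hypothesis gives $V(e^{i\theta}z)=e^{-im\theta}V(z)$. Both sides are vectors in $T^{1,0}_{u(z)}\CP^n$, since $u(e^{i\theta}z)=u(z)$. For the second factor, we use that the fibre of $\mathcal{O}(-m)$ over $\pi(e^{i\theta}z)=\pi(z)$ is the same line $(\C z)^{\otimes m}$. Inside this line,
\[
(e^{i\theta}z)^{\otimes m}=e^{im\theta}z^{\otimes m}.
\]
For $m<0$ the dual element satisfies $(e^{i\theta}z)^*=e^{-i\theta}z^*$, so the same formula holds with the convention $z^{\otimes m}=(z^*)^{\otimes|m|}$.

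\textbf{Product.} Multiplying the two factors gives
\[
(V\otimes z^{\otimes m})(e^{i\theta}z)=e^{-im\theta}e^{im\theta}\,V(z)\otimes z^{\otimes m}=V(z)\otimes z^{\otimes m}.
\]
So the section is invariant along every Hopf fibre, which is exactly weight $0$ in the equivalent formulation $V(e^{i\theta}z)=e^{i\cdot 0\cdot\theta}V(z)$. Consequently it descends to a section of $h^*T^{1,0}\CP^n\otimes\mathcal{O}(-m)$ over $\CP^n$, i.e.\ it is basic.

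\textbf{Equivalence of the two definitions of weight, which is the only delicate point.} To state this through $\nabla_\xi$ as well, one must equip $\pi^*\mathcal{O}(-1)$ with the connection induced from the trivial connection on $\C^{n+1}$ by orthogonal projection, i.e.\ the Chern connection. Along the orbit $\gamma(\theta)=e^{i\theta}z$ we have $\dot\gamma=\xi=\mathbf{j}z$, so
\[
\nabla_\xi z=\mathrm{proj}_{\C z}(iz)=iz.
\]
Hence $z$ has weight $1$ and $z^{\otimes m}$ has weight $m$ with respect to $\nabla_\xi$. The pullback connection on $u^*T^{1,0}\CP^n$ is flat along fibres, because $\rd u(\xi)=0$. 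By the Leibniz rule,
\[
\nabla_\xi(V\otimes z^{\otimes m})=(-im+im)\,V\otimes z^{\otimes m}=0.
\]
The equivalence of ``$\nabla_\xi W=i\mu W$'' with ``$W(e^{i\theta}z)=e^{i\mu\theta}W(z)$'' holds because, in fibre-wise parallel frames along the orbit, the first condition is an ODE in $\theta$ whose solution is the second. Once the connection on $\mathcal{O}(-m)$ is fixed as above, the argument reduces to this direct computation.
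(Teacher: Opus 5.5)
Your proposal is correct and follows the paper's argument: the paper multiplies $V(e^{i\theta}z)=e^{-im\theta}V(z)$ by $(e^{i\theta}z)^{\otimes m}=e^{im\theta}z^{\otimes m}$ and concludes that the product is invariant along Hopf fibres, exactly as in your ``Product'' step. Your treatment of $m<0$ via the dual section and your Leibniz-rule check with $\nabla_\xi$ (using $\rd u(\xi)=0$ and $\nabla_\xi z=iz$) are correct additions that the paper leaves implicit.
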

\begin{proof}
By definition we have
\eq{
    V(e^{i\theta}z) = e^{-im\theta}V(z),
}
and
\eq{
    (e^{i\theta}z)^{\otimes m} = e^{im\theta}z^{\otimes m}.
}
Hence
\eq{
    V(e^{i\theta}z)\otimes(e^{i\theta}z)^{\otimes m} = V(z)\otimes z^m, 
}
and the claim follows.
\end{proof}

As a consequence, $V\otimes z^{\otimes m}$ descends to $\CP^n$ as a section $s$ of the bundle
\eq{
    \mathcal{E}(m)\coloneqq h^*T^{1,0}\CP^n\otimes\mathcal{O}(-m),
}
which has fibre
\eq{
    \mathcal{E}(m)_{[z]}= T^{1,0}_{h([z])}\CP^n\otimes(\C z)^{\otimes m}.
}

\begin{lemma}\label{lemD.3}
If $V$ has weight $-m$, then the correspondence
\eq{\label{eq:10}
    V\ \mapsto\ s\in\Gamma(\mathcal{E}(m)) \quad\hbox{with}\quad (\pi^*s)(z) = V(z)\otimes z^{\otimes m} 
}
is pointwise isometric. Moreover, it induces a pointwise isometry
\eq{
    \bar{\p}_bV \quad\mapsto\quad \bar{\p}s.
}
\end{lemma}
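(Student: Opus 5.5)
The plan has two parts: first check that the correspondence is well defined and pointwise isometric, then identify $\bar\p_b V$ with $\bar\p s$ under it.

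\textbf{Well-definedness and isometry.} By the preceding lemma, $V\otimes z^{\otimes m}$ is invariant under the Hopf action $z\mapsto e^{i\theta}z$. It is therefore the pullback of a unique section $s$ of $\mathcal{E}(m)$. This uses that $u^*T^{1,0}\CP^n=\pi^*h^*T^{1,0}\CP^n$ and that $z\mapsto z^{\otimes m}$ is a section of $\pi^*\mathcal{O}(-m)$ over $\S^{2n+1}$. Put on $\mathcal{O}(-1)$ the Hermitian metric induced from $\C^{n+1}$, and on $\mathcal{E}(m)$ the tensor product metric. Since $|z|=1$ on $\S^{2n+1}$, we get $|z^{\otimes m}|=1$, and hence
\[
|s|(\pi(z))=|V(z)|\,|z|^m=|V(z)|,
\]
which gives the first claim. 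For $m<0$ the same argument applies with $z^{\otimes m}$ read as $(z^*)^{\otimes |m|}$, where $z^*$ is the dual element with $z^*(z)=1$, again of unit norm.

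\textbf{Identifying $\bar\p_b V$ with $\bar\p s$.} Equip $\mathcal{E}(m)$ with its Chern connection, namely the tensor product of the pulled-back Levi-Civita connection on $h^*T^{1,0}\CP^n$ and the Chern connection of $\mathcal{O}(-1)$. The $(0,1)$-part of this connection is $\bar\p$. Pulling back by $\pi$, we need to compute $\pi^*\nabla$ in horizontal directions. The key input is the Chern connection of the tautological bundle pulled back to the sphere: the section $z$ of $\pi^*\mathcal{O}(-1)$ satisfies $\nabla^{\mathcal{O}(-1)}z=\mathrm{pr}_{\C z}(\rd z)$.

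For a horizontal $(0,1)$ vector $\bar Z\in T^{0,1}\mathcal{H}$, we have $\bar Z z=0$, because the coordinate functions $z_j$ are CR-holomorphic. Also $\<\bar Z,\cdot\>$ is horizontal, so the $\C z$-component of the derivative of $z$ in the direction $\bar Z$ vanishes. Hence $\nabla_{\bar Z}z^{\otimes m}=0$. In fact this holds for all horizontal directions, since the connection form of $\mathcal{O}(-1)$ pulled back to $\S^{2n+1}$ is $\<\rd z,z\>$ (Hermitian), and this equals $i\eta$, the contact form, which vanishes on $\mathcal{H}$.

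By the Leibniz rule,
\[
\pi^*(\nabla_{\rd\pi\bar Z}s)=\nabla_{\bar Z}(V\otimes z^{\otimes m})=(\nabla_{\bar Z}V)\otimes z^{\otimes m}.
\]
Since $\rd\pi$ is a unitary isomorphism $\mathcal{H}\otimes\C\to T\CP^n\otimes\C$ that maps $T^{0,1}\mathcal{H}$ onto $T^{0,1}\CP^n$ by \eqref{eq:pi_intertwines}, the unitary frame $\bar Z_\alpha$ corresponds to the unitary frame $\bar W_\alpha$. Applying the isometry from the first part to each component then gives $|\bar\p_bV|=|\bar\p s|\circ\pi$.

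\textbf{Main obstacle.} The delicate point is the connection identification, specifically normalizing the connection on $\mathcal{O}(-1)$ so that its horizontal pullback is trivial on $z$. I would handle it by computing the pulled-back connection directly on $\C^{n+1}\setminus\{0\}$ and restricting to $\S^{2n+1}$. For the weight consistency check, note that $\nabla_\xi z=iz$, so $\nabla_\xi(V\otimes z^{\otimes m})=(-im+im)(V\otimes z^{\otimes m})=0$, consistent with $s$ being basic.
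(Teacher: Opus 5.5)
Your proposal is correct and follows the paper's proof: the isometry comes from $|z^{\otimes m}|=1$ on $\S^{2n+1}$, and the identification of $\bar\p_bV$ with $\bar\p s$ comes from the Leibniz rule together with $\nabla^{\pi^*\mathcal{O}(-1)}_X z=\mathrm{proj}_{\C z}X=0$ for horizontal $X$. You justify this vanishing slightly more explicitly, by identifying the pulled-back connection form with $i\eta$; for $m<0$ the corresponding vanishing of $\nabla_X z^*$ follows from the dual connection, which you use only implicitly.
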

\begin{proof}
Since $\abs{z}=1$ for $z\in\S^{2n+1}$, we have
\eq{
    \abs{z^{\otimes m}}=1.
}
Hence
\eq{
    \abs{s([z])} = \abs{V(z)\otimes z^{\otimes m}} = \abs{V(z)},
}
and the first claim follows. Moreover, if $X\in T_z\S^{2n+1}$ is a horizontal vector, then
\eq{
    \pi^*\nabla^{\mathcal{E}(m)}_{\rd\pi(X)}s = \nabla^{\pi^*\mathcal{E}(m)}_{X}(\pi^*s) = \nabla^{u^*T^{1,0}\CP^n}_XV\otimes z^{\otimes m}.
}
For each horizontal vector $X$, we have
\eq{
    \nabla^{\pi^*\mathcal{O}(-1)}_Xz = {\rm proj}_{\C z}X = 0.
}
Hence
\eq{
    \bar{\p}s(\rd\pi(X)) = \bar{\p}_bV(X)\otimes z^{\otimes m},
}
and the claim follows.
\end{proof}

\begin{lemma}
If $V$ has weight $-m$, and $s$ is the associated section via \eqref{eq:10}, then \eqref{eq:9} descends to
\eq{\label{eq:10.1}
    Q_m(s) \coloneqq \frac{1}{2\pi}Q(V,\bar{V}) = 2\int_{\CP^n} \abs{\bar{\p}s}^2 - m(2n-m)\int_{\CP^n}\abs{s}^2, \quad s\in\Gamma(\mathcal{E}(m)).
}
\end{lemma}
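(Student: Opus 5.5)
The identity \eqref{eq:10.1} follows by substituting the weight condition into \eqref{eq:9} and then pushing every integrand down to $\CP^n$ through the Riemannian submersion $\pi$.

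\emph{Step 1 (vertical terms).} Since $V$ has weight $-m$, we have $\nabla_\xi V=-imV$. Hence
\[
\abs{\nabla_\xi V}^2=\<\nabla_\xi V,\overline{\nabla_\xi V}\>=m^2\abs{V}^2,
\qquad
-2ni\<\nabla_\xi V,\bar V\>=-2ni(-im)\abs{V}^2=-2nm\abs{V}^2 .
\]
The two vertical terms therefore combine to $(m^2-2nm)\abs{V}^2=-m(2n-m)\abs{V}^2$. This is consistent with the earlier computation in Section~\ref{sec4}, where weight $m'=-m$ gave the factor $m'(m'+2n)$. Substituting into \eqref{eq:9} yields
\[
Q(V,\bar V)=\int_{\S^{2n+1}}\Big(2\abs{\bar\p_bV}^2-m(2n-m)\abs{V}^2\Big).
\]

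\emph{Step 2 (pointwise descent).} By Lemma~\ref{lemD.3}, $\abs{V}(z)=\abs{s}(\pi(z))$ and $\abs{\bar\p_bV}(z)=\abs{\bar\p s}(\pi(z))$. The second identity needs one check: the unitary horizontal lifts $\{\bar Z_\alpha\}$ are mapped by $\rd\pi$ to a unitary frame $\{\bar W_\alpha\}$ of $T^{0,1}\CP^n$, which holds by \eqref{eq:dp_isometry} and \eqref{eq:horizontal_lift}. Consequently the norm $\sum_\alpha\abs{\nabla_{\bar Z_\alpha}V}^2$ equals the norm of $\bar\p s$ computed in the frame $\bar W_\alpha$, after tensoring with the unit vector $z^{\otimes m}$. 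Both integrands on $\S^{2n+1}$ are thus pullbacks $\pi^*f$ of functions $f$ on $\CP^n$.

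\emph{Step 3 (fibre integration).} Since $\pi$ is a Riemannian submersion whose fibres are great circles of length $2\pi$, the coarea formula gives $\int_{\S^{2n+1}}\pi^*f=2\pi\int_{\CP^n}f$. Dividing by $2\pi$ produces \eqref{eq:10.1}.

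The only step needing care is the identification in Step 2. One must verify that the connection on $\mathcal{E}(m)$ used to define $\bar\p s$ is the tensor product of the pullback connection $h^*\nabla$ with the Chern connection on $\mathcal{O}(-m)$ induced from the trivial connection on $\C^{n+1}$. With this choice the horizontal derivative of $z^{\otimes m}$ vanishes, which is exactly the computation in Lemma~\ref{lemD.3}. No additional curvature contribution arises, because only $\bar Z_\alpha$-derivatives enter and no commutators are taken.
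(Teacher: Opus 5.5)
Your proposal is correct and matches the paper's proof: substitute $\nabla_\xi V=-imV$ into \eqref{eq:9} to get $(m^2-2mn)\abs{V}^2$, then use the pointwise isometries of Lemma~\ref{lemD.3} and the fact that the Hopf fibres have length $2\pi$ to push both integrals down to $\CP^n$. Your extra remarks on the unitary horizontal frames and on the connection on $\mathcal{O}(-m)$ only spell out what Lemma~\ref{lemD.3} already provides.
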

\begin{proof}
Since $V$ has weight $-m$, we have $\nabla_\xi V=-imV$. Then \eqref{eq:9} implies that
\eq{
    Q(V,\bar{V}) = \int_{\S^{2n+1}} \Big( 2\abs{\bar{\p}_bV}^2 + m^2\abs{V}^2 - 2mn\abs{V}^2 \Big).
}
Since each Hopf fibre is $\S^1$ with length $2\pi$, we have by Lemma~\ref{lemD.3} that
\eq{
    \int_{\S^{2n+1}} \abs{V}^2 = 2\pi\int_{\CP^n}\abs{s}^2,
}
and
\eq{
    \int_{\S^{2n+1}} \abs{\bar{\p}_bV}^2 = 2\pi\int_{\CP^n}\abs{\bar{\p}s}^2.
}
The claim follows.
\end{proof}

\subsection{Fix the representative}

\begin{lemma}\label{lemI.1}
Given $0\neq z\in\C^{n+1}$, we have
\eq{
    T^{1,0}_{[z]}\CP^n\cong{\rm Hom}_\C(\C z,\C^{n+1}/\C z).
}
\end{lemma}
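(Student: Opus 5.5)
The plan is to use the identification $T^{1,0}_{[z]}\CP^n \cong \C^{n+1}/\C z$ already established via the map $I_z$, and then to reinterpret the quotient $\C^{n+1}/\C z$ as a space of homomorphisms out of the line $\C z$. Concretely, I will define
\[
\Psi_z:{\rm Hom}_\C(\C z,\C^{n+1}/\C z)\to T^{1,0}_{[z]}\CP^n,\qquad \Psi_z(\phi)\coloneqq I_z\big(\phi(z)\big),
\]
that is, $\phi\mapsto \frac{\rd}{\rd t}\big|_{t=0}[z+t\,\mathbf{b}]$ for any representative $\mathbf{b}$ of $\phi(z)$. Well-definedness in the choice of $\mathbf{b}$ was already shown: $V_{\mathbf{b}+\lambda z}=V_{\mathbf{b}}$.

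The key point is that $\Psi_z$ depends only on the line $[z]$, not on the chosen representative $z$. If $z'=\mu z$ with $\mu\in\C^*$, then $\C z'=\C z$, so the domain is unchanged, and $\phi(z')=\mu\phi(z)$. Hence
\[
\frac{\rd}{\rd t}\Big|_{t=0}[\mu z+t\mu\mathbf{b}]=\frac{\rd}{\rd t}\Big|_{t=0}[z+t\mathbf{b}],
\]
because $[\mu w]=[w]$. So $\Psi_{z'}=\Psi_z$, and this is the invariance that the unparametrised quotient $\C^{n+1}/\C z$ alone does not have: $I_{\mu z}(\<\mathbf{b}\>)=I_z(\<\mathbf{b}/\mu\>)$.

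Linearity of $\Psi_z$ follows from linearity of $I_z$; I will check linearity of $I_z$ in a chart, for instance via \eqref{eq_holo_derivative} with $\mathbf{p}=z$, where the derivative is visibly $\C$-linear in $\mathbf{b}$. Evaluation at $z$ gives an isomorphism ${\rm Hom}_\C(\C z,Q)\cong Q$ for any complex vector space $Q$. Since $I_z$ is an isomorphism (its kernel is trivial, and both sides have complex dimension $n$), $\Psi_z$ is an isomorphism.

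Finally I will note that these isomorphisms fit together into a bundle isomorphism $T^{1,0}\CP^n\cong {\rm Hom}(\mathcal{O}(-1),\underline{\C}^{n+1}/\mathcal{O}(-1))$, which is the form needed to twist by $\mathcal{O}(-m)$ later. The only real subtlety is the representative-independence step, and the computation above settles it. Injectivity of $I_z$ is easy: if $\<\mathbf{b}\>\neq0$, then in the chart $z_0\neq0$ the expression $(z_0b_\alpha-z_\alpha b_0)/z_0^2$ is nonzero for some $\alpha$.
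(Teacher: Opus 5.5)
Your proposal is correct, but it goes in the opposite direction from the paper's proof. The paper defines a map $T^{1,0}_{[z]}\CP^n\to{\rm Hom}_\C(\C z,\C^{n+1}/\C z)$ by lifting curves. Given a curve of lines $L(t)$ with $L'(0)=v$ and a lift $z(t)\in L(t)$ through a point of the line, it sets $\varphi_v(z)=z'(0)\bmod \C z$ and checks that this does not depend on the lift. It then proves injectivity by hand, and proves surjectivity by exhibiting the curve $[z+ta]$, which is exactly your $\Psi_z$.

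You instead take $\Psi_z$ as the definition. You factor it as evaluation at $z$ followed by the earlier identification $I_z$, and you get bijectivity from injectivity in a chart plus the fact that both sides have complex dimension $n$.

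Your route has some advantages. Linearity and well-definedness are automatic, since $\Psi_z$ is a composition of linear maps. You also actually verify that $I_z$ is a linear isomorphism, which the paper only asserts. The paper, for its part, leaves implicit that $\varphi_v$ depends only on $v$ and not on the chosen curve $L(t)$, and that $v\mapsto\varphi_v$ is $\C$-linear.

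The paper's route has its own advantage. Because $\varphi_v$ is defined on the whole line at once via lifts, it does not depend on a chosen representative $z$. It therefore globalizes immediately to $T^{1,0}\CP^n\cong{\rm Hom}(\mathcal{O}(-1),\underline{\C}^{n+1}/\mathcal{O}(-1))$. Your route needs the separate scaling check $I_{\mu z}(\<\mu\mathbf{b}\>)=I_z(\<\mathbf{b}\>)$ to get this, and you supply it correctly.

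One small fix: in the injectivity step, work in a chart $\{z_j\neq 0\}$ with $z_j\neq0$, rather than assuming $z_0\neq 0$.
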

\begin{proof}
Choose a curve in $\CP^n$, i.e. a family of lines $L(t)$ in $\C^{n+1}$, with $L(0)=L\in\CP^n$. Then
\eq{
    v \coloneqq L'(0) \in T^{1,0}_L\CP^n.
}
Given $z\in L$, choose a curve $z(t)$ in $\C^{n+1}$ such that $z(0)=z$ and $z(t)\in L(t)$.
Define a linear operator
\eq{
    \varphi_v: \C z \to \C^{n+1}/\C z, \quad z \mapsto \<z'(0)\> = z'(0) \!\!\mod \C z.
}
It is well-defined, i.e. independent of choice of $z(t)$. In fact, if $w(t)\in L(t)$ is another choice with $w(0)=z$, then
\eq{
    w(t) = \lambda(t)z(t)
}
for some function $\lambda(t)$ with $\lambda(0)=1$. Hence
\eq{
    w'(0)=\lambda'(0)z+z'(0) \equiv z'(0) \!\!\mod\C z.
}
Therefore, we obtain a linear operator
\eq{
    \varphi: T^{1,0}_{[z]}\CP^n\to{\rm Hom}_\C(\C z,\C^{n+1}/\C z), \quad v\mapsto \varphi_v.
}

If $\varphi_v=0$, then for any curve $z(t)$ with $z(0)=z$, we have
\eq{
    z'(0)=\lambda z
}
for some $\lambda\in\C$. We may choose
\eq{
    z(t) = z+tz'(0) = (1+\lambda t)z,
}
hence $L(t)=L=[z]$ and $v=L'(0)=0$. Therefore, $\varphi$ is injective.

Reversely, given a linear operator $T:\C z\to\C^{n+1}/\C z$, we have
\eq{
    T(z) = \<a\> = a \!\!\mod\C z
}
for some $a\in\C^{n+1}$. Define
\eq{
    L(t) \coloneqq [z+ta]\in\CP^n
}
and
\eq{
    v \coloneqq L'(0)\in T^{1,0}_{[z]}\CP^n.
}
We claim that $T=\varphi_v$. In fact, for $z(t)\coloneqq z+ta$, we have
\eq{
    \varphi_v(z) = \<z'(0)\> = \<a\> = T(z).
}
Hence $\varphi$ is surjective, and the conclusion follows.
\end{proof}

Recall the short exact (Euler) sequence
\eq{\label{eq:Euler}
    0 \to \mathcal{O}(-1) \to \underline{\C}^{n+1} \to \underline{\C}^{n+1}/\mathcal{O}(-1) \to 0,
}
where $\underline{\C}^{n+1}$ is the trivial bundle over $\CP^n$. Lemma~\ref{lemI.1} implies that
\eq{
    T^{1,0}\CP^n \cong {\rm Hom}(\mathcal{O}(-1),\underline{\C}^{n+1}/\mathcal{O}(-1)) = \underline{\C}^{n+1}/\mathcal{O}(-1) \otimes \mathcal{O}(1),
}
where we used the fact that $\mathcal{O}(1)$ is the dual bundle of $\mathcal{O}(-1)$. Tensoring \eqref{eq:Euler} with $\mathcal{O}(1-m)$ gives
\eq{\label{eq:11}
    0 \to \mathcal{O}(-m) \to \underline{\C}^{n+1}\otimes\mathcal{O}(1-m) \to T^{1,0}\CP^n\otimes\mathcal{O}(-m) \to 0.
}

In order to fix the choice of representative, we consider
\eq{
    h([z])=[Az]=[p]\in\CP^n, \quad p\coloneqq Az\in\C^{n+1}.
}
Since
\eq{
    \C z \mapsto \C Az
}
is a linear isomorphism (not necessarily isometric), we have
\eq{
    h^*\mathcal{O}(-m) \cong \mathcal{O}(-m).
}
Hence the pull-back by $h$ of \eqref{eq:11} gives
\eq{\label{eq:12}
    0 \to \mathcal{O}(-m) \to \underline{\C}^{n+1}\otimes\mathcal{O}(1-m) \overset{q}{\to} \mathcal{E}(m) \to 0,
}
where $q$ is given by $q(\mathbf{b})=\mathbf{b}\!\!\mod\C p$. For any $s\in\Gamma(\mathcal{E}(m))$, 
the pre-image is
\eq{
    \mathbf{b} \in \Gamma(\underline{\C}^{n+1}\otimes\mathcal{O}(1-m)) \mod \C p,
}
namely $s=q(\mathbf{b})$. Hence
\eq{
    W \coloneqq p\w\mathbf{b}\in\Gamma(\Lambda^2\C^{n+1}\otimes\mathcal{O}(2-m))
}
is independent of the choice of representatives.

\begin{lemma}\label{lemD.6}
Set $\rho\coloneqq\abs{p}^2$. Then
\eq{
    \abs{s}^2 = \rho^{-2}\abs{W}^2, \qquad \abs{\bar{\p}s}^2=\rho^{-2}\abs{\bar{\p}W}^2.
}
\end{lemma}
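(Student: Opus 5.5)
We have to prove Lemma \ref{lemD.6}: with $\rho=\abs{p}^2$, $\abs{s}^2=\rho^{-2}\abs{W}^2$ and $\abs{\bar\p s}^2=\rho^{-2}\abs{\bar\p W}^2$. The approach is to fix a canonical representative of $s$: the unique lift $\mathbf{b}$ with $\mathbf{b}\perp p$. We then compute both sides pointwise on $\S^{2n+1}$, using the isometry of Lemma~\ref{lemD.3} to pass between $V$ on the sphere and $s$ on $\CP^n$. Since $\abs{z}=1$, the factors $z^{\otimes m}$ have unit norm and can be ignored in all norms.

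\textbf{Step 1 (zeroth order).} Under the identification of Lemma~\ref{lemI.1}, $T^{1,0}_{[p]}\CP^n\cong \mathrm{Hom}(\C p,\C^{n+1}/\C p)$. The Fubini--Study metric (holomorphic sectional curvature $4$) is then
\[
\abs{V_{\mathbf b}}^2=\frac{\abs{\mathbf b^\perp}^2}{\abs{p}^2},
\]
where $\mathbf b^\perp$ is the component of $\mathbf b$ orthogonal to $p$. On the other hand,
\[
\abs{p\w\mathbf b}^2=\abs{p}^2\abs{\mathbf b}^2-\abs{\<\mathbf b,\bar p\>}^2=\abs{p}^2\abs{\mathbf b^\perp}^2 .
\]
Hence $\abs{s}^2=\abs{\mathbf b^\perp}^2/\rho=\abs{W}^2/\rho^2$. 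The weights of $\mathcal O(-1)$ must be tracked in the norm of $\mathbf b$, but at $\abs{z}=1$ they contribute trivially. The normalization of $I_{\mathbf p}$ should be checked against the Fubini--Study metric, which is where the power $\rho^{-2}$ rather than $\rho^{-1}$ is fixed.

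\textbf{Step 2 (first order).} Since $W=p\w\mathbf b$ does not depend on the representative, I will differentiate in the frame where $\mathbf b\perp p$ at the chosen point. Note that $p=Az$ is holomorphic, so $\bar\p p=0$ and therefore
\[
\bar\p W=p\w\bar\p\mathbf b .
\]
Applying the zeroth-order identity to $\bar\p\mathbf b$ gives $\abs{p\w\bar\p\mathbf b}^2=\abs{p}^2\abs{(\bar\p\mathbf b)^\perp}^2$.

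It then remains to show that $\bar\p s$ is represented by $(\bar\p\mathbf b)^\perp$, scaled by $\abs{p}^{-1}$ as in Step 1. To see this, I use the Chern connection on $h^*T^{1,0}\CP^n$, viewed as the quotient bundle $\underline{\C}^{n+1}\otimes\mathcal O(1-m)/\mathcal O(-m)$ from \eqref{eq:12}. Its $(0,1)$-part is induced by the trivial flat $\bar\p$ on $\underline\C^{n+1}$, because the Euler sequence is holomorphic. So $\bar\p q(\mathbf b)=q(\bar\p\mathbf b)$.

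The conformal factor $\abs{p}^{-2}$ is not holomorphic, and its derivative could in principle contribute. Its $\bar\p$-derivative produces a term proportional to $\mathbf b^\perp$, not to $p$. I must check that this term is absent, or that it is absorbed by working with $W$ rather than with $\mathbf b/\abs{p}$ (using $\bar\p$ of the $\mathcal O(1-m)$ weights, which is horizontal-trivial by Lemma~\ref{lemD.3}).

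\textbf{Main obstacle.} The hard step is this last consistency check: matching the metric identification of Step 1 with the holomorphic structure, so that no derivative of $\rho$ appears in $\abs{\bar\p s}$. I would first try computing in the holomorphic frame $\{\mathbf b\mapsto q(\mathbf b)\}$ and using that $\bar\p$ is metric-independent, so only $(0,1)$-derivatives of holomorphic data enter. Only the norms involve $\rho$, applied after differentiation, and they are computed exactly by the wedge identity.
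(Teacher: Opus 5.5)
Your argument is the paper's proof. The pointwise identity comes from $\mathbf b=\mathbf b^\perp+\lambda p$, giving $\abs{s}^2=\rho^{-1}\abs{\mathbf b^\perp}^2$ and $\abs{W}^2=\rho\abs{\mathbf b^\perp}^2$. You then have $\bar\p s=q(\bar\p\mathbf b)$ because the quotient map $q$ is holomorphic; the paper phrases this as $\bar\p(\mathbf b+fp)\equiv\bar\p\mathbf b \bmod \C p$. Finally $\bar\p W=p\w\bar\p\mathbf b$ because $p=Az$ is holomorphic.

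The ``main obstacle'' you flag does not exist. Once $\bar\p s=q(\bar\p\mathbf b)$ holds exactly, $\rho$ enters only through the pointwise algebraic identity $\abs{q(\beta)}^2=\rho^{-1}\abs{\beta^\perp}^2=\rho^{-2}\abs{p\w\beta}^2$, applied componentwise to the form $\beta=\bar\p\mathbf b$. No derivative of $\rho$, or of a normalization like $\mathbf b^\perp/\abs{p}$, ever arises, so your proof is complete as written.
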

\begin{proof}
We decompose
\eq{
    \mathbf{b} = \mathbf{b}^\perp + \lambda p, \qquad \mathbf{b}^\perp\perp p.
}
It follows that
\eq{\label{eq:13}
    \abs{s}^2 = \abs{q(\mathbf{b})}^2 = \frac{\abs{\mathbf{b}^\perp}^2}{\abs{p}^2} = \rho^{-1}\abs{\mathbf{b}^\perp}^2.
}
Since $W=p\w\mathbf{b}=p\w\mathbf{b}^\perp$, we have
\eq{\label{eq:14}
    \abs{W}^2 = \abs{p}^2\abs{\mathbf{b}^\perp}^2 = \rho\abs{\mathbf{b}^\perp}^2. 
}
Combining \eqref{eq:13} and \eqref{eq:14} gives the first claim.

Next, for any $\mathbf{b}'=\mathbf{b}+fp$ we have
\eq{
    \bar{\p}\mathbf{b}' = \bar{\p}\mathbf{b} + (\bar{\p}f)p \equiv \bar{\p}\mathbf{b} \!\!\mod \C p.
}
Hence $\bar{\p}\mathbf{b}$ is a pre-image of $\bar{\p}s$, namely
\eq{
    \bar{\p}s = q(\bar{\p}\mathbf{b}).
}
Moreover, we have
\eq{
    \bar{\p}W = \bar{\p}(p\w\mathbf{b}) = p\w\bar{\p}\mathbf{b}. 
}
Hence the second claim follows similarly.
\end{proof}

\subsection{The spectral gap for \texorpdfstring{$m\leq0$}{m<=0} and \texorpdfstring{$m\geq2$}{m>=2} }

This case $m\le 0$ is trivial, since \eqref{eq:10.1} implies that
\eq{
    Q_m(s) \geq 2\int_{\CP^n}\abs{\bar{\p}s}^2 \geq 0.
}

We now consider the case $m\ge 2$. 
\begin{proposition}
For $m\geq2$ we have
\eq{\label{eq:14.9}
    \int_{\CP^n}\abs{\bar{\p}s}^2 \geq 2(n-1)(m-1)\int_{\CP^n}\abs{s}^2.
}
\end{proposition}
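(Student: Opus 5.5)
The plan is to prove \eqref{eq:14.9} with a Bochner--Kodaira--Nakano argument for $\bar\p$ acting on sections of a negative holomorphic bundle. By Lemma~\ref{lemD.6} it suffices to work with
\[
W=p\w\mathbf{b}\in\Gamma\big(\Lambda^2\C^{n+1}\otimes\mathcal{O}(2-m)\big),
\]
equipped with the weighted metric $\rho^{-2}\abs{\cdot}^2$. Here $\Lambda^2\C^{n+1}$ is trivial and flat, and $\mathcal{O}(2-m)$ carries its Fubini--Study metric. In this formulation the nonstandard pullback $h^*T^{1,0}\CP^n$ has disappeared. Everything about $h$ is concentrated in the scalar weight $\rho=\abs{Az}^2$ on $\S^{2n+1}$, equivalently $\abs{Az}^2/\abs{z}^2$ on $\CP^n$.

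As a first reduction, write $A=UDU'$ with $U,U'\in{\rm U}(n+1)$ and $D$ positive diagonal. The factor $U$ acts isometrically on the target, and $U'$ is absorbed by the ${\rm U}(n+1)$-equivariance of $\pi$. This lets me assume $A=D={\rm diag}(a_0,\dots,a_n)$ with $a_j>0$, which keeps the later curvature computation explicit.

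Next I apply the Kähler identity to sections, $\bar\p^*\bar\p-\p^*\p=\sqrt{-1}\Lambda\Theta$, where $\Theta$ is the Chern curvature of the weighted line bundle. Discarding $\norm{\p W}^2\ge0$ gives
\[
\int\abs{\bar\p W}^2\rho^{-2}\ \ge\ \int\big\langle -\sqrt{-1}\Lambda\Theta\,W,W\big\rangle\rho^{-2}.
\]
The curvature splits as $\Theta=(2-m)\Theta_{\mathcal O(1)}+\p\bar\p\log\rho^{2}$. The first piece contributes $(m-2)$ times a positive multiple of $n$. The second piece is $2(h^*\omega_{FS}-\omega_{FS})$ up to normalization, and its trace has no sign in general.

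To supply the missing positivity, I would not drop $\norm{\p W}^2$ crudely. Instead I would exploit the algebraic structure $W=p\w\mathbf{b}$: since $W$ is decomposable with factor $p$, part of $\p W$ is forced, for instance the component $\p p\w\mathbf{b}$ coming from $\p p=A\,\rd z$ on the sphere. This should yield a lower bound on $\norm{\p W}^2$ involving $h^*\omega_{FS}$ that cancels the indefinite term. The aim is a total bound of the form $2(n-1)(m-1)$; at $m=2$ this is exactly $2(n-1)$, so the positivity must come from $\p W$ rather than from the line bundle.

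As a consistency check, in the model case $A=I$ I would verify the constant via the Folland--Stein spectrum of $\Box_b$ on $\S^{2n+1}$. There one lifts $s$ to an equivariant function, decomposes into bidegree $(p,q)$ spherical harmonics with $p-q$ fixed by the weight, and takes the lowest admissible eigenvalue, which should give $2(n-1)(m-1)$.

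The main obstacle is the non-isometric $h$, i.e. controlling the indefinite term $\sqrt{-1}\Lambda\p\bar\p\log\rho$. If the Bochner approach does not close, the fallback is to show the optimal constant is independent of $A$: deform $A$ to $I$ through positive diagonal matrices and use that $\ker\bar\p$ on these negative bundles stays trivial, so an eigenvalue bound for $\Box$ is preserved. I would try the $\p W$-cancellation first.
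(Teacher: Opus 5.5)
Your preliminary reductions are fine: the normalization $A=UDU'$ is legitimate, and the passage to $W=p\w\mathbf{b}$ with weight $\rho^{-2}$ is Lemma~\ref{lemD.6}. But the proposal stops exactly where the proof has to happen, so as written it is not a proof.

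With the weighted metric on $\mathcal{O}(2-m)$ one has $i\Theta=4h^*\omega-2m\omega$, and Bochner--Kodaira for $W$ reads
\[
\int\abs{\bar\p W}^2\rho^{-2}=\int\abs{\p_\rho W}^2\rho^{-2}+\int\bigl(2mn-4\,\mathrm{tr}_\omega h^*\omega\bigr)\abs{W}^2\rho^{-2},
\]
where $\p_\rho$ is the $(1,0)$-part of the weighted Chern connection. The zeroth-order coefficient is not bounded below uniformly in $A$: for $A=\mathrm{diag}(a_0,\dots,a_n)$ it equals $2mn-4\sum_{j\ge1}a_j^2/a_0^2$ at $[1:0:\cdots:0]$. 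Already for $A=I$, $m=2$ it is $0$, so all of $2(n-1)\norm{s}^2$ must come from $\norm{\p_\rho W}^2$.

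Note also that $h$ is not only in the weight. The constraint $W\in p\w\C^{n+1}$ moves with $A$, and it is indispensable, since for $m=2$ the constant sections of $\Lambda^2\C^{n+1}$ are holomorphic. Your assertion that decomposability ``should yield'' a bound on $\norm{\p_\rho W}^2$ that cancels the indefinite term and leaves exactly $2(n-1)(m-1)$ is therefore the entire content of the proposition. You give no computation or mechanism for it.

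The fallback is not an argument either. Triviality of $\ker\bar\p$ along a path $A_t\to I$ only keeps the bottom of the spectrum positive; it gives no quantitative bound along the path. Your model-case check is also off. For $A=I$, Bochner--Kodaira applied directly to $T^{1,0}\CP^n\otimes\mathcal{O}(-m)$ gives $\norm{\bar\p s}^2=\norm{\p s}^2+2(mn-n-1)\norm{s}^2$. This value is attained, since $H^0(\Omega^{1,0}\CP^n\otimes\mathcal{O}(m))\neq0$, and it equals $2(n-1)(m-1)$ only when $m=2$.

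The idea you are missing is the paper's dual averaging over hyperplanes. Averaging $\delta_\C(\ell(p))\abs{\ell(\mathbf{b})}^2$ over unit covectors $\ell$ gives $c\,\rho^{-1}\abs{\mathbf{b}^\perp}^2=c\abs{s}^2$, and the same with $\bar\p\mathbf{b}$ gives $c\abs{\bar\p s}^2$. By Fubini, $\int\abs{s}^2$ and $\int\abs{\bar\p s}^2$ become averages over $\ell$, with the same positive weight $\abs{A^*\ell}^{-2}$, of $\int_{H_\ell}\abs{\ell(\mathbf{b})}^2$ and $\int_{H_\ell}\abs{\ell(\bar\p\mathbf{b})}^2\ge\int_{H_\ell}\abs{\bar\p^{H_\ell}\ell(\mathbf{b})}^2$, where $H_\ell=\{[z]\mid\ell(Az)=0\}\cong\CP^{n-1}$.

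On each $H_\ell$ the scalar $\ell(\mathbf{b})$ is a section of $\mathcal{O}(1-m)$ with its standard metric. So Bochner--Kodaira there has curvature term exactly $2(m-1)(n-1)$ and no indefinite part. All dependence on $A$ sits in the weight and in the choice of hyperplane, which are common to both sides. That is how an $A$-independent constant arises, with $m-1$ coming from the degree of the line bundle and $n-1$ from $\dim_\C H_\ell$.
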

\begin{proof}
It is clear that if $n=1$, then the statement is true. Hence we  may assume $n\ge 2.$

The idea of proof  is to restrict the problem to hyperplanes by dual averaging, and then apply the Bochner--Kodaira identity to obtain the spectral gap.

Let $\rd\sigma$ be a $U(n+1)$-invariant probability measure on the dual unit sphere $S(\C^{n+1})^*$. Set
\eq{
    I \coloneqq \int_{S(\C^{n+1})^*} \delta_\C(\ell(p))\abs{\ell(\mathbf{b})}^2 \rd\sigma(\ell),
}
where $\ell\in S(\C^{n+1})^*$ and $\delta_\C$ is the Dirac measure on $\C$ centered at $0$. Note that
\eq{\label{eq:15}
    \delta_\C(\ell(p)) = \abs{p}^{-2}\delta_\C(\ell(p/\abs{p})) = \rho^{-1}\delta_\C(\ell(p/\abs{p})).
}
On the support of $\delta_\C(\ell(p/\abs{p}))$, which is the hyperplane $\{\ell(p)=0\}$, we have
\eq{\label{eq:16}
    \ell(\mathbf{b}) = \ell(\mathbf{b}^\perp).
}
Denote
\eq{
    S(p^\perp)^* \coloneqq S(\C^{n+1})^*\cap\{\ell\mid\ell(p)=0\}.
}
Using \eqref{eq:15} and \eqref{eq:16} we have
\eq{\label{eq:17}
    I = c_0(n)\rho^{-1}\int_{S(p^\perp)^*} \abs{\ell(\mathbf{b}^\perp)}^2 \rd\sigma_{S(p^\perp)^*}(\ell).
}
for some constant $c_0(n)>0$. It is clear that the right-hand side of \eqref{eq:17} is quadratic in $\mathbf{b}^\perp$ and depends only on the length of $\mathbf{b}^\perp$. Hence
\eq{
    I = c_1(n)\rho^{-1}\abs{\mathbf{b}^\perp}^2 = c_1(n)\abs{s}^2.
}
for some constant $c_1(n)>0$, where we used \eqref{eq:13}. The Fubini Theorem then gives
\eq{\label{eq:18}
    \int_{\CP^n}\abs{s}^2 = c_1(n)^{-1}\int_{S(\C^{n+1})^*} \Big(\int_{\CP^n} \delta_\C(\ell(p))\abs{\ell(\mathbf{b})}^2 \rdV_{\CP^n} \Big) \rd\sigma(\ell).
}
In order to compute the interior integral, we note that
\eq{
    \ell(p) = \ell(Az) = A^*\ell(z).
}
Since $A$ is invertible, in $S(\C^{n+1})^*$ we may write
\eq{
    A^*\ell = \abs{A^*\ell}\,\ell_0, \qquad \abs{\ell_0}=1.
}
It follows that
\eq{
    \delta_\C(\ell(p)) = \delta_\C(A^*\ell(z)) = \abs{A^*\ell}^{-2}\delta_\C(\ell_0(z)).
}
Hence
\eq{
    \int_{\CP^n} \delta_\C(\ell(p))\abs{\ell(\mathbf{b})}^2 \rdV_{\CP^n} = c_2(n)\int_{H_\ell} \abs{A^*\ell}^{-2}\abs{\ell(\mathbf{b})}^2 \rdV_{H_\ell},
}
where $H_\ell\coloneqq\{[z]\mid\ell_0(z)=0\}=\{[z]\mid\ell(p)=0\}$.
Inserting into \eqref{eq:18} gives
\eq{\label{eq:19}
    \int_{\CP^n}\abs{s}^2 = C(n)\int_{S(\C^{n+1})^*} \abs{A^*\ell}^{-2}\Big(\int_{H_\ell}\abs{\ell(\mathbf{b})}^2 \rdV_{H_\ell} \Big) \rd\sigma(\ell),
}
where $C(n)\coloneqq c_1(n)^{-1}c_2(n)$.

Similarly, applying Lemma~\ref{lemD.6} component-wise, we have
\eq{
    \int_{\CP^n}\abs{\bar{\p}s}^2 = C(n)\int_{S(\C^{n+1})^*} \abs{A^*\ell}^{-2}\Big(\int_{H_\ell}\abs{\ell(\bar{\p}\mathbf{b})}^2 \rdV_{H_\ell} \Big) \rd\sigma(\ell).
}
Note that the constant $C(n)$ is the same in both \eqref{eq:19} and \eqref{eq:19.1}. Since
\eq{
    \bar{\p}^{H_\ell}\ell(\mathbf{b}) = \ell(\bar{\p}\mathbf{b})^{H_\ell},
}
we have
\eq{\label{eq:42}
    \abs{\ell(\bar{\p}\mathbf{b})} \geq \abs{\bar{\p}^{H_\ell}(\ell(\mathbf{b}))}.
}
Hence
\eq{\label{eq:19.1}
    \int_{\CP^n}\abs{\bar{\p}s}^2 \geq C(n)\int_{S(\C^{n+1})^*} \abs{A^*\ell}^{-2}\Big( \int_{H_\ell}\abs{\bar{\p}^{H_\ell}\ell(\mathbf{b})}^2 \rdV_{H_\ell} \Big) \rd\sigma(\ell).
}

We now apply the Bochner--Kodaira--Nakano identity (see e.g. \cite{Demailly07}*{Chapter 7, Corollary 1.3})
\eq{\label{eq:20}
    \bar{\p}\bar{\p}^*+\bar{\p}^*\bar{\p} = \p\p^*+\p^*\p + [i\Theta(E),\Lambda],
}
where $E$ is a Hermitian holomorphic bundle, $\Theta(E)$ is the Chern curvature of $E$, and $\Lambda=L^*$, where $L=\omega\w$ is the Lefschetz operator.
In our case, we apply for the bundle $E = \mathcal{O}(1-m)$, and for the $\mathcal{O}(1-m)$-valued $0$-form $f\coloneqq\ell(\mathbf{b})$ on the hyperplane $H_\ell$. Since
\eq{
    i\Theta(\mathcal{O}(1)) = 2\omega,
}
where $\omega$ is the K\"ahler form on $H_\ell\cong\CP^{n-1}$, we have
\eq{
    i\Theta(\mathcal{O}(1-m)) = -2(m-1)\omega,
}
and
\eq{
    [i\Theta(\mathcal{O}(1-m)),\Lambda]f = -2(m-1)[\omega,\Lambda]f = 2(m-1)\abs{\omega}^2f = 2(m-1)(n-1)f.
}
Therefore, \eqref{eq:20} implies that
\eq{\label{eq:21}
    \int_{H_\ell}\abs{\bar{\p}^{H_\ell}\ell(\mathbf{b})}^2 \rdV_{H_\ell} &= \int_{H_\ell}\abs{\p^{H_\ell}\ell(\mathbf{b})}^2 \rdV_{H_\ell} + 2(n-1)(m-1)\int_{H_\ell}\abs{\ell(\mathbf{b})}^2 \rdV_{H_\ell} \\
    &\geq 2(n-1)(m-1)\int_{H_\ell}\abs{\ell(\mathbf{b})}^2 \rdV_{H_\ell}.
}
Combining \eqref{eq:19}, \eqref{eq:19.1}, and \eqref{eq:21} gives the desired estimate \eqref{eq:14.9}.    
\end{proof}

From \eqref{eq:10.1} and \eqref{eq:14.9}, we see that for $m\geq2$,
\eq{\label{eq:21.1}
    Q_m(s) \geq \{ 4(n-1)(m-1) - m(2n-m) \} \int_{\CP^n}\abs{s}^2 = (m-2)(m+2n-2)\int_{\CP^n}\abs{s}^2 \geq 0.
}

\subsection{The spectral gap for \texorpdfstring{$m=1$}{m=1}} \label{subsection5.5}

In this case, the quadratic form \eqref{eq:10.1} becomes
\eq{\label{eq:22}
    Q_1(s) = 2\int_{\CP^n} \abs{\bar{\p}s}^2 - (2n-1)\int_{\CP^n}\abs{s}^2, \quad s\in\Gamma(\mathcal{E}(1)).
}

First, the $m=1$ case clearly provides index at least $2n+2$. In fact, \eqref{eq:12} now becomes
\eq{
    0 \to \mathcal{O}(-1) \to \underline{\C}^{n+1} \to \mathcal{E}(1) \to 0.
}
It induces a long exact sequence of cohomology
\eq{
    0 \to H^0(\mathcal{O}(-1)) \to H^0(\underline{\C}^{n+1}) \to H^0(\mathcal{E}(1)) \to H^1(\mathcal{O}(-1)) \to \cdots.
}
Since
\eq{
    H^0(\mathcal{O}(-1)) = H^1(\mathcal{O}(-1)) = 0,
}
It follows that
\eq{
    {\rm dim}_\C H^0(\mathcal{E}(1)) = {\rm dim}_\C H^0(\underline{\C}^{n+1}) = n+1.
}
Therefore, each non-zero section $s\in\Gamma(H^0(\mathcal{E}(1)))$ gives
\eq{\label{eq:23}
    Q_1(s) = - (2n-1)\int_{\CP^n}\abs{s}^2 < 0.
}
Hence the $m=1$ case clearly provides index at least $2n+2$.

We now prove that there is no more index, which is a direct consequence of the following spectral gap.

\begin{proposition}\label{propD.8}
If $s\perp H^0(\mathcal{E}(1))$, then
\eq{
    \int_{\CP^n}\abs{\bar{\p}s}^2 \geq 2n\int_{\CP^n}\abs{s}^2.
}
\end{proposition}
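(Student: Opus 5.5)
The plan is to reuse the hyperplane-averaging device from the proof of \eqref{eq:14.9}, now for $m=1$, and to reduce Proposition~\ref{propD.8} to the first nonzero eigenvalue of the scalar $\bar\p$-Laplacian on $H_\ell\cong\CP^{n-1}$. For $m=1$ the representative $\mathbf{b}$ of $s=q(\mathbf{b})$ in \eqref{eq:12} is just a $\C^{n+1}$-valued function, since $\mathcal{O}(1-m)=\mathcal{O}(0)$. For each $\ell$, $f_\ell\coloneqq\ell(\mathbf{b})$ restricted to $H_\ell$ is therefore an honest function, independent of the choice of representative because $\ell(p)=0$ on $H_\ell$. The derivation of \eqref{eq:19} and \eqref{eq:19.1} did not use $m$, so
\begin{align}
\int_{\CP^n}\abs{s}^2&=C(n)\int_{S(\C^{n+1})^*}\abs{A^*\ell}^{-2}\int_{H_\ell}\abs{f_\ell}^2\,\rdV_{H_\ell}\,\rd\sigma(\ell),\\
\int_{\CP^n}\abs{\bar\p s}^2&\ge C(n)\int_{S(\C^{n+1})^*}\abs{A^*\ell}^{-2}\int_{H_\ell}\abs{\bar\p^{H_\ell}f_\ell}^2\,\rdV_{H_\ell}\,\rd\sigma(\ell).
\end{align}

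On $\CP^{n-1}$ with holomorphic sectional curvature $4$, the first nonzero eigenvalue of the Laplacian on functions is $4n$, and $\bar\p^*\bar\p=\tfrac12\Delta$ on functions. This gives the Poincar\'e inequality
\begin{align}
\int_{H_\ell}\abs{\bar\p f_\ell}^2\ge 2n\int_{H_\ell}\abs{f_\ell-\bar f_\ell}^2,
\end{align}
where $\bar f_\ell$ is the mean of $f_\ell$ over $H_\ell$. The constant $2n$ matches the statement exactly, which suggests this is the intended route. Inserting it into the averaged identities yields
\begin{align}
\int\abs{\bar\p s}^2\ge 2n\int\abs{s}^2-2n\,C(n)\int\abs{A^*\ell}^{-2}\,\mathrm{vol}(H_\ell)\,\abs{\bar f_\ell}^2\,\rd\sigma(\ell).
\end{align}
It therefore remains to show that the defect term $D(s)\coloneqq C(n)\int\abs{A^*\ell}^{-2}\mathrm{vol}(H_\ell)\abs{\bar f_\ell}^2\rd\sigma$ vanishes when $s\perp H^0(\mathcal{E}(1))$.

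To handle $D(s)$ I would first use that $\bar\p q(\mathbf{c})=0$ for constant $\mathbf{c}$, and that $s\perp H^0(\mathcal{E}(1))$ gives $\int\abs{s}^2\le\int\abs{s-q(\mathbf{c})}^2$ by Pythagoras. Hence it suffices to prove the inequality for $s-q(\mathbf{c})$ for a single well-chosen $\mathbf{c}\in\C^{n+1}$, i.e.\ to make the averages $\bar f_\ell$ vanish, or at least to make $D$ small. The key structural claim to establish is that $D$ is a Hermitian form on $\Gamma(\mathcal{E}(1))$ which factors through the orthogonal projection onto the $(n+1)$-dimensional space $H^0(\mathcal{E}(1))=q(\C^{n+1})$. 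For $s=q(\mathbf{c})$ one has $\bar f_\ell=\ell(\mathbf{c})$, so on $H^0$ the form $D$ reproduces (up to the factor $2n$ and comparison) the $L^2$ norm. The point is to show $D(s)=0$ whenever $s$ is $L^2$-orthogonal to every $q(\mathbf{c})$, by expressing $\int\langle s,q(\mathbf{c})\rangle$ through the same averaging formula. By polarization of \eqref{eq:19}, this is $C(n)\int\abs{A^*\ell}^{-2}\int_{H_\ell}f_\ell\overline{\ell(\mathbf{c})}$, i.e.\ $C(n)\int\abs{A^*\ell}^{-2}\mathrm{vol}(H_\ell)\bar f_\ell\,\overline{\ell(\mathbf{c})}\,\rd\sigma$.

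The main obstacle is upgrading this orthogonality to $\ell\mapsto\ell(\mathbf{c})$ into $\bar f_\ell\equiv0$. I would try to prove that $\ell\mapsto\bar f_\ell$ always lies in the span of the linear functions $\ell\mapsto\ell(\mathbf{c})$, viewed in $L^2$ with weight $\abs{A^*\ell}^{-2}\mathrm{vol}(H_\ell)$. The argument would go through equivariance: $\bar f_{e^{i\theta}\ell}=e^{i\theta}\bar f_\ell$, plus a Radon-transform or representation-theoretic argument after reducing $A$ to a positive diagonal matrix via ${\rm U}(n+1)$ acting on both sides. If this fails for general $A$, the fallback is a direct Bochner--Kodaira--Nakano computation on $\CP^n$ for $\mathcal{E}(1)$ with its induced quotient metric. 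That computation handles $A=I$ (where $\mathcal{E}(1)$ is the universal quotient bundle and the gap $2n$ follows from ${\rm U}(n+1)$-harmonic analysis), and one would then track how the curvature changes with $A$.
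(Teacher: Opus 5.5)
Your chain of inequalities is valid and yields
\[
\int_{\CP^n}\abs{\bar\p s}^2\ \ge\ 2n\Big(\int_{\CP^n}\abs{s}^2-D(s)\Big).
\]
However, the step everything hinges on is false: that $D(s)=0$ whenever $s\perp H^0(\mathcal{E}(1))$, equivalently that $D$ is the squared norm of the orthogonal projection onto $H^0(\mathcal{E}(1))$. So this route cannot be completed.

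For $n=1$ the failure is immediate. Each $H_\ell$ is a point, so $\bar f_\ell=f_\ell$ and $D(s)=\int\abs{s}^2$, and your bound degenerates to $\int\abs{\bar\p s}^2\ge 0$.

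For every $n\ge1$ there is an explicit counterexample. Take $A=I$, $\mathbf{v}=(1,0,\dots,0)$ and $\mathbf{b}=\phi\,\mathbf{v}$, where $\phi([z])\coloneqq P^{(n-1,0)}_2(2\abs{z_0}^2-1)$ for $\abs{z}=1$ is a degree-two Jacobi polynomial. Under the Fubini--Study volume, $t=\abs{z_0}^2$ has density proportional to $(1-t)^{n-1}$ on $[0,1]$. Hence $\phi$ is orthogonal to $1$ and $t$, and, by the symmetries fixing $\mathbf{v}$, to every $z_i\bar z_j$. The pointwise product $\<q(\mathbf{v}),q(\mathbf{c})\>=\<\mathbf{v},\mathbf{c}\>-\<\mathbf{v},z\>\<z,\mathbf{c}\>$ (Hermitian products) is a polynomial of bidegree at most $(1,1)$, so $s=\phi\,q(\mathbf{v})\perp H^0(\mathcal{E}(1))$. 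Yet for $\ell(w)=w_0$ one has $H_\ell=\{z_0=0\}$, on which $f_\ell=\phi\equiv P_2^{(n-1,0)}(-1)=1$. So $\bar f_\ell=1$, and by continuity in $\ell$, $D(s)>0$.

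Subtracting $q(\mathbf{c})$ does not help. Your own polarization gives $D(s,q(\mathbf{c}))=\<s,q(\mathbf{c})\>_{L^2}$ and $D(q(\mathbf{c}))=\norm{q(\mathbf{c})}^2$. Hence for $s\perp H^0(\mathcal{E}(1))$ the quantity $\int\abs{s-q(\mathbf{c})}^2-D(s-q(\mathbf{c}))$ is independent of $\mathbf{c}$. The missing $2nD(s)$ would have to come from the normal components of $\ell(\bar\p\mathbf{b})$ that you discarded, and nothing in your argument controls them. The hyperplane averaging works in the paper for $m\ge2$ because there the hyperplane operator acts on the negatively curved $\mathcal{O}(1-m)$ and has a positive gap. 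For $m=1$ it is the scalar $\bar\p$ on $H_\ell$, and its kernel (the constants) is exactly where you lose.

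The missing idea is to use the orthogonality globally, by duality, and to let curvature act on $(0,1)$-forms rather than on sections. An estimate applied directly to sections must be compatible with $H^0(\mathcal{E}(1))\neq0$, so it cannot produce the gap by itself. The paper's argument runs as follows.

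Since $s\perp\ker\bar\p$, write $s=\bar\p^*U$ with $U\in\Omega^{0,1}(\mathcal{E}(1))$ and $\bar\p U=0$. Via $K_{\CP^n}^{-1}\cong\mathcal{O}(n+1)$, $U$ is identified isometrically with an $(n,1)$-form with values in $h^*T^{1,0}\CP^n\otimes\mathcal{O}(n)$. Three facts then apply. First, $h^*T^{1,0}\CP^n$ is Nakano semipositive: for Fubini--Study, $\sum R_{i\bar j\alpha\bar\beta}\xi_{i\alpha}\bar\xi_{j\beta}=\sum_{i,\alpha}\abs{\xi_{i\alpha}+\xi_{\alpha i}}^2\ge0$, and pulling back by $\rd h$ preserves this. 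Second, $i\Theta(\mathcal{O}(n))=2n\omega$. Third, $[L,\Lambda]=1$ on $(n,1)$-forms. The Bochner--Kodaira--Nakano identity then gives $\norm{s}^2=\norm{\bar\p^*U}^2\ge 2n\norm{U}^2$, and $\norm{s}^2=\<\bar\p s,U\>\le\norm{\bar\p s}\,\norm{U}$ finishes.

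Because only Nakano semipositivity of the pulled-back bundle enters, this treats all $A\in{\rm GL}(n+1,\C)$ at once. Neither the ${\rm U}(n+1)$-harmonic analysis at $A=I$ nor the tracking of curvature in $A$ from your fallback is needed.
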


Consider the operator
\eq{
    \bar{\p}:\Gamma(\mathcal{E}(1)) \to \Omega^{0,1}(\mathcal{E}(1)).
}
In order to obtain a good enough constant, we apply \eqref{eq:20} to a $(n,1)$-type form $\alpha$. Note that the Euler sequence gives
\eq{\label{eq:37}
    \Omega^{n,0}T^{1,0}\CP^n \cong {\rm det}T^{1,0}\CP^n \cong \mathcal{O}(n+1),
}
hence
\eq{
    \Omega^{n,0}(T^{1,0})^*\CP^n \cong \mathcal{O}(-(n+1)),
}
and consequently 
\eq{\label{eq:36}
    \Omega^{0,1}(\mathcal{E}(1)) &\cong \Gamma(\Omega^{0,1}(T^{1,0})^*\CP^n \otimes \mathcal{E}(1)) \\
    &\cong \Gamma(\Omega^{n,1}(T^{1,0})^*\CP^n \otimes\mathcal{O}(n+1) \otimes \mathcal{E}(1)) \\
    &\cong \Omega^{n,1}(h^*T^{1,0}\CP^n\otimes\mathcal{O}(n)).
}
Therefore, we choose
\eq{
    E = h^*T^{1,0}\CP^n\otimes\mathcal{O}(n), \quad \alpha\in\Omega^{n,1}(h^*T^{1,0}\CP^n\otimes\mathcal{O}(n)).
}

\begin{lemma}\label{lemJ.12}
The tangent bundle $T^{1,0}\CP^n$ is Nakano semi-positive, namely
\eq{
    i\Theta(T^{1,0}\CP^n)\geq0. 
}
\end{lemma}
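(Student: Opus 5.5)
The plan is to compute the Chern curvature of $T^{1,0}\CP^n$ with the Fubini--Study metric directly from the curvature identity recalled in Section~\ref{sec4}, and then to check positivity of the resulting Hermitian form on $T^{1,0}\CP^n\otimes T^{1,0}\CP^n$. The Chern connection of the holomorphic tangent bundle of a K\"ahler manifold is the Levi-Civita connection restricted to $T^{1,0}$. Hence
\eq{
\<i\Theta(T^{1,0}\CP^n)(A,\bar B)C,\bar D\>
}
is, up to the sign and normalization conventions fixed in the paper, the complexified Riemann tensor $-R(A,\bar B,C,\bar D)$ with $A,B,C,D\in T^{1,0}$.

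\textbf{Step 1: curvature components.} Using $R(X,Y)Z = \<Y,Z\>X - \<X,Z\>Y + \<JY,Z\>JX - \<JX,Z\>JY + 2\<X,JY\>JZ$ and the type rules $\<V,V\>=0$ and $J=\pm i$ on $T^{1,0}$, $T^{0,1}$, I will show that the only surviving terms give
\eq{
R(A,\bar B,C,\bar D) = -2\big(\<A,\bar B\>\<C,\bar D\>+\<A,\bar D\>\<C,\bar B\>\big).
}
This is consistent with the special case $R(A,\bar A,B,\bar B)=-2(\abs{A}^2\abs{B}^2+\abs{\<A,\bar B\>}^2)$ recorded earlier, which I will use to fix the overall sign. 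The curvature form therefore reads, in a unitary frame $\{W_\alpha\}$ with dual coframe $\theta^\alpha$,
\eq{
i\Theta_{\alpha\bar\beta\gamma\bar\delta} = 2(\delta_{\alpha\beta}\delta_{\gamma\delta}+\delta_{\alpha\delta}\delta_{\gamma\beta}),
}
up to a positive constant depending on normalization.

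\textbf{Step 2: Nakano positivity.} For a tensor $\tau=\sum\tau^{\alpha\gamma}W_\alpha\otimes W_\gamma\in T^{1,0}\otimes T^{1,0}$, the Nakano form is
\eq{
\sum i\Theta_{\alpha\bar\beta\gamma\bar\delta}\,\tau^{\alpha\gamma}\overline{\tau^{\beta\delta}} = 2\Big(\sum_{\alpha,\gamma}\abs{\tau^{\alpha\gamma}}^2 + \sum_{\alpha,\gamma}\tau^{\alpha\gamma}\overline{\tau^{\gamma\alpha}}\Big).
}
Writing $\tau=\tau_s+\tau_a$ as the sum of its symmetric and antisymmetric parts, the bracket equals $2\abs{\tau_s}^2\ge 0$. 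This proves $i\Theta(T^{1,0}\CP^n)\ge 0$ in the sense of Nakano. Equality holds exactly on antisymmetric tensors, so the bundle is semi-positive but not strictly positive, which is why the lemma only claims semi-positivity.

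\textbf{Main obstacle.} The delicate point is the bookkeeping of conventions: which slots of $R$ correspond to the form indices and which to the endomorphism indices in $i\Theta$, and the sign of $R$ relative to $\Theta$, given the paper's convention $R(X,Y,Z,W)=\<R(X,Y)W,Z\>$. I will anchor the sign by checking the scalar case $A=B=C=D$: the holomorphic sectional curvature is positive, so $i\Theta(A,\bar A)A\cdot\bar A$ must be positive. As a cross-check, the Euler sequence expresses $T^{1,0}\CP^n$ as a quotient of $\underline{\C}^{n+1}\otimes\mathcal{O}(1)$, which is Nakano positive. Quotients only gain curvature in the Griffiths sense, so this confirms at least Griffiths positivity and the overall sign. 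The Nakano statement itself still relies on the explicit computation in Step 2.
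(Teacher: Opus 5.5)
Your proposal is correct and follows the paper's proof. Both use the Fubini--Study Chern curvature $2(\delta_{\alpha\beta}\delta_{\gamma\delta}+\delta_{\alpha\delta}\delta_{\gamma\beta})$ and reduce the Nakano form to $2\sum|\tau^{\alpha\gamma}|^2+2\sum\tau^{\alpha\gamma}\overline{\tau^{\gamma\alpha}}$, which the paper writes as $\sum_{\alpha,\gamma}|\tau^{\alpha\gamma}+\tau^{\gamma\alpha}|^2$ and you write as $4|\tau_s|^2$ (the same quantity). Your Step 1 derivation of the components from the real curvature identity is also correct, including the sign check against $R(A,\bar A,B,\bar B)$; it supplies a detail that the paper simply quotes.
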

\begin{proof}
Let $\{e_\alpha\}$ be a local frame of $E$, and denote the Chern curvature tensor by $R_{i\bar{j}\alpha\bar{\beta}}$, then
\eq{
    i\Theta(E) = \sum_{i,j,\alpha,\beta}R_{i\bar{j}\alpha\bar{\beta}}\rd z_i\w\rd\bar{z}_j\otimes e_\alpha^*\otimes e_\beta.
}
We need to prove that
\eq{
    \sum_{i,j,\alpha,\beta}R_{i\bar{j}\alpha\bar{\beta}}\xi_{i\alpha}\bar{\xi}_{j\beta} \geq 0
}
for any complex matrix $\xi=(\xi_{i\alpha})$. The curvature of $\CP^n$ w.r.t. the Fubini-Study metric is
\eq{
    R_{i\bar{j}\alpha\bar{\beta}} = 2( \delta_{ij}\delta_{\alpha\beta}+\delta_{i\beta}\delta_{j\alpha} ), 
}
hence
\eq{
    \sum_{i,j,\alpha,\beta}R_{i\bar{j}\alpha\bar{\beta}}\xi_{i\alpha}\bar{\xi}_{j\beta} = 2\sum_{i,\alpha}\abs{\xi_{i\alpha}}^2 + 2\sum_{i,\alpha}\xi_{i\alpha}\bar{\xi}_{\alpha i} = \sum_{i,\alpha}\abs{\xi_{i\alpha}+\xi_{\alpha i}}^2 \geq 0.
}
The conclusion follows.
\end{proof}

\begin{lemma}\label{lemJ.13}
The pull-back bundle $h^*T^{1,0}\CP^n$ is also Nakano semi-positive, namely
\eq{
    i\Theta(h^*T^{1,0}\CP^n)\geq0.
}
\end{lemma}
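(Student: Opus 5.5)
The plan is to compute the Chern curvature of the pull-back bundle explicitly and reduce its positivity to that of $T^{1,0}\CP^n$ established in Lemma~\ref{lemJ.12}. I will do this in two steps.

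First, record the curvature identity for pull-backs. For any holomorphic map $h$ and Hermitian holomorphic bundle $F$ on the target, the pull-back metric and holomorphic structure on $h^*F$ have Chern connection $h^*\nabla^F$. Consequently
\[
\Theta(h^*F)=h^*\Theta(F),
\]
so in local coordinates $z$ on the source and $w$ on the target,
\[
i\Theta(h^*F)_{i\bar j\alpha\bar\beta}=\sum_{k,l} R^F_{k\bar l\alpha\bar\beta}\,\frac{\p h^k}{\p z_i}\,\overline{\frac{\p h^l}{\p z_j}}.
\]
This uses that $h$ is holomorphic, so $h^*\rd w_k=\sum_i \frac{\p h^k}{\p z_i}\rd z_i$ has no $\rd\bar z$ part.

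Second, test Nakano positivity. Given a complex matrix $\xi=(\xi_{i\alpha})$, set
\[
\eta_{k\alpha}\coloneqq\sum_i \frac{\p h^k}{\p z_i}\,\xi_{i\alpha}.
\]
Then
\[
\sum R^{h^*}_{i\bar j\alpha\bar\beta}\,\xi_{i\alpha}\bar\xi_{j\beta}=\sum R_{k\bar l\alpha\bar\beta}\,\eta_{k\alpha}\bar\eta_{l\beta}.
\]
By Lemma~\ref{lemJ.12} the right-hand side is nonnegative; concretely, it equals $\sum_{k,\alpha}\abs{\eta_{k\alpha}+\eta_{\alpha k}}^2$ in unitary frames. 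Hence $h^*T^{1,0}\CP^n$ is Nakano semi-positive.

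The only real obstacle is frame bookkeeping. Lemma~\ref{lemJ.12} uses a formula valid in unitary frames at a point, with the form index and the bundle index living on the same space. On the pull-back, the form indices refer to the source and the bundle indices to the target, so the symmetric expression $\xi_{i\alpha}+\xi_{\alpha i}$ no longer makes sense directly. The fix is to work pointwise: choose unitary frames at $h([z])$, and note that Nakano positivity of $F$ at $h([z])$ means the Hermitian form on $T^{1,0}\otimes F$ is nonnegative. The pull-back form is the composition of this form with the linear map $\rd h\otimes\mathrm{id}$, and such a composition is always nonnegative. No degree-one hypothesis is needed; the argument works for any holomorphic $h$.
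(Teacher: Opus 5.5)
Your proposal is correct and matches the paper's proof: both write the Nakano form of $h^*T^{1,0}\CP^n$ as the Nakano form of $T^{1,0}\CP^n$ evaluated on $\eta_{k\alpha}=\sum_i \frac{\p h^k}{\p z_i}\xi_{i\alpha}$, and conclude by Lemma~\ref{lemJ.12}. Your remarks on the pull-back Chern connection and on using unitary frames at $h([z])$ only spell out what the paper leaves implicit.
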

\begin{proof}
For any complex matrix $\xi=(\xi_{i\alpha})$, we have
\eq{\label{eq:34}
    \sum_{i,j,\alpha,\beta}R^{h^*T^{1,0}\CP^n}_{i\bar{j}\alpha\bar{\beta}}\xi_{i\alpha}\bar{\xi}_{j\beta} = \sum_{\mu,\nu,\alpha,\beta}R_{\mu\bar{\nu}\alpha\bar{\beta}}\sum_i h^\mu_i\xi_{i\alpha} \sum_j \overline{h^\nu_j\xi_{j\beta}},
}
where $h_i^\mu\coloneqq \frac{\p h^\mu}{\p z_i}$ is the matrix of $\rd h$. 
Hence, the right-hand side of \eqref{eq:34} is non-negative by Lemma~\ref{lemJ.12}  and  the claim follows.
\end{proof}

\begin{lemma}\label{lemJ.14}
We have
\eq{
    i\Theta(h^*T^{1,0}\CP^n\otimes\mathcal{O}(n))\geq 2n\,\rid_{h^*T^{1,0}\CP^n}\otimes\omega.
}
\end{lemma}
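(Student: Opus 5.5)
We use the standard formula for the Chern curvature of a tensor product of Hermitian holomorphic bundles,
\[
i\Theta(E\otimes F)=i\Theta(E)\otimes\rid_F+\rid_E\otimes i\Theta(F),
\]
applied with $E=h^*T^{1,0}\CP^n$ and $F=\mathcal{O}(n)$. Since $F$ is a line bundle, $\rid_F$ is scalar. The statement therefore splits into a curvature bound for each factor.

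\textbf{The line bundle factor.} As recorded in the proof of the $m\ge 2$ estimate, with our normalization (holomorphic sectional curvature $4$) we have $i\Theta(\mathcal{O}(1))=2\omega$. By additivity of curvature under tensor powers,
\[
i\Theta(\mathcal{O}(n))=2n\,\omega .
\]
This contributes exactly the term $2n\,\rid_{h^*T^{1,0}\CP^n}\otimes\omega$.

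\textbf{The pull-back factor.} By Lemma~\ref{lemJ.13}, $i\Theta(h^*T^{1,0}\CP^n)\ge 0$ in the sense of Nakano. Adding a Nakano semi-positive term to $2n\,\rid\otimes\omega$ preserves the Nakano inequality. The reason is that the Nakano quadratic form $\sum R_{i\bar j\alpha\bar\beta}\xi_{i\alpha}\bar\xi_{j\beta}$ is additive in the curvature. For the term $2n\,\rid\otimes\omega$, in a unitary frame it equals $2n\sum_{i,\alpha}\abs{\xi_{i\alpha}}^2$.

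\textbf{The main point to check.} The only care needed is that the metrics are compatible. On $\mathcal{O}(n)$ we use the metric induced from the Fubini--Study/tautological metric, so that the curvature is exactly $2n\omega$. On $h^*T^{1,0}\CP^n$ we use the pull-back Fubini--Study metric. These are the metrics for which $\abs{s}^2$ and $\abs{\bar\p s}^2$ were computed in Lemma~\ref{lemD.3}. With these choices, the Chern connection of the tensor product is the tensor product connection, and the curvature formula above holds. Combining the three steps gives
\[
i\Theta(h^*T^{1,0}\CP^n\otimes\mathcal{O}(n))\ \ge\ 2n\,\rid\otimes\omega
\]
in the Nakano sense, which is the claim.
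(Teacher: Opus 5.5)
Your proposal is correct and follows the paper's proof essentially verbatim: split the Chern curvature of the tensor product, use $i\Theta(\mathcal{O}(n))=2n\,\omega$, and invoke the Nakano semi-positivity of $h^*T^{1,0}\CP^n$ from Lemma~\ref{lemJ.13}. The only difference is that you cite $i\Theta(\mathcal{O}(1))=2\omega$ from the proof of the $m\ge 2$ estimate, where it is only asserted. The paper instead verifies it inside this lemma by a local computation with the frame $e=(1,w_1,\dots,w_n)$, $\abs{e}^2=S=1+\abs{w}^2$, and $\omega=\frac{i}{2}\p\bar{\p}\log S$.
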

\begin{proof}
The curvature of the tensor product is
\eq{
    i\Theta(h^*T^{1,0}\CP^n\otimes\mathcal{O}(n)) = i\Theta(h^*T^{1,0}\CP^n)\otimes\rid_{\mathcal{O}(n)} + \rid_{h^*T^{1,0}\CP^n}\otimes i\Theta(\mathcal{O}(n)).
}
Note that
\eq{\label{eq:35}
    i\Theta(\mathcal{O}(1)) = 2\omega,
}
where $\omega$ is now the K\"ahler form of $\CP^n$. In fact, in the coordinates
\eq{
    [z_0:\cdots:z_n]=[1:w_1:\cdots:w_n],  \quad S=1+\abs{w}^2,
}
the holomorphic frame of $\mathcal{O}(-1)$ is
\eq{
    e = (1,w_1,\cdots,w_n), \quad \abs{e}^2 = S.
}
Hence the curvature is
\eq{
    i\Theta(\mathcal{O}(-1)) = -i\p\bar{\p}\log S, \quad i\Theta(\mathcal{O}(1)) = i\p\bar{\p}\log S.
}
The Fubini-Study metric gives the K\"ahler form
\eq{
    \omega = \frac{i}{2}\p\bar{\p}\log S,
}
hence we have \eqref{eq:35}. As a consequence,
\eq{
    i\Theta(\mathcal{O}(n)) = 2n\omega,
}
and hence Lemma~\ref{lemJ.13} yields
\eq{
    i\Theta(h^*T^{1,0}\CP^n\otimes\mathcal{O}(n)) \geq 2n\,\rid_{h^*T^{1,0}\CP^n}\otimes\omega,
}
the claim.
\end{proof}

\begin{lemma}\label{lemJ.15}
For any $\alpha\in\Omega^{n,1}(h^*T\CP^n\otimes\mathcal{O}(n))$, we have
\eq{
    \int_{\CP^n} \abs{\bar{\p}\alpha}^2 + \int_{\CP^n} \abs{\bar{\p}^*\alpha}^2 \geq 2n\int_{\CP^n} \abs{\alpha}^2.
}
\end{lemma}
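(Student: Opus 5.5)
We apply the Bochner--Kodaira--Nakano identity \eqref{eq:20} to the Hermitian holomorphic bundle $E=h^*T^{1,0}\CP^n\otimes\mathcal{O}(n)$ over the compact K\"ahler manifold $(\CP^n,\omega)$, acting on $E$-valued $(n,1)$-forms $\alpha$. Pairing \eqref{eq:20} with $\alpha$ and integrating gives
\begin{equation*}
\int_{\CP^n}\abs{\bar{\p}\alpha}^2+\int_{\CP^n}\abs{\bar{\p}^*\alpha}^2
=\int_{\CP^n}\abs{\p\alpha}^2+\int_{\CP^n}\abs{\p^*\alpha}^2+\int_{\CP^n}\big\<[i\Theta(E),\Lambda]\alpha,\alpha\big\>.
\end{equation*}
Since $\alpha$ has bidegree $(n,1)$, $\p\alpha$ would be an $(n+1,1)$-form, which vanishes on an $n$-dimensional manifold, so $\p\alpha=0$. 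We simply drop the nonnegative term $\int\abs{\p^*\alpha}^2$. It remains to bound the curvature term from below pointwise by $2n\abs{\alpha}^2$.

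For the curvature term, use Lemma~\ref{lemJ.14}: $i\Theta(E)=\Theta_1+2n\,\omega\otimes\rid_E$, where $\Theta_1\coloneqq i\Theta(h^*T^{1,0}\CP^n)\otimes\rid_{\mathcal{O}(n)}$ is Nakano semi-positive by Lemma~\ref{lemJ.13}; tensoring with a line bundle's identity preserves Nakano semi-positivity. The operator $[\,\cdot\,,\Lambda]$ is linear in the curvature, so we treat the two pieces separately.

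\emph{First piece.} We invoke the standard fact (Demailly, Chapter VII, Lemma 7.2) that for a Nakano semi-positive curvature form $\Theta_1$ and an $(n,q)$-form $\alpha$ one has $\<[\Theta_1,\Lambda]\alpha,\alpha\>\ge0$. In a frame diagonalizing $\omega$ this reads explicitly as $\sum_{j,k,\lambda,\mu}c_{jk\lambda\mu}\,\alpha_{J\setminus j,\lambda}\overline{\alpha_{K\setminus k,\mu}}\ge0$, i.e. the Nakano form evaluated on a suitable tensor.

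\emph{Second piece.} For the scalar part we use the identity $[L,\Lambda]=(p+q-n)$ on $(p,q)$-forms. With $p=n$, $q=1$ this gives $[2n\omega,\Lambda]\alpha=2n(n+1-n)\alpha=2n\alpha$.

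Adding the two pieces gives $\<[i\Theta(E),\Lambda]\alpha,\alpha\>\ge 2n\abs{\alpha}^2$ pointwise, and integrating yields the lemma. The main point needing care is the normalization: the statement $[L,\Lambda]=(p+q-n)$ uses the same convention for $\omega$ and $\Lambda$ under which $i\Theta(\mathcal{O}(1))=2\omega$. I would check this consistency against the hyperplane computation earlier in this section, where $[\omega,\Lambda]=-(n-1)$ on functions on $\CP^{n-1}$; that computation is exactly the case $p=q=0$ of the same identity, so the conventions agree.
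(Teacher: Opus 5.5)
Your proposal is correct and follows the paper's proof: the Bochner--Kodaira--Nakano identity on $E$-valued $(n,1)$-forms, the curvature lower bound from Lemma~\ref{lemJ.14}, and the Kähler identity $[L,\Lambda]=(p+q-n)$ with $p=n$, $q=1$. You also make explicit a step the paper leaves implicit, namely Demailly's lemma that a Nakano semi-positive curvature term contributes nonnegatively on $(n,q)$-forms; this is what turns the Nakano bound into $\langle[i\Theta(E),\Lambda]\alpha,\alpha\rangle\ge 2n\abs{\alpha}^2$.
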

\begin{proof}
Using the K\"ahler identity (see for instance \cite{Demailly07}*{Chapter 6, Corollary 5.9})
\eq{
    [L,\Lambda]\alpha = (p+q-n)\alpha, \quad \alpha\in\Omega^{p,q},
}
we have for our case $\alpha\in\Omega^{n,1}$ that
\eq{
    [\omega,\Lambda]\alpha = [L,\Lambda]\alpha = \alpha.
}
The conclusion then follows from \eqref{eq:20} and Lemma~\ref{lemJ.14}.
\end{proof}

In view of \eqref{eq:36}, we may isometrically identify each
\eq{
    \alpha\in\Omega^{n,1}(h^*T\CP^n\otimes\mathcal{O}(n))
}
with
\eq{
    U\in\Omega^{0,1}(h^*T\CP^n\otimes\mathcal{O}(-1)).
}
We now prove Proposition~\ref{propD.8}.

\begin{proof}[Proof of Proposition~\ref{propD.8}]
Recall the operator
\eq{
    \bar{\p}:\Gamma(\mathcal{E}(1)) \to \Omega^{0,1}(\mathcal{E}(1)).
}
Since
\eq{
    {\rm im}(\bar{\p}^*) = {\rm ker}(\bar{\p})^\perp,
}
the orthogonality condition
\eq{
    s\in\Gamma(\mathcal{E}(1)), \quad s\perp H^0(\mathcal{E}(1))={\rm ker}(\bar{\p})
}
implies that there exists $U\in\Omega^{0,1}(\mathcal{E}(1))$, such that
\eq{
    \bar{\p}^*U = s.
}
In view of the Hodge decomposition, we may further assume as a normalization that
\eq{
    \bar{\p}U = 0.
}
Inserting into Lemma~\ref{lemJ.15} yields (with the isometric identification $\alpha\mapsto U$)
\eq{\label{eq:38}
    \int_{\CP^n} \abs{s}^2 \geq 2n\int_{\CP^n} \abs{U}^2.
}
Moreover, we have
\eq{\label{eq:39}
    \int_{\CP^n} \abs{s}^2 = \int_{\CP^n} \<s,\bar{\p}^*U\> = \int_{\CP^n} \<\bar{\p}s,U\> \leq \Big( \int_{\CP^n} \abs{\bar{\p}s}^2 \Big)^{\frac{1}{2}} \Big( \int_{\CP^n} \abs{U}^2 \Big)^{\frac{1}{2}}.
}
Combining \eqref{eq:38} and \eqref{eq:39} gives
\eq{
    \int_{\CP^n} \abs{\bar{\p}s}^2 \geq 2n\int_{\CP^n} \abs{s}^2.
}
The claim follows.
\end{proof}

Finally, we deduce Theorem~\ref{thm5.1}.

\begin{proof}[Proof of Theorem~\ref{thm5.1}]
It is now a direct consequence of \eqref{eq:21.1}, \eqref{eq:23}, and Proposition~\ref{propD.8}.
\end{proof}

\section{Proof of Theorem~\ref{thm_nullity}: nullity part}\label{sec6}

In this section, we prove the following result, which completes the proof of Theorem~\ref{thm_nullity}.

\begin{theorem}\label{thmD.1}
For any degree-one holomorphic map $h:\CP^n\to\CP^n$, and $u=h\circ\pi$, we have
\eq{
    {\rm nul}(u)=3n^2+5n.
}
\end{theorem}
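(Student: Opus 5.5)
The nullity equals the sum over weights of the kernels of the quadratic forms $Q_m$ from \eqref{eq:10.1}. By Lemma~\ref{lem:commute-xi-kohn}, every Jacobi field splits into pieces of fixed weight $-m$, so I would compute $\dim_\R\ker Q_m$ for each $m\in\mathbb{Z}$ and add them up. The target is
\[
3n^2+5n=(2n^2+4n)+(n^2+n),
\]
where the first term comes from $m=0$ and the second from $m=2$. Every other weight should contribute nothing.

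\textbf{Weights $m\neq 0,2$.} For $m<0$, the form is $Q_m(s)=2\int|\bar\p s|^2+|m|(2n+|m|)\int|s|^2$, which is positive definite, so there is no kernel. For $m=1$, write $s=s_0+s_1$ with $s_0\in H^0(\mathcal E(1))$ and $s_1\perp H^0(\mathcal E(1))$. Then $Q_1(s_0)<0$ by \eqref{eq:23}, while Proposition~\ref{propD.8} gives $Q_1(s_1)\ge(4n-2n+1)\int|s_1|^2>0$. The two pieces are $Q_1$-orthogonal, since $\bar\p s_0=0$. Hence $Q_1$ is nondegenerate and contributes no nullity. For $m\ge3$, \eqref{eq:21.1} gives $Q_m\ge (m-2)(m+2n-2)\int|s|^2$ with a strictly positive constant, so again there is no kernel.

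\textbf{Weight $m=0$.} Here $Q_0(s)=2\int|\bar\p s|^2$, so the kernel is $H^0(\mathcal E(0))=H^0(h^*T^{1,0}\CP^n)$. Since $h$ is a biholomorphism, this space is isomorphic to $H^0(T^{1,0}\CP^n)\cong\mathfrak{sl}(n+1,\C)$, which has real dimension $2(n+1)^2-2=2n^2+4n$. This agrees with the lower bound in Theorem~\ref{thmC.1} for $d=1$, where the $E_{-2n}$ term vanishes.

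\textbf{Weight $m=2$.} This is the main step and the main obstacle. Here $Q_2(s)=2\int|\bar\p s|^2-4(n-1)\int|s|^2$, so the kernel consists exactly of the sections that achieve equality in \eqref{eq:14.9}.

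For the lower bound, I would take the Killing fields of $\S^{2n+1}$ lying in $\mathfrak{so}(2n+2)\ominus\mathfrak u(n+1)$, that is, $x\mapsto Bx$ with $B$ real-skew and anticommuting with $\mathbf j$. Since $u\circ\Xi_t$ is harmonic for every $t$, each $\rd u(Bx)$ is a Jacobi field. These fields are antilinear in $z$, so they have weight $-2$. They form a space of real dimension $n(n+1)$, and they are nonzero by Lemma~\ref{lem2.1} applied along horizontal directions. Equivalently, in the picture of Lemma~\ref{lemD.6} they correspond to $W=p\w\mathbf b$ with $\mathbf b=\bar C\bar z$ for $C\in\Lambda^2$, a space of complex dimension $n(n+1)/2$.

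For the upper bound, I would trace equality through the proof of \eqref{eq:14.9}:
\begin{itemize}
\item Equality in \eqref{eq:21} forces $\p^{H_\ell}\ell(\mathbf b)=0$ for almost every hyperplane $H_\ell$.
\item Equality in \eqref{eq:42} forces the normal part of $\ell(\bar\p\mathbf b)$ to vanish.
\end{itemize}
I would first try to show that these two conditions force $\mathbf b$, modulo $\C p$, to be conjugate-linear in $z$ with skew coefficient matrix. The first condition says the restrictions of $\ell(\mathbf b)$ to hyperplanes are antiholomorphic sections of $\mathcal O(-1)$ on $\CP^{n-1}$. The plan is to patch these restrictions over all $\ell$ and show that $\mathbf b$ is antiholomorphic of degree one.

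The case $n=1$ needs separate treatment. There the bound in \eqref{eq:14.9} is $0$, so the kernel is $H^0(\mathcal E(2))$. Since $\mathcal E(2)=h^*T^{1,0}\CP^1\otimes\mathcal O(-2)$ is holomorphically trivial, this space has complex dimension $1$, which gives real dimension $2=n^2+n$.

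Adding the contributions, the nullity is $(2n^2+4n)+(n^2+n)=3n^2+5n$.
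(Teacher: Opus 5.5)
The gap is the upper bound on the weight-$2$ kernel for $n\ge 2$, and that bound is the whole content of the $m=2$ case.

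The rest of your proposal is sound. Your treatment of the weights $m\neq 2$ and of $m=0$ matches the paper. Your $n=1$ argument ($Q_2=2\int\abs{\bar\p s}^2$ and $\mathcal E(2)\cong\mathcal O$ holomorphically) is correct. The Killing fields $x\mapsto Bx$, $B\in\mathfrak{so}(2n+2)\ominus\mathfrak u(n+1)$, do give a valid lower bound $n^2+n$ for the weight-$2$ kernel.

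For the upper bound, you correctly identify $\ker Q_2$ with the sections for which $\p^{H_\ell}\ell(\mathbf b)=0$ and $N_{\ell,A}(\mathbf b)=0$ for a.e.\ $\ell$. You then only announce a plan to ``patch'' these conditions into global antiholomorphy of $\mathbf b$ with skew coefficients. That conversion is exactly what must be proved. The conditions are first-order conditions on the varying hyperplanes $H_{\ell,A}=\{\ell(Az)=0\}$. They concern a representative defined only modulo $\C p$, and they are weighted by the non-isometric factors $\abs{A^*\ell}^{-2}$ coming from $h$. You give no mechanism that turns them into a dimension count, so as written nothing excludes a kernel of complex dimension larger than $n(n+1)/2$.

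The paper closes this step with an exact global identity rather than by patching.
\begin{itemize}
\item Set $\mathbf b_0=A^{-1}\mathbf b$ and substitute $\ell\mapsto\Phi_A(\ell)=A^*\ell/\abs{A^*\ell}$. The factors $\abs{A^*\ell}^{\pm2}$ cancel, and the deficit $\int\abs{\bar\p s}^2-2(n-1)\int\abs{s}^2$ becomes $C(n)\int F\,\rd(\Phi_A)_*\sigma$ with $F\ge0$ built from $\mathbf b_0$ alone.
\item Since $(\Phi_A)_*\sigma$ and $\sigma$ are equivalent measures, $Q_2(s)=0$ if and only if the $A=I$ deficit of the image $s_0$ of $\mathbf b_0$ vanishes.
\item For $A=I$, apply the Bochner--Kodaira--Nakano identity \eqref{eq:20} on all of $\CP^n$ to $E=T^{1,0}\CP^n\otimes\mathcal O(-2)$, for which $[i\Theta(E),\Lambda]=2(n-1)$ on sections. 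This shows the deficit equals $\int\abs{\p s_0}^2$.
\item Hence $\ker Q_2$ is isomorphic to the antiholomorphic sections of $T^{1,0}\CP^n\otimes\mathcal O(-2)$. By metric duality these correspond to $H^0(\Omega^{1,0}\CP^n\otimes\mathcal O(2))$.
\item The dual Euler sequence identifies this space with the kernel of $(\C^{n+1})^*\otimes(\C^{n+1})^*\to{\rm Sym}^2(\C^{n+1})^*$, i.e.\ $\Lambda^2(\C^{n+1})^*$, of complex dimension $n(n+1)/2$.
\end{itemize}
This yields both bounds at once.

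Two small fixes to your lower bound:
\begin{itemize}
\item Lemma~\ref{lem2.1} does not cover Killing fields (for example $\rd u(\xi)=0$). Argue instead that $Bz=C\bar z$ with $C^T=-C$ satisfies $\bar z^TC\bar z=0$, so it is horizontal, and $\rd h\circ\rd\pi$ is injective on horizontal vectors.
\item For $h([z])=[Az]$ the corresponding representative is $\mathbf b=AC\bar z$ (so $\mathbf b_0=C\bar z$), not $\bar C\bar z$.
\end{itemize}
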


\begin{proof} When $m=1$, 
$Q_1$ is indefinite in $\Gamma (\mathcal{E}(1)) $. We have a  decomposition $\Gamma(\mathcal{E}(1))= H^0(\mathcal{E}(1))\oplus H^0(\mathcal{E}(1))^\perp$. In Subsection   \ref{subsection5.5}, we proved $Q_1= -(2n-1)\norm{\cdot}^2$ on $H^0$ and $Q_1 \geq (2n+1)\norm{\cdot}^2$ on $(H^0)^\perp$. Moreover, by \eqref{eq:10.1} and \eqref{eq:21.1} we see that the only possible cases for the nullity are $m=0$ and $m=2$.

\smallskip 
\noindent\textbf{Case $m=0$. } In this case, \eqref{eq:10.1} becomes
\eq{
    Q_0(s) = 2\int_{\CP^n}\abs{\bar{\p}s}^2.
}
Hence
\eq{
    Q_0(s)=0 \iff \bar{\p}s=0.
}
It is well known that the space of holomorphic sections of $T^{1,0}\CP^n$ has complex dimension $n^2+2n$, hence
\eq{
    \dim_\C H^0(h^*T^{1,0}\CP^n) = \dim_\C H^0(T^{1,0}\CP^n) = n^2+2n.
}
Therefore, the case $m=0$ provides nullity $2n^2+4n$.

\smallskip 

\noindent\textbf{Case $m=2$. } In this case, \eqref{eq:10.1} becomes
\eq{\label{eq:39.9}
    Q_2(s) = 2\int_{\CP^n} \Big( \abs{\bar{\p}s}^2 - (2n-2)\abs{s}^2 \Big).
}
Recall that
\eq{
    H_{\ell}=H_{\ell,A}=\{[z]\mid\ell(Az)=0\}.
}
Collecting the deficits in \eqref{eq:19.1} and \eqref{eq:21} gives
\eq{\label{eq:40}
    &\int_{\CP^n}\abs{\bar{\p}s}^2 - 2(n-1)\int_{\CP^n}\abs{s}^2 \\
    &= C(n)\int_{S(\C^{n+1})^*}\abs{A^*\ell}^{-2}\Big( \int_{H_{\ell,A}} \abs{N_{\ell,A}(\mathbf{b})}^2 \,\rdV_{H_{\ell,A}} + \int_{H_{\ell,A}}\abs{\p^{H_{\ell,A}}\ell(\mathbf{b})}^2 \,\rdV_{H_{\ell,A}} \Big)\rd\sigma(\ell),
}
where $N_{\ell,A}(\mathbf{b})$ is the normal component of $\ell(\bar{\p}\mathbf{b})$ along $H_{\ell,A}$ (i.e. the deficit in \eqref{eq:42}).
Recall that $s\in\Gamma(h^*T^{1,0}\CP^n\otimes\mathcal{O}(-2))$ is the image of $\mathbf{b}\in\Gamma(\underline{\C}^{n+1}\otimes\mathcal{O}(-1))$ in \eqref{eq:12}. Set
\eq{
    \mathbf{b}_0\coloneqq A^{-1}\mathbf{b}, \qquad \Phi_A:S(\C^{n+1})^*\to S(\C^{n+1})^*, \quad \Phi_A(\ell) = \frac{A^*\ell}{\abs{A^*\ell}} = \ell_0.
}
Then $H_{\ell,A}=H_{\Phi_A(\ell),I}$. Hence
\eq{
    \ell(\mathbf{b}) = (A^*\ell)(\mathbf{b}_0) = \abs{A^*\ell}\,\Phi_A(\ell)(\mathbf{b}_0),
}
and
\eq{\label{eq:41}
    \ell(\bar{\p}\mathbf{b}) = \abs{A^*\ell}\,\Phi_A(\ell)(\bar{\p}\mathbf{b}_0), \qquad \p^{H_{\ell,A}}\ell(\mathbf{b}) = \abs{A^*\ell}\,\p^{H_{\Phi_A(\ell),I}}\Phi_A(\ell)(\mathbf{b}_0).
}
Using
\eq{
    \abs{\ell(\bar{\p}\mathbf{b})}^2 = \abs{\bar{\p}^{H_{\ell,A}}\ell(\mathbf{b})}^2 + \abs{N_{\ell,A}(\mathbf{b})}^2,
}
we have
\eq{\label{eq:41.5}
    \abs{N_{\ell,A}(\mathbf{b})}^2 = \abs{A^*\ell}^2 \abs{N_{\Phi_A(\ell),I}(\mathbf{b}_0)}^2.
}
For simplicity, we denote
\eq{\label{eq:43}
    F(\ell) \coloneqq \int_{H_{\ell,I}} \abs{N_{\ell,I}(\mathbf{b}_0)}^2 \,\rdV_{H_{\ell,I}} + \int_{H_{\ell,I}}\abs{\p^{H_{\ell,I}}\ell(\mathbf{b}_0)}^2 \,\rdV_{H_{\ell,I}}.
}
Combining \eqref{eq:40}, \eqref{eq:41}, \eqref{eq:41.5}, and \eqref{eq:43} gives
\eq{
    \int_{\CP^n}\abs{\bar{\p}s}^2 - 2(n-1)\int_{\CP^n}\abs{s}^2 = C(n)\int_{S(\C^{n+1})^*} F(\Phi_A(\ell)) \,\rd\sigma(\ell) = C(n)\int_{S(\C^{n+1})^*} F(\ell) \,\rd\sigma_A(\ell),
}
where $\rd\sigma_A=(\Phi_A)_*(\rd\sigma)$. Since $\Phi_A:S(\C^{n+1})^*\to S(\C^{n+1})^*$ is a diffeomorphism, we see that $\rd\sigma$ and $\rd\sigma_A$ are equivalent measures. Hence by \eqref{eq:39.9} we have
\eq{
    Q_2(s)=0 \iff \int_{S(\C^{n+1})^*} F(\ell) \,\rd\sigma_A(\ell)=0 \iff \int_{S(\C^{n+1})^*} F(\ell) \,\rd\sigma(\ell)=0.
}
Note that the last statement is independent of $A$. Let $s_0\in\Gamma(T^{1,0}\CP^n\otimes\mathcal{O}(-2))$ be the image of $\mathbf{b}_0$. Choosing $A=I$ in the above discussion, or directly using \eqref{eq:20}, we have
\eq{
    C(n)\int_{S(\C^{n+1})^*} F(\ell) \,\rd\sigma(\ell) = \int_{\CP^n}\abs{\bar{\p}s_0}^2 - 2(n-1)\int_{\CP^n}\abs{s_0}^2 = \int_{\CP^n} \abs{\p s_0}^2.
}
Hence $Q_2(s)=0$ if and only if $s_0$ is an anti-holomorphic section of $T^{1,0}\CP^n\otimes\mathcal{O}(-2)$. Since the duality exchanges $(1,0)$ type and $(0,1)$ type, namely
\eq{
    \bar{\p}(s_0^\flat) = (\p s_0)^\flat,
}
we have
\eq{
    \{s_0\mid\p s_0=0\} \cong H^0((T^{1,0}\CP^n\otimes\mathcal{O}(-2))^*) = H^0(\Omega^{1,0}\CP^n\otimes\mathcal{O}(2)).
}
Choosing $m=2$ in \eqref{eq:11} and taking duality gives
\eq{
    0 \to \Omega^{1,0}\CP^n\otimes\mathcal{O}(2) \to (\underline{\C}^{n+1})^*\otimes\mathcal{O}(1) \to \mathcal{O}(2) \to 0.
}
Note that
\eq{
    H^0(\mathcal{O}(1)) = (\underline{\C}^{n+1})^*, \qquad H^0(\mathcal{O}(2)) = {\rm Sym}^2(\underline{\C}^{n+1})^*.
}
Since
\eq{
    (\underline{\C}^{n+1})^*\otimes(\underline{\C}^{n+1})^* = {\rm Sym}^2(\underline{\C}^{n+1})^* \oplus \Lambda^2(\underline{\C}^{n+1})^*,
}
we see that 
\eq{
    H^0(\Omega^{1,0}\CP^n\otimes\mathcal{O}(2)) \cong \Lambda^2(\underline{\C}^{n+1})^*,
}
that is, the space of anti-symmetric complex $(n+1)\times(n+1)$ matrices, which has complex dimension
\eq{
    \dim_\C\{C\in M_{n+1}(\C) \mid C^T+C=0 \} = \frac{1}{2}(n^2+n).
}
Hence the case $m=2$ provides nullity $n^2+n$.

Altogether we have
\eq{
    {\rm nul}(h\circ\pi) = 2n^2+4n + n^2+n = 3n^2+5n.
}

\end{proof}

\appendix

\section{The Sasakian structure and the Hopf map}\label{appendix_A}

In this Appendix, for the convenience of the reader, we recall the Sasakian structure on $\S^{2n+1}$, the Hopf map, and holomorphic maps from $\mathbb{CP}^n$ into itself. 

We recall that for an odd-dimensional manifold $(M,g)$, a \textit{Sasakian structure} $(\xi,\Phi)$, consisting of a vector field $\xi$ and a $(1,1)$-tensor field $\Phi$, is defined by
\eq{
    \Phi^2(X) = -X+g(X,\xi)\xi,\quad g(\Phi(X),\Phi(Y)) = g(X,Y)-g(X,\xi)g(Y,\xi), \quad\forall X,Y\in\Gamma(TM)
}
and
\eq{\label{def_Sasakian}
    (\nabla_X\Phi)(Y) = -g(X,Y)\xi + g(Y,\xi)X, \quad\forall X,Y\in\Gamma(TM).
}
Here $\xi$ is called the \textit{Reeb field}, and $\Phi=\nabla\xi$.

In particular, the standard sphere $\S^{2n+1}\subset\mathbb{C}^{n+1}$ has a standard Sasakian structure $(\xi,\Phi)$ defined by
\eq{
    \xi \coloneqq \mathbf{j}x, \quad \Phi(X) \coloneqq \mathbf{j}X + \<X,\xi\>x, \quad\forall X\in\Gamma(T\S^{2n+1}),
}
where $\mathbf{j}$ is the standard complex structure on $\mathbb{C}^{n+1}$. In fact, $\Phi$ is just the projection of $\mathbf{j}$ onto the tangent space of $\S^{2n+1}$, since
\eq{
    \Phi(X) = \mathbf{j}X + \<X,\mathbf{j}x\>x = \mathbf{j}X - \<\,\mathbf{j}X,x\>x.
}

We view the sphere as
\eq{
    \S^{2n+1} = \{z=(z_0,z_1,\cdots,z_n)\in\mathbb{C}^{n+1}\,:\, \abs{z}=1\}.
}
The complex projective space $\mathbb{CP}^n$ is endowed with the Fubini-Study metric
\eq{
    g_{i\bar{j}} = \frac{(1+\abs{z}^2)\delta_{i\bar{j}} - \bar{z}_iz_j}{(1+\abs{z}^2)^2}.
}
The standard \textit{Hopf map} (or \textit{Hopf fibration}) $\pi:\S^{2n+1}\to\mathbb{CP}^n$ is defined by
\eq{
    \pi(z) \coloneqq [z_0:z_1:\cdots:z_n].
}
It is a smooth Riemannian submersion, and is a harmonic map.

By definition, the Hopf map $\pi$ projects every complex line $\mathbb{C}z$ to $[z]$. Therefore $\pi$ can be constructed as the quotient map of a principal bundle
\eq{
    \S^1 \rightarrow \S^{2n+1} \xrightarrow{\pi} \mathbb{CP}^n,
}
where $\S^1$ freely acts on $\S^{2n+1}$ by
\eq{
    e^{i\theta}\cdot z \coloneqq e^{i\theta}z.
}
Each $\S^1$-fiber of the Hopf map $\pi$ is the orbit of the flow induced by the Reeb field $\xi$. The horizontal subbundle is defined by $\xi^\perp\subset T\S^{2n+1}$. The Hopf map $\pi$ is horizontally holomorphic (or transversely holomorphic) in the sense that
\eq{
    J \circ \rd\pi = \rd\pi \circ \Phi,
}
where $J$ is the complex structure on $\mathbb{CP}^n$.

A \textit{holomorphic map} between complex manifolds is a map $h:(M, J^M)\to (N, J^N)$ that satisfies
\eq{
    J^N \circ \rd h = \rd h \circ J^M.
}
All holomorphic maps from $\mathbb{CP}^n$ into itself are characterized by
\eq{
    h([z_0:z_1:\cdots:z_n]) = [p_0(z):p_1(z):\cdots:p_n(z)],
}
where $p_0,p_1,\cdots,p_n$ are homogeneous polynomials with the same degree, and have no common zeros except $0$. For example, for any $d\in\mathbb{N}$,
\eq{
    h([z_0:z_1:\cdots:z_n]) = [z_0^d:z_1^d:\cdots:z_n^d]
}
is a holomorphic map from $\mathbb{CP}^n$ into itself.

\section{An example}\label{appendix_B}

We give an example to show that the composition of the Hopf map $\pi:\S^{2n+1}\to\mathbb{CP}^n$ and a holomorphic map from $\mathbb{CP}^n$ to itself is not necessarily horizontally weakly conformal, i.e.
\eq{
    (\rd u)(\rd u)^* = \lambda(x)^2 \,\rid
}
is not necessarily true when $n\geq2$. 

We consider $n=2$. Let $\pi:\S^5\to\mathbb{CP}^2$ be the Hopf map and $h:\mathbb{CP}^2\to\mathbb{CP}^2$ be a holomorphic map with algebraic degree 2 defined by
\eq{
    h([Z_0,Z_1,Z_2]) \coloneqq [Z_0^2,Z_1^2,Z_2^2],
}
and set
\eq{
    u \coloneqq h \circ \pi : \S^5\to\mathbb{CP}^2.
}
In the chart $\{Z_0\not=0\}$, we set
\eq{
    z_1 \coloneqq \frac{Z_1}{Z_0}, \quad z_2 \coloneqq \frac{Z_2}{Z_0},
}
then $h$ has a simple expression
\eq{
    h(z_1,z_2) = (z_1^2,z_2^2),
}
hence
\eq{
    \rd h|_{(z_1,z_2)} = \begin{pmatrix}
        2z_1 & 0 \\
        0 & 2z_2
    \end{pmatrix}.
}
Choose a small enough $\epsilon>0$ and consider
\eq{
    p \coloneqq [1:\epsilon:2\epsilon] = (\epsilon, 2\epsilon) \in\mathbb{CP}^2.
}
It follows
\eq{
    \rd h|_p(\p_{z_1}) = 2\epsilon\, \p_{z_1}, \quad \rd h|_p(\p_{z_2}) = 4\epsilon\, \p_{z_2}.
}
Since the Hopf map $\pi$ is horizontally surjective, there exists $x\in\S^5$ such that $\pi(x)=p$ and horizontal vectors $X_1,X_2\in T_x\S^5$ such that
\eq{
    \rd\pi|_x(X_1) = \frac{\p_{z_1}}{\abs{\p_{z_1}}}, \quad \rd\pi|_x(X_2) = \frac{\p_{z_2}}{\abs{\p_{z_2}}}.
}
Note that the Fubini-Study metric
\eq{
    g_{i\bar{j}} = \frac{(1+\abs{z}^2)\delta_{i\bar{j}} - \bar{z}_iz_j}{(1+\abs{z}^2)^2}
}
has the approximation
\eq{
    g(z) = \delta + O(\abs{z}^2) \qquad\hbox{as}\quad z\to0.
}
Hence
\eq{
    \abs{\rd u_x(X_1)} = \abs{\rd h_p\Big(\frac{\p_{z_1}}{\abs{\p_{z_1}}}\Big)} = 2\epsilon + O(\epsilon^3),
}
and similarly
\eq{
    \abs{\rd u_x(X_2)} = \abs{\rd h_p\Big(\frac{\p_{z_2}}{\abs{\p_{z_2}}}\Big)} = 4\epsilon + O(\epsilon^3).
}
Therefore $u$ is not horizontally weakly conformal. It is clear that the same reason holds for general $n\geq2$ that $h \circ \pi :\S^{2n+1}\to\mathbb{CP}^n$ is not necessarily horizontally weakly conformal.

\bigskip
\noindent{\sc Acknowledgment.}
The main part of the work, especially Theorem \ref{Theorem_R}, was reported by G. W. in a workshop in Wuhan in April, 2026. 
M. Z. would like to thank Linlin Sun for stimulating discussion about Theorem \ref{thm_nullity} there.

\bibliographystyle{alpha}
\bibliography{BibTemplate.bib}

\end{document}